\newtheorem{thm}{Theorem}[section]
\newtheorem{defin}{Definition}[section]
\newtheorem{prop}{Proposition}[section]
\newtheorem{lemma}{Lemma}[section]
\newtheorem{cor}{Corollary}[section]
\newcommand{\Mbar}{\overline {M}}
\newcommand{\Sbar}{\overline {S}}
\newcommand{\Ric}{\text{Ric}}
\newcommand{\Ker}{\text{Ker}}
\newcommand{\jbar}{\bar {j}}
\newcommand{\vbar}{\bar {v}}
\newcommand{\zbar}{\bar {z}}
\newcommand{\lbar}{\bar {l}}
\newcommand{\nbar}{\bar {n}}
\newcommand{\kahler}{K\"ahler     }
\newcommand{\KE}{K\"ahler-Einstein     }
\newcommand{\qed}{\hfill \fbox{}}
\def\NN {{\mathbb N}}
\def\RR {{\mathbb R}}
\def\CC {{\mathbb C}}
\def\Om{\Omega}
\def\la{\lambda}
\def\de{\delta}
\def\om{\omega}
\def\ga{\gamma}
\def\ve{\varepsilon}
    \def\cU{{\cal U}}
\newenvironment{proof}{\medskip
\noindent{\bf Proof: }}{{\hfill$\square$}{\medskip}}
\title{On the Asymptotic Expansion of Complete Ricci-flat \kahler metrics on
quasi-projective manifolds}
\author{Bianca Santoro}
\date{}
\begin{document}
\maketitle

\begin{abstract}
In this work, we describe the asymptotic behavior of complete metrics
with prescribed Ricci curvature on open \kahler manifolds that can 
be compactified by the addition of a smooth and ample divisor. First, we
construct a explicit sequence of \kahler metrics with special
approximating properties. 
Using those metrics as starting point, we are able to work out 
the asymptotic behavior of the solutions
given in \cite{TY1}, in particular obtaining their full asymptotic
expansion.
\end{abstract}

\section{Introduction}
\label{s.introduction}

In 1978, Yau \cite{Y1} proved the Calabi Conjecture by showing the
existence and uniqueness of \kahler metrics with prescribed Ricci curvature 
on compact complex manifolds.

Following this work, Tian and Yau \cite{TY1} settled the non-compact version
of Calabi's Conjecture on quasi-projective manifolds that can be compactified
by adding a smooth, ample divisor. In a subsequent work (\cite{TY2}),
their result was extended to the case where the divisor has 
multiplicity greater than one and orbifold-type singularities.
This generalization was independently done by Bando \cite{Bando} and
Kobayashi \cite{K}.

Once the existence problem is solved, an interesting question that arises 
concerns the behavior of these complete metrics near the divisor. 
This question is also addressed to by Tian and Yau (\cite{TY1}).

In a subsequent work, Tian and Yau (\cite{TY2}) obtain existence results, as
well as the asymptotics of their solution, for the
case in which the divisor has multiplicity strictly greater than one. 
These techniques, however, make an
essential use of the higher multiplicity of the divisor and thus they
do not work when this multiplicity equals one which can be thought of
as the generic case. In particular the asymptotics properties of the
solutions found in (\cite{TY1}) remained ignored.

The aim of this paper is to provide
an answer to this question, therefore refining the 
main result in \cite{TY1}. 
More precisely, we shall first construct a sequence
of complete \kahler metrics with special approximating properties 
on a quasi-projective manifold (in our case, the complement
of  a smooth, ample divisor on a compact complex manifold). Then by
using these approximating metrics, we are going to
study the solution of a complex 
Monge-Amp\`ere equation on the open manifold. A careful analysis 
of the complex Monge-Amp\`ere operator will allow us to describe
the asymptotic properties of the solution. As a matter of fact,
the reader will note that
our results apply equally well to divisors having orbifold-type singularities.

In a subsequent work, we expect to remove the smoothness assumption 
on the divisor so as to include divisors having normal crossings.

To state the main results of this paper, let us consider a 
compact, complex manifold $\Mbar$ of complex dimension $n$. Let
$D$ be an {\em admissible} divisor in  $\Mbar$, ie, a divisor satisfying the
following conditions:
\begin{itemize}
\item
Sing{ $\Mbar$}$\subset D$.
\item
$D$ is smooth in $\Mbar\setminus $ Sing{ $\Mbar$}.
\item
For every $x \in$ Sing{ $\Mbar$}, the corresponding
local uniformization $\Pi_x : \tilde{\cal U}_x \rightarrow
{\cal U}_x$, with $\tilde{\cal U}_x \in \CC^n$, is such that
$\Pi_x^{-1}(D)$ is smooth in $\tilde{\cal U}_x $.
\end{itemize}

Let $\Om$ be a  smooth, closed $(1,1)$-form in the 
cohomology class $c_1(K_{\Mbar}^{-1} \otimes L_D^{-1})$, where
$K_{\Mbar}$ stands for the Canonical Line Bundle of $\Mbar$, and
$L_D$ for the line bundle associated to $D$.
Let $S$ be a defining section of $D$ on $L_D$ 
and let $M$ be the open manifold $M = \Mbar \setminus D$.
Consider a hermitian metric $||.||$ on $L_D$.

Fefferman, in his paper \cite{Fef}, developed inductively an
\hbox {$n$-th} order approximation to a complete \KE metric on strictly pseudoconvex domains on 
$\mathbb{C}^n$ with smooth boundary, and he suggested that higher order approximations could be 
obtained by considering $\log$ terms in the formal expansion of the solution to a certain complex Monge-Amp\`ere 
equation. This idea was used by Lee and
Melrose in \cite{Mel}, where they constructed the full asymptotic expansion of the solution to the
Monge-Amp\`ere equation introduced by Fefferman.

Motivated by this work, we construct inductively a sequence of 
rescalings $||.||_{\phi_m} := e^{\phi_m/2} ||.||$ of a fixed hermitian metric $||.||$ on $L_D$,
which will be the main ingredient of the proof of the following result.

\begin{thm}
\label{t.approximatingmetrics}
Let $M$, $\Om$ and $D$ be as above. Then, for every $\ve >0$, there exists an
explicitly given complete
\kahler metric $g_\ve$ such that
\begin{equation}
\label{e.MAve}
\Ric(g_\ve) - \Om = \frac{\sqrt{-1}}{2\pi} \partial \bar\partial f_\ve 
\hspace{1.5cm} \text{on $M$,}
\end{equation}
where $f_\ve$ is a smooth function on $M$ such that all 
its covariant derivatives decay to the order of $O(||S||^\ve)$.
Furthermore, for any $k \geq 0$, 
the norm of the $k$-th covariant derivative of the Riemann curvature 
tensor $R(g_\ve)$ of the metric $g_\ve$ decays 
to the order of $O((-n \log ||S||^2)^{-\frac{k+2}{2n}})$. 

\end{thm}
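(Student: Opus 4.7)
The plan is to construct $g_\ve$ by a finite-step Fefferman-type inductive procedure (as in \cite{Fef} and \cite{Mel}), starting with an explicit Tian-Yau-style complete \kahler metric as the zeroth approximation and successively correcting it by rescaling the Hermitian metric on $L_D$ via the factors $e^{\phi_m/2}$, each $\phi_m$ being chosen so as to cancel the leading asymptotic term of the current error $\Ric(g)-\Om$.

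The reference metric is taken to be
\begin{equation*}
\om_0 \;=\; \Om + \frac{\sqrt{-1}}{2\pi}\, \partial\bar\partial \Big(n (-\log ||S||^2)^{(n+1)/n}\Big),
\end{equation*}
where $||.||$ is the given Hermitian metric on $L_D$, adjusted on $\Mbar$ if necessary so that the $(1,1)$-form above is positive near $D$. A direct local computation in coordinates adapted to $D$ shows that $\om_0$ is positive and complete on $M$, that the $\om_0$-distance to a fixed base point grows like $(-\log||S||^2)^{(n+1)/(2n)}$, and that the Riemann curvature and all its covariant derivatives decay at exactly the claimed rate $O((-n\log||S||^2)^{-(k+2)/(2n)})$. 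This decay is structural and will be preserved under all subsequent corrections of strictly lower order.

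To improve $\om_0$, rewrite equation (\ref{e.MAve}) locally as a Monge-Amp\`ere equation: if $\om$ has local potential $\psi$, then $\Ric(\om)-\Om = -\sqrt{-1}\partial\bar\partial\log(\det(\psi_{i\bar j})\cdot e^F)$ for an explicit smooth density $F$ depending on $\Om$, $||.||$ and $S$, so that $f$ is identified, modulo pluriharmonic additions, with $\log(\det(\psi_{i\bar j})e^F)$. Compute the residual $f_0$ associated with $\om_0$. After Taylor expanding in the transverse coordinate and using a local trivialization of $L_D$, $f_0$ admits a formal expansion whose terms carry definite powers of $||S||$ multiplied by bounded functions of $-\log ||S||^2$. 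Choose $\phi_1$ so that $||.||_{\phi_1} = e^{\phi_1/2}\,||.||$ produces a modified form $\om_1$ whose residual $f_1$ has strictly smaller leading order. Iterate: $\phi_m$ is constructed inductively to kill the leading term of $f_{m-1}$, yielding a sequence of complete \kahler metrics $g_m$ with residuals decaying at increasing rates. After $N=N(\ve,n)$ steps, one arrives at the pointwise bound $f_\ve = O(||S||^\ve)$.

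To upgrade this to decay of all covariant derivatives, observe that each $\om_m$ is a uniformly bounded perturbation of $\om_0$ in the quasi-isometric sense, and that the local $D$-adapted frames transform linearly under the rescaling of $||.||$; hence covariant derivatives with respect to $g_\ve$ are controlled by derivatives in the tangential directions along $D$ together with the radial logarithmic derivative $\partial/\partial\log||S||^2$. Standard interior Schauder-type estimates on geodesic balls of uniform $\om_0$-radius then propagate the pointwise $O(||S||^\ve)$ decay of $f_\ve$ to all of its covariant derivatives, and the curvature bounds of $\om_0$ persist through the finite chain of perturbations. The main obstacle is the combinatorial bookkeeping at the inductive step: one must verify that each leading term of $f_m$ can indeed be removed by a smooth, power-type correction $\phi_{m+1}$ (no logarithmic terms being needed within the first $\ve$ orders of decay, in contrast to \cite{Mel}), while simultaneously preserving positivity, completeness, and the uniform curvature decay of $\om_0$ at every stage.
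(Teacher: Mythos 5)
Your overall strategy—an inductive Fefferman/Lee--Melrose type correction of a reference metric built out of the potential $(-\log\|S\|^2)^{(n+1)/n}$, implemented by rescaling the Hermitian metric $\|.\|$ on $L_D$—is indeed the paper's strategy. But your proposal contains a genuine error at the heart of the inductive step.

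You assert that the leading term of the residual $f_m$ can always be killed by a \emph{smooth, power-type} correction $\phi_{m+1}$, explicitly claiming that ``no logarithmic terms are needed within the first $\ve$ orders of decay, in contrast to \cite{Mel}.'' This is false, and it is precisely the opposite of what the paper establishes. The linearization of the residual at each order $m+1$ involves the operator $\square_m + n(m+1) - 1 - 2(n+1)j$ acting on sections of $L_D^{-i}\otimes \overline{L}_D^{-j}$ over $D$ (Lemma~\ref{chato}), and this operator has, in general, a nontrivial kernel. On the component of $u_{m+1,\ell}$ lying in that kernel one cannot solve for a power-type $\theta_{ij}$: the obstruction must instead be absorbed by increasing the power of $(-\log\|S\|_m^2)$ in $\phi_{m+1}$. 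This is exactly the Fefferman--Lee--Melrose log-obstruction; the paper's Steps~1 and~2 in the proof of Proposition~\ref{t.inductive} are designed to first remove the kernel piece by introducing a higher $\log$-power and only then solve the remaining Laplacian equation. If you insist on power-type corrections only, the iteration stalls at the first order where the kernel is nonzero and you will not reach $O(\|S\|^\ve)$ for large $\ve$.

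A second, less central gap: your reference form is positive (and complete) only on a neighborhood of $D$, and you do not explain how to obtain a complete K\"ahler metric on all of $M=\Mbar\setminus D$ satisfying the required curvature decay. The paper handles this by a global gluing: the corrected neighborhood form $\om_k$ is combined with $C_\ve\,\tfrac{\sqrt{-1}}{2\pi}\partial\bar\partial\bigl(-\|S\|'\bigr)^{\ve}$ for a suitably positive Hermitian metric $\|.\|'$ on $L_D$, producing $\om_{g_\ve}$ positive on all of $M$ while perturbing the residual by a term of size $O(\|S\|^\ve)$; one then re-estimates the residual via $f = f_k - C_\ve\|S\|'^{(\ve-1)}\|D'S\|_{g_k}$. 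Without such a step, ``adjust $\|.\|$ on $\Mbar$'' is not enough, since positivity away from $D$ is not automatic. Finally, the decay of covariant derivatives of the curvature in the paper is obtained by a rescaling-to-a-bounded-geometry argument via the diffeomorphism induced by the normal exponential map (Lemma~\ref{l.derivcurv}), rather than by interior Schauder estimates; Schauder estimates bound derivatives of a solution to an elliptic equation, not a priori derivatives of a curvature tensor, so that step of your sketch would need to be reworked as well.
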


\noindent {\bf Remark}: In the above statement, it should be emphasized that
the metric in question is constructed inductively in an explicit
manner. In other words, this
result provides complete metrics that are ``approximate solutions''
to the Calabi problem, but that have the advantage of being
explicitly described.

\vspace{0.1cm}

So far,
there has been a large amount of work concerned with 
deriving asymptotic expansions for 
\KE metrics in different contexts: after Cheng and Yau \cite{CY1} 
proved existence and uniqueness of \KE metrics
on strictly pseudoconvex domains in $\mathbb{C}^n$ with smooth boundary
(in addition to results on 
the regularity of the solution), Lee and Melrose \cite{Mel} derived an 
asymptotic expansion for the Cheng-Yau solution, which completely determines
the form of the singularity and improves the regularity result of \cite{CY1}.
On the setting of quasi-projective manifolds, Cheng and Yau \cite{CY2} and 
Tian and Yau \cite{TY3} showed the existence of \KE metrics under certain conditions 
on the divisor, and Wu  developed the asymptotic expansion to the
Cheng-Yau metric on a quasi-projective manifold (also assuming some conditions
on the divisor), as the parallel part to the work 
of Lee and Melrose \cite{Mel}.

However the asymptotic description of  
complete \kahler  metrics with prescribed Ricci curvature, which are not of
\KE type, was still lacking, for example in the context of quasi-projective
manifolds considered in (\cite{TY1}). This description will be provided by our
results below.

In \cite{TY1}, the result of existence of a complete \kahler metric (in a given \kahler class) 
with prescribed Ricci curvature 
is achieved by solving the following complex Monge-Amp\`ere equation
\begin{equation}
\label{MA1}
\begin{cases}
\left(
\om + \frac{\sqrt{-1}}{2\pi}\partial\bar\partial u
\right)^n  = e^f \om^n, & \\
\om + \frac{\sqrt{-1}}{2\pi}\partial\bar\partial u >0,
& \text{$u \in C^\infty(M, \RR)$},
\end{cases}
\end{equation}
where $f$ is a given smooth function
satisfying the integrability 
condition
\begin{equation}
\label{intcondition1}
\int_M (e^f-1)w^n = 0.
\end{equation}

Our main result describes the asymptotic behavior of the solution to
(\ref{MA1}), by showing that the 
approximate metrics given in Theorem \ref{t.approximatingmetrics} are
asymptotically as close to the 
actual solution as possible.

We should point out here that the underlying analysis of the case 
we are considering, as well as the methods 
used in our work, differs fundamentally from Wu's work (\cite{DaminWu}). 
Namely, 
it is not trivial to show, for example, that the solution $u(x)$ 
to (\ref{MA1}) vanishes uniformly when $x$ approaches infinity. In the 
\KE context considered by Wu, it is not hard to see it directly from the 
defining equation (as observed by Cheng and Yau in \cite{CY1}).
Also, the $\log$-filtration of the Cheng-Yau H\"older ring considered by Wu 
is not preserved in our case.

\begin{thm}
\label{t.mainthm}
For each $\ve>0$, let $g_\ve$ and $f_\ve$ be given by Theorem \ref{t.approximatingmetrics}.

Consider the  problem
\begin{equation}
\label{MA1ve}
\begin{cases}
\left(
\om_{g_\ve} + \frac{\sqrt{-1}}{2\pi}\partial\bar\partial u_\ve
\right)^n  = e^{f_\ve} \om_{g_\ve}^n, & \\
\om_{g_\ve} + \frac{\sqrt{-1}}{2\pi}\partial\bar\partial u_\ve >0,
& \text{$u_\ve \in C^\infty(M, \RR)$}.
\end{cases}
\end{equation}

Then, there exists a  solution $u_\ve (x)$ which decays 
to the order of at least  
$O(||S||^\ve(x))$ for $x$ sufficiently close to $D$.

Moreover, the norm of the $k$-th covariant derivative $|\nabla^k u_\ve|_{g_\ve}$ 
of $u_\ve$ decays as $O(||S||^\ve(x) \rho_{g_\ve}^{-\frac{k+2}{n+1}})$,
where $\rho_{g_\ve}(x)$ denotes the distance, with respect to the metric
$g_\ve$, from a fixed point $x_0 \in M$ to 
$x$.

\end{thm}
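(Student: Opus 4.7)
The plan is to invoke the Tian--Yau existence theorem to produce $u_\ve$ and then upgrade its qualitative decay to the sharp rates stated. As a preliminary step, I observe that the given $f_\ve$ need not satisfy the integrability condition (\ref{intcondition1}) with respect to $\om_{g_\ve}$; I would first subtract an appropriate constant from $f_\ve$ (to be absorbed into $u_\ve$) to restore integrability, a modification that is harmless since $f_\ve = O(||S||^\ve)$. With this normalization, the existence result of \cite{TY1} applies to the data $(g_\ve, f_\ve)$---its hypotheses being satisfied because, by Theorem \ref{t.approximatingmetrics}, $g_\ve$ is at least as well behaved near $D$ as the canonical model used by Tian and Yau---and yields a smooth solution $u_\ve \in C^\infty(M,\RR)$ tending to zero at infinity.

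The core step is to prove the pointwise decay $u_\ve = O(||S||^\ve)$, for which I would use a maximum principle argument with barriers $\phi_\pm = \pm C ||S||^\ve$. Concavity of $\log\det$ turns (\ref{MA1ve}) into a one-sided inequality of the form
\[
\De_{g_\ve}(u_\ve - \phi_+) \geq f_\ve - \De_{g_\ve}\phi_+,
\]
and the explicit construction of $g_\ve$ in Theorem \ref{t.approximatingmetrics} permits a direct calculation of $\De_{g_\ve}||S||^\ve$ near $D$. This Laplacian is dominated by a positive multiple of $||S||^\ve$ whose coefficient grows to infinity at $D$. For $C$ chosen large enough, it overwhelms the forcing $f_\ve = O(||S||^\ve)$, so the maximum principle on the complete manifold---applicable since $u_\ve \to 0$ at infinity---precludes a positive interior maximum of $u_\ve - \phi_+$, and symmetrically for the lower barrier.

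With the $C^0$ decay in hand, the higher covariant derivative estimates follow from a rescaling argument exploiting the geometry controlled by Theorem \ref{t.approximatingmetrics}. At a point $x$ close to $D$, the curvature and injectivity radius bounds for $g_\ve$ permit a geodesic ball of radius proportional to $\rho_{g_\ve}(x)^{1/(n+1)}$ to be rescaled to unit size, on which (\ref{MA1ve}) becomes a uniformly elliptic complex Monge-Amp\`ere equation with bounded geometry and right-hand side of size $O(||S||^\ve)$. Yau's $C^2$ estimate, Calabi's $C^3$ estimate, and a Schauder bootstrap then furnish uniform $C^{k,\alpha}$ bounds for $u_\ve$ on the rescaled ball. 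Undoing the rescaling produces a factor of the inverse scaling radius per derivative, yielding precisely $O(||S||^\ve \, \rho_{g_\ve}^{-(k+2)/(n+1)})$.

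The hardest part will be the sharp $C^0$ decay. As emphasized in the introduction, neither is the log-filtration of the Cheng--Yau H\"older ring preserved in this setting, nor does the defining equation by itself force any decay on $u_\ve$ as in the K\"ahler--Einstein argument of Cheng--Yau \cite{CY1} exploited by Wu \cite{DaminWu}. Matching the forcing $f_\ve$ with a barrier of the same order $||S||^\ve$ becomes possible only through the refined properties of $g_\ve$ produced by Theorem \ref{t.approximatingmetrics}; without that improvement, the best decay one could hope to extract from a generic approximating metric would be strictly slower than that of $f_\ve$, and the sharpness of the theorem would be lost.
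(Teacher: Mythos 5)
Your overall strategy (normalize $f_\ve$ so that the integrability condition holds, invoke the Tian--Yau existence theorem, establish the $C^0$ decay by a comparison argument, and then bootstrap to derivative estimates by rescaling) matches the architecture of the paper's proof. However there are two genuine gaps, one of which is fatal as stated.

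\textbf{The barrier $\pm C\,\|S\|^\ve$ has the wrong sign.} You correctly observe that $\Delta_{g_\ve}\|S\|^\ve$ is a \emph{positive} multiple of $\|S\|^\ve$ with an unbounded coefficient near $D$: indeed the dominant term in $\frac{\sqrt{-1}}{2\pi}\partial\bar\partial\|S\|^\ve$ is
$\tfrac{\ve^2}{4}\|S\|^\ve\,\frac{\sqrt{-1}}{2\pi}\,\partial\log\|S\|^2\wedge\bar\partial\log\|S\|^2$, whose trace against $g_\ve$ blows up like $(-\log\|S\|^2)^{(n-1)/n}$. But this positivity is exactly the wrong sign for the inequality you need. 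To rule out a positive interior maximum of $u_\ve - \phi_+$ via the concavity inequality
$\Delta_{g_\ve}(u_\ve - \phi_+) \geq f_\ve - \log\frac{(\om_{g_\ve}+\partial\bar\partial\phi_+)^n}{\om_{g_\ve}^n}$, one needs the right-hand side to be nonnegative, i.e.\ roughly $\Delta\phi_+ \leq f_\ve$. Since $\Delta\phi_+ \sim C\|S\|^\ve(-\log\|S\|^2)^{(n-1)/n}$ grows relative to $f_\ve = O(\|S\|^\ve)$ while $f_\ve$ can be negative, this fails no matter how large $C$ is. Equivalently: $\|S\|^\ve$ is plurisubharmonic near $D$, hence a \emph{sub}solution rather than a \emph{super}solution of the Monge--Amp\`ere equation, so $\phi_+=C\|S\|^\ve$ cannot serve as an upper barrier (and $\phi_-=-C\|S\|^\ve$ fails symmetrically as a lower barrier). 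The paper circumvents this by using the very different barriers
$C\big(S^{m+2}\Sbar^{-1}\theta + \Sbar^{m+2}S^{-1}\bar\theta\big)(-n\log\|S\|_m^2)^k$,
for a positive section $\theta$; Lemma~\ref{l.barrier} shows that the leading term of the Monge--Amp\`ere ratio is governed by the product $ij$ of the holomorphic/antiholomorphic weights, and the choice $i=m+2$, $j=-1$ makes $ij=-(m+2)<0$, which reverses the sign of the effective Laplacian and makes the comparison principle applicable (Proposition~\ref{p.uvanishesalot}). The antiholomorphic weight $j=-1$ is the essential device your barrier misses, and the construction of a globally defined positive representative $S^i\Sbar^j\theta_{ij}+\text{c.c.}$ out of a section of $L_D^{-i}\otimes\bar L_D^{-j}$ is itself nontrivial.

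\textbf{The statement that Tian--Yau directly yields $u_\ve\to 0$ at $D$ is unjustified.} Theorem~\ref{t.TY1} produces a \emph{bounded} smooth solution, not one that tends to zero at the divisor, and the introduction of the paper explicitly warns that the uniform vanishing of $u$ is nontrivial in this (non-K\"ahler--Einstein) setting and cannot be read off from the defining equation as in Cheng--Yau. The paper therefore proves $u_m\to 0$ at $D$ separately (Proposition~\ref{p.uvanishes}), applying a \emph{uniform} barrier of the type above to each member of the approximating family $u_{m,\ve}$ solving $(\om_m+\partial\bar\partial u)^n=e^{f_m+\ve u}\om_m^n$ (for which vanishing at infinity is known), and then passing to the limit. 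Without this step and without the correct barrier, the maximum-principle argument you sketch has nothing to close itself against. The derivative estimates you outline via rescaled balls are in the same spirit as the paper's Moser-iteration bootstrap (Proposition~\ref{p.derivum}), but they require the $C^0$ decay as input, so they inherit both of these gaps.
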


This theorem has an important, straightforward corollary.
\begin{cor}
\label{c.RicciFlat}
Let $\Mbar$ be a compact \kahler manifold of complex dimension $n$, and let $D$ be
a smooth anti-canonical divisor. Then for any $\ve>0$, there
exists a complete Ricci-flat \kahler metric 
on $M = \Mbar \setminus D$ that can be described as 
$$
\hat{\om} = \om_{g_\ve} + \om_{u_\ve},
$$
where $g_\ve$ is the \kahler metric constructed in Theorem \ref{t.approximatingmetrics}, 
and $\om_{u_\ve}$ is a \kahler form that 
decays at least to the order of $ O(||S||^{\ve})$ when $x$ approaches the divisor.
Therefore, $\om_{g_\ve}$ provides the asymptotics of the Ricci-flat metric $\hat{\om}$. 
\end{cor}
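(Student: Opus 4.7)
The plan is to apply Theorem \ref{t.mainthm} in the special setting where $D$ is anti-canonical, for which the reference form may be chosen to vanish and the Monge-Amp\`ere solution then automatically produces a Ricci-flat metric. Since $D$ is anti-canonical, $L_D \simeq K_{\Mbar}^{-1}$, so $K_{\Mbar}^{-1}\otimes L_D^{-1}$ is trivial and $c_1(K_{\Mbar}^{-1}\otimes L_D^{-1})=0$; one may therefore select $\Om \equiv 0$ as the closed $(1,1)$-form representing this class in the hypotheses of Theorems \ref{t.approximatingmetrics} and \ref{t.mainthm}. With this choice, Theorem \ref{t.approximatingmetrics} supplies an explicit complete \kahler metric $g_\ve$ on $M$ satisfying
$$
\Ric(g_\ve) = \frac{\sqrt{-1}}{2\pi}\,\partial\bar\partial f_\ve,
$$
where $f_\ve$ and all its covariant derivatives decay like $O(||S||^\ve)$.

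Next, I would invoke Theorem \ref{t.mainthm} to obtain a smooth solution $u_\ve$ of \eqref{MA1ve} with $u_\ve = O(||S||^\ve)$. Define $\hat{\om} := \om_{g_\ve} + \om_{u_\ve}$, where $\om_{u_\ve} := \frac{\sqrt{-1}}{2\pi}\partial\bar\partial u_\ve$; the positivity of $\hat{\om}$ is part of the conclusion of Theorem \ref{t.mainthm}. Taking the logarithm of the identity $\hat{\om}^n = e^{f_\ve}\om_{g_\ve}^n$ and applying $-\frac{\sqrt{-1}}{2\pi}\partial\bar\partial$ yields
$$
\Ric(\hat{\om}) = \Ric(g_\ve) - \frac{\sqrt{-1}}{2\pi}\partial\bar\partial f_\ve = 0,
$$
so $\hat{\om}$ is Ricci-flat. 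The decay of $\om_{u_\ve}$ follows from the $k=2$ estimate in Theorem \ref{t.mainthm}, namely $|\nabla^2 u_\ve|_{g_\ve} = O(||S||^\ve \, \rho_{g_\ve}^{-4/(n+1)})$, which is in particular $O(||S||^\ve)$ because $\rho_{g_\ve}(x)\to\infty$ as $x\to D$. Completeness of $\hat{\om}$ is inherited from $g_\ve$, since $\om_{u_\ve}$ tends to zero at infinity and hence $\hat{\om}$ and $\om_{g_\ve}$ are uniformly comparable outside a compact set.

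Because all the substantive analytic work has been packaged into Theorems \ref{t.approximatingmetrics} and \ref{t.mainthm}, there is no serious obstacle here; the only care needed is to verify that the specific $f_\ve$ produced by Theorem \ref{t.approximatingmetrics} is compatible with the integrability condition \eqref{intcondition1} required to invoke Theorem \ref{t.mainthm} (possibly after an additive adjustment of $f_\ve$, which alters $u_\ve$ only by an additive constant and so affects neither $\om_{u_\ve}$ nor the Ricci identity above), and to keep the sign and normalization conventions consistent throughout the final computation of $\Ric(\hat{\om})$.
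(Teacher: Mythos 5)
Your proof is correct and follows exactly the route the paper intends (which is why the paper labels the corollary ``straightforward'' and omits a written proof): since $D$ is anticanonical, $c_1(K_{\Mbar}^{-1}\otimes L_D^{-1})=0$, so one chooses $\Om\equiv 0$, applies Theorem~\ref{t.approximatingmetrics} to get $g_\ve$ and $f_\ve$, applies Theorem~\ref{t.mainthm} to get $u_\ve$, and reads off $\Ric(\hat\om)=0$ from the Monge--Amp\`ere identity; the decay of $\om_{u_\ve}$ follows from the $k=2$ derivative estimate, and completeness/equivalence to $g_\ve$ is already contained in the Tian--Yau existence theorem. Your side remark about the integrability normalization is also correctly placed, and is the content of the lemma preceding Proposition~\ref{p.uvanishes}.
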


The structure of the paper is as follows.
In Section \ref{s.approx}, we construct inductively a sequence of hermitian metrics 
$\{ ||.||_m \}_{m \in \NN}$ on $L_D$ such that the closed $(1,1)$-form 
\begin{equation}
\label{omegam}
\om_m = \frac{\sqrt{-1}}{2\pi} \frac{n^{1 + 1/n}}{n+1} \partial \bar{\partial}
(-\log||S||_m^2)^{ \frac{n+1}{n}}
\end{equation}
is positive definite on a tubular neighborhood $V_m$ of $D$ in $\Mbar$.

The \kahler form $\om_m$ defines a \kahler metric $g_m$ on $V_m$ such that 
$\Ric(g_m) - \Om = \frac{\sqrt{-1}}{2\pi} \partial \bar \partial f_m$, 
for a smooth function $f_m$ on $M$ that decays to the order of $||S||^m$.
An important technical result for this construction is Lemma~\ref{chato},
whose proof is the object of Section~\ref{s.proofchato}.

In Section~\ref{s.whole}, we use the constructions of Section \ref{s.approx}
to complete the proof of Theorem \ref{t.approximatingmetrics}. First, we shall
obtain the necessary estimates on the decay of the
Riemann curvature tensor of the metrics $g_m$. Then we shall proceed to the
construction of approximating metrics that are defined on the whole manifold
(and not only on a neighborhood of the divisor at infinity).

Finally, Section \ref{s.MA} is devoted to the asymptotic study of the Monge-Amp\`ere 
equation (\ref{MA1}). By using the maximum principle for the complex 
Monge-Amp\`ere operator, along with the construction of a suitable barrier, we
shall complete the proof of Theorem \ref{t.mainthm}.

\noindent
{\bf Acknowledgements:}
I would like to thank my advisor, Prof. Gang Tian, for all the encouragement during 
my Ph.D. years at MIT, and for suggesting to me 
this topic of research, 
and to Prof. Julio Rebelo, for the careful reading of many preliminary versions of this
work.

\section{Approximating \kahler metrics}
\label{s.approx}
Let $\Mbar$ be a compact \kahler manifold of complex dimension $n$, and let $D$ be
%a smooth divisor in $\Mbar$.
an admissible divisor in  $\Mbar$.

The divisor $D$  induces  a line bundle $L_D$ on  $\Mbar$.
We will assume that the restriction of $L_D$ to $D$ is ample, so that 
there exists an orbifold hermitian metric $||.||$ on $L_D$ such that its curvature form 
$\tilde\om$ is positive definite along $D$.  
  
Consider a closed $(1,1)$-form $\Om$ in the Chern class $c_1(-K_{\Mbar} - L_D)$. 
The  
goal of this section  is to construct a complete k\"ahler metric $g$ such that

\begin{equation}
\label{problem}
\text{Ric}(g) - \Om = \frac{\sqrt{-1}}{2\pi}\partial \bar{\partial}f  \\ \text{      on $M$},
\end{equation}
 for a smooth function $f$ with sufficiently fast decay, where $M = \Mbar \setminus D$
and  $\Ric(g)$ stands for the Ricci
form of the metric $g$.

Fix an orbifold hermitian metric  $||.||$
on $L_D$ such that its curvature form $\tilde\om$ 
is positive definite along $D$. 
We shall need to rescale the metric by a suitable factor which will 
be determined in the following discussion. Let us begin by observing that the
restriction $\Om|_D$ of $\Om$ to $D$ belongs to $c_1 (D)$ since, by
assumption, $\Om \in c_1(-K_{\Mbar} - L_D)$.
Hence, there exists a function $\varphi$ such that 
$\tilde{\om}|_D +  \frac{\sqrt{-1}}{2\pi} \partial \bar{\partial} \varphi $ 
defines a metric $g_D$ verifying $\Ric(g_D) = \Om|_D$.
So, by rescaling $||.||$ by an appropriate factor, we may assume 
that $\tilde\om$, when restricted to the 
infinity $D$, defines a metric $g_D$ such that $\Ric(g_D) = \Om|_D$.

Next denote by $S$ the defining section of $D$, and write 
$||.||_\phi  = e^{-\phi/2} ||.||$ for the rescaling 
of $||.||$, where $\phi$ is any smooth function on $\Mbar$.

We define

\begin{equation}
\label{omegaphi}
\om_\phi = \frac{\sqrt{-1}}{2\pi} \frac{n^{1 + 1/n}}{n+1} \partial \bar{\partial}
(-\log||S||_\phi^2)^{ \frac{n+1}{n}}.
\end{equation}

Then it follows that 
\begin{equation}
\label{relate}
\om_\phi = (-n\log||S||_\phi^2 )^{1/n}\tilde{\om}_\phi + 
\frac{1}{(-n\log||S||_\phi^2 )^{(n-1)/n}} \frac{\sqrt{-1}}{2\pi} \partial \log||S||_\phi^2
\wedge \bar\partial \log||S||_\phi^2,
\end{equation}
where $\tilde{\om}_\phi$ is the curvature form of the metric $||.||_\phi$.
From this expression, we can see that, as long as $\tilde{\om}_\phi$ is positive 
definite along $D$, ${\om_\phi}$ is positive definite near $D$.

%For further reference, we compute here
%\begin{equation}
%\label{omegaphin}
%\om_\phi^n =  (-n\log||S||_\phi^2 )\tilde{\om}_\phi^{n-1}
%\wedge (\tilde{\om}_\phi + n \frac{\sqrt{-1}}{2\pi}  \frac{\partial \log||S||_\phi^2
%\wedge \bar\partial \log||S||_\phi^2}{(-n\log||S||_\phi^2 )}).
%\end{equation}

We state here the main result of this section.

\begin{prop}
\label{t.inductive}
Let $\Mbar$ be a compact \kahler manifold of complex dimension $n$, and let $D$ be
an admissible divisor in  $\Mbar$. Consider also
a form  $\Om \in c_1(-K_{\Mbar} - L_D)$, where $L_D$ is
the line bundle induced by $D$.

Then there exist sequences of neighborhoods $\{V_m\}_{m \in \NN}$ of $D$
along with complete K\"ahler 
metrics $\om_m$  on $(V_m\setminus D, \partial(V_m\setminus D))$ (as defined in (\ref{omegaphi}))
such that
\begin{equation}
\label{e.thmseqs}
\text{Ric}(\om_m) - \Om = \frac{\sqrt{-1}}{2\pi}\partial \bar{\partial}f_m  \\ \text{  
on $V_m\setminus D$}
\end{equation}
where $f_m$ are smooth functions on $M = \Mbar \setminus D$. Furthermore 
each $f_m$ decays to the order of $O(||S||^m)$. In addition
the curvature tensors $R(g_m)$ of the metrics  $g_m$
decay at least to the order of $(-n\log||S||^2)^{\frac{-1}{n}}$ near the divisor.
\end{prop}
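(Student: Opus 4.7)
The plan is to construct the sequence $\{\om_m\}$ inductively on $m$, producing at each step a smooth rescaling $||\cdot||_m = e^{-\phi_m/2}||\cdot||$, with $\phi_m - \phi_{m-1}$ vanishing to suitably high order along $D$, so that the form $\om_m$ given by (\ref{omegaphi}) satisfies $\Ric(\om_m) - \Om = \frac{\sqrt{-1}}{2\pi}\partial\bar\partial f_m$ with $f_m = O(||S||^m)$. The base case rests on the preliminary normalization already in place: since $\tilde\om$ restricted to $D$ defines a metric $g_D$ with $\Ric(g_D) = \Om|_D$, a direct computation of $\om_0^n$ using (\ref{relate}) together with the identity $(\partial t\wedge\bar\partial t)^2=0$, where $t = -\log||S||^2$, will reveal that the leading term of $\om_0^n$ near $D$ is $n\,\tilde\om^{n-1}\wedge\frac{\sqrt{-1}}{2\pi}\partial t\wedge\bar\partial t$, from which one reads off that the zeroth-order restriction to $D$ of $f_0$ vanishes, giving $f_0 = O(||S||)$.

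For the inductive step, I would write $\phi_m = \phi_{m-1} + \psi_m$, with $\psi_m$ of the form $||S||^{m-1}\eta_m$ plus its complex conjugate and mixed real-part terms, where $\eta_m$ is a smooth function on $\Mbar$ whose restriction $\eta_m|_D$ is the unknown to be solved for. Linearizing $\log(\om_{\phi_m}^n/\om_{\phi_{m-1}}^n)$ in $\psi_m$ and extracting the leading $||S||^{m-1}$ coefficient, one obtains an inhomogeneous linear equation on $D$ of the schematic form $\cL\,\eta_m|_D = -(\text{leading coefficient of } f_{m-1})|_D$, where $\cL$ is a second-order elliptic operator built out of $g_D$ and the curvature of $||\cdot||$. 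The crucial point is that this equation is solvable on the compact orbifold $D$: this is precisely the content of Lemma~\ref{chato}, which I would invoke as a black box. The resulting $f_m$ has its leading term cancelled and therefore decays at least like $||S||^m$.

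The main obstacle is precisely the solvability issue subsumed by Lemma~\ref{chato}: depending on $m$ and the cokernel of $\cL$, one may need to enrich the ansatz (for instance, by allowing further Taylor coefficients in the transverse direction to $D$, or $\log||S||$ factors) in order to meet the compatibility conditions dictated by Fredholm theory on the compact $D$. Once this is overcome, the curvature estimate for $R(g_m)$ follows by a direct computation. Formula (\ref{relate}) shows that, in an orthonormal frame adapted to $\om_m$, tangential vectors to $D$ are rescaled by a factor of $(-n\log||S||^2)^{1/(2n)}$ while the single normal direction is rescaled by $(-n\log||S||^2)^{-(n-1)/(2n)}||S||^{-1}$; since the Riemann tensor of a rescaled \kahler metric picks up a factor proportional to the inverse squared tangential length scale, the dominant tangential contribution yields exactly the claimed $O((-n\log||S||^2)^{-1/n})$ decay, uniformly in the normal direction.
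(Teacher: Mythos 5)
Your overall strategy matches the paper's: an inductive construction of rescalings $||\cdot||_m = e^{-\phi_m/2}||\cdot||$, with each step designed to kill the leading term of $f_{m-1}$ by solving a linear elliptic equation on the compact divisor $D$, together with the insight that $\log||S||$ factors must be admitted to absorb the obstruction coming from the kernel of the linearized operator. The curvature estimate by tracking the anisotropic rescaling of tangential versus normal directions via (\ref{relate}) is also the right idea and essentially what Lemma~\ref{l.curvature} makes precise. So the route is the same.

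Two points, though, where the proposal is imprecise in a way that would matter if you tried to carry it out. First, you attribute the solvability analysis to Lemma~\ref{chato} and propose to invoke it as a black box; but in the paper Lemma~\ref{chato} is a \emph{purely computational} identity giving the linearization of $f$ under the perturbation $\phi$ (explicitly, $f_\phi = f_m + nm\phi + \cdots$ plus the shifted-Laplacian and $\log$-weight terms). It does not assert solvability. The solvability mechanism is the substantive content of the proof of Proposition~\ref{t.inductive} itself: one first raises the power of $(-\log||S||_m^2)$ by one to cancel the component of $u_{m+1,\ell}$ lying in $\Ker(\square_m + n(m+1)-1 - 2(n+1)j)$, and only then inverts the shifted Laplacian on the orthogonal complement. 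Leaving this as a ``black box'' hides the real work. Second, the ansatz $\psi_m = ||S||^{m-1}\eta_m$ plus conjugate is too coarse: the operator whose invertibility you need, $\square_m + n(m+1)-1 - 2(n+1)j$, depends on the bi-degree $(i,j)$ with $i+j = m+1$, so the perturbation must be decomposed by bi-degree as $\sum_{i+j=m+1} S^i\Sbar^j\theta_{ij} + S^j\Sbar^i\overline{\theta}_{ij}$ and the kernel/cokernel analysis must be run separately for each pair $(i,j)$. Without this refinement, the linearized equation on $D$ does not even split into the pieces to which Fredholm theory applies. With these two corrections your proposal coincides with the paper's proof.
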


\medskip
The remainder of this section will be devoted to the proof of Proposition~\ref{t.inductive}.

If $\tilde\om = - \frac{\sqrt{-1}}{2\pi}\partial \bar{\partial}\log||S||^2$ 
is the curvature form of $||.||$, then 
for any \kahler metric $g'$ on $\Mbar$, 
$\text{Ric}(g') - \tilde\om  \in c_1(-K_{\Mbar} -L_D)$.
Hence, up to constant,
there is a unique function $\Psi$ such that
\begin{equation}
\label{acima}
\Om = \text{Ric}(g') - \tilde\om +  \frac{\sqrt{-1}}{2\pi} \partial \bar{\partial} \Psi. 
\end{equation}

\begin{defin}
\label{fphi}
For $x$ in the set where $\om_\phi$ is positive definite ($x$ near $D$), write
$$
f_\phi(x)= -\log||S||^2  - \log(\frac{\om_\phi^n}{{\om'}^n}) - \Psi,
$$
where $\om'$ is the K\"ahler form of $g'$.
\end{defin}

\begin{lemma}
The function $f_0(x)$ converges uniformly to a constant if and only if 
$\Ric(g_D) = \Om|_D$.
\end{lemma}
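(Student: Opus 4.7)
The plan is to compute the limit of $f_0$ as one approaches $D$ in local coordinates, identify it as a smooth function $F$ on $D$, and characterize when $F$ is constant via a $\partial\bar\partial$ computation. First, I expand $\om_0^n$ using \eqref{relate} with $\phi=0$; setting $\zeta = \frac{\sqrt{-1}}{2\pi}\partial\log\|S\|^2 \wedge \bar\partial\log\|S\|^2$, the identity $\partial u \wedge \partial u = 0$ forces $\zeta \wedge \zeta = 0$, so the binomial expansion collapses to
\[
\om_0^n = (-n\log\|S\|^2)\,\tilde\om^n + n\,\tilde\om^{n-1}\wedge\zeta.
\]

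Next, working in local coordinates $(z_1, z')$ near $D = \{z_1 = 0\}$ with $\|S\|^2 = |z_1|^2 h$ for a smooth positive $h$, the form $\zeta$ has leading behavior $\frac{\sqrt{-1}}{2\pi}\frac{dz_1\wedge d\bar z_1}{|z_1|^2}$ while the first term grows only logarithmically, so the second dominates. Comparing against ${\om'}^n$ yields
\[
\frac{\om_0^n}{{\om'}^n} = \frac{C(z', \bar z')}{|z_1|^2}\bigl(1 + o(1)\bigr),
\]
where $C$ is smooth and positive on $D$, expressible as the ratio of $\det g_D$ to the appropriate transverse minor of $\det g'$. Plugging into Definition~\ref{fphi}, the singular terms $-\log|z_1|^2$ cancel exactly between $-\log\|S\|^2$ and $\log(\om_0^n/{\om'}^n)$, so $f_0$ extends continuously up to $D$ with a smooth limit $F(z')$; uniform convergence follows from the compactness of $D$ and the smoothness of the error.

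Finally, I apply $\partial\bar\partial$ to the explicit formula for $F$. Using $\Ric(g_D) = -\frac{\sqrt{-1}}{2\pi}\partial\bar\partial\log\det g_D$, the analogous identity for $\Ric(g')$, and the defining relation \eqref{acima}, a direct bookkeeping yields
\[
\frac{\sqrt{-1}}{2\pi}\partial\bar\partial F = \Ric(g_D) - \Om|_D.
\]
Since $D$ is compact K\"ahler, a real function $F$ on $D$ is constant if and only if $\partial\bar\partial F = 0$ (pluriharmonic is harmonic on K\"ahler, and compactness forces constancy), giving the equivalence. The main obstacle is the bookkeeping in this last step: correctly matching the restriction to $D$ of $\log\det g'$ (governing $\Ric(g')|_D$) against $\log\det g_D$ (governing $\Ric(g_D)$), and verifying that the residual combination $\tilde\om|_D - \Ric(g')|_D + \frac{\sqrt{-1}}{2\pi}\partial\bar\partial\Psi|_D$ is precisely $-\Om|_D$ via \eqref{acima}.
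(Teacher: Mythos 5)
Your proposal is correct and takes essentially the same route as the paper: expand $\om_0^n$ via \eqref{relate}, isolate the dominant $\tilde\om^{n-1}\wedge\zeta$ term to see that $\|S\|^2\om_0^n/{\om'}^n$ has a smooth nonvanishing limit on $D$, identify the boundary limit $F$ (the paper writes it explicitly as $-\log\bigl(a\det(h_{i\bar j})_{i,j<n}\,e^{\Psi}/\det(g'_{i\bar j})\bigr)$), and then characterize constancy of $F$ via $\frac{\sqrt{-1}}{2\pi}\partial\bar\partial F = \Ric(g_D) - \Om|_D$ together with \eqref{acima}. The only cosmetic difference is that you make the final step explicit (pluriharmonic on compact K\"ahler $\Rightarrow$ constant), which the paper leaves implicit.
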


\begin{proof}
Choose a coordinate system $(z_1,\dots,z_n )$ around a point $x$ near $D$ such that 
the local defining section $S$ of $D$ is given by $\{ z_n = 0\}$.
In these coordinates, write $\tilde{\om} = \tilde\om_0$ as $(h_{ij})_{1\leq i,j \leq n}$, 
$g'$ as  $(g'_{ij})_{1\leq i,j \leq n}$, and $||.||$ as a positive function $a$.

By definition we have
\begin{equation}
f_0(x)= - \log\left(\frac{||S||^2\om_0^n}{{\om'}^n}\right) - \Psi(x)
=
- \log\left(\frac{a \det(h_{ij})_{1\leq i,j \leq n-1} }{\det (g'_{ij})_{1\leq i,j \leq %%@
n}}\right)(x) 
- \Psi(x)
+ O(||S(x)||),
\end{equation}
for $x$ near $D$.

Since  $a^{-1}\det (g'_{ij})_{1\leq i,j \leq n}|_D$ is a well-defined volume form on $D$,
it makes sense to set
$$
f_0(x)= - \log(\frac{a \det(h_{ij})_{1\leq i,j \leq n-1}
e^\Psi}{\det (g'_{ij})_{1\leq i,j \leq n}})(z',0)
+ O(||S(x)||),
$$
for $x = (z', z_n)$.
Hence, $\lim_{x \rightarrow D}f_0(x)$ is a constant if and only if 
$\frac{a \det(h_{ij})_{1\leq i,j \leq n-1} e^\Psi}{\det (g'_{ij})_{1\leq i,j \leq n}}(z',0)$
is constant.
In other words, $\lim_{x \rightarrow D}f_0(x)$ is a constant if and only if 
$$
\frac{\sqrt{-1}}{2\pi} \partial \bar{\partial} \Psi = -\log\det(h_{ij})_{1\leq i,j \leq n-1} 
- \log a \det(g'_{ij})_{1\leq i,j \leq n}.
$$
Since $\Om = \text{Ric}(g') - \tilde\om +  \frac{\sqrt{-1}}{2\pi} \partial \bar{\partial} \Psi, $
$\lim_{x \rightarrow D}f_0(x)$ is a constant if and only if $\Ric(g_D) = \Om|_D$.
\end{proof}

An appropriate choice of $\Psi$ allows us to assume in the sequel that
$f_0(x)$ converges uniformly to zero as $x \rightarrow D$.

The function $f_0(x)$ was only defined for $x$ near $D$, but we can extend it smoothly to 
be zero along $D$ since $||S||^2 \om_0^n $ is a well-defined volume form over all $\Mbar$.
Hence, there exists a $\delta_0 > 0$ such that, in the neighborhood 
$V_0 := \{x \in \Mbar ; ||S(x)|| <  \delta_0\}$, 
$f_0$ can be written as 
$$
f_0 = S \cdot u_1 + \Sbar \cdot \overline{u}_1,
$$
where $u_1$ is a $C^\infty$ local section in $\Gamma(V_0, L_D^{-1})$.

Our goal now would be to construct a function $\phi_1$ of the form 
$S \cdot \theta_1 + \Sbar \cdot \overline{\theta}_1$, so that the 
corresponding $f_{\phi_1} = f_1$ vanishes at order $2$ along $D$, and then
proceed inductively to higher order. Unfortunately, there is an obstruction
to higher order approximation that lies in the kernel of the laplacian on
$L_D^{-1}$ restricted to $D$. 
In order to deal with this difficulty, one must introduce $(-\log||S||^2)$-terms in 
the expansion of $\phi_1$, as pointed out in \cite{Fef} and \cite{Mel} where 
the similar
problem of finding expansions for the solutions of the Monge-Amp\`ere equation
on a strictly pseudoconvex domain was treated. Further details can be found below.

Following the techniques in \cite{TY2}, we are going to construct inductively
 a sequence of hermitian metrics $\{||.||_m\}_{m>0}$ on 
$L_D$ such that, for any $m>0$, there exists a $\de_m >0$ satisfying:
\begin{enumerate}
\item
The corresponding K\"ahler form $\om_m$ associated to $||.||_m$ (as defined in
(\ref{omegaphi})) is positive definite in $V_m := \{x \in \Mbar ; ||S(x)|| <  \delta_m\}$.
\item
The function $f_m$ associated to $\om_m$ (as in the Definition~\ref{fphi}) can be expanded 
in $V_m$ as
\begin{equation}
\label{expansionfm}
f_m = 
\sum_{k\geq m+1} \sum_{\ell =0}^{\ell_k} u_{k\ell}(-\log||S||^2_m)^\ell,
\end{equation}
where $u_{k\ell}$ are smooth functions on $\overline{V}_m$ that vanish to order 
$k$ on $D$. In particular the function $u_{k\ell}$ can
be written as
$$
u_{k\ell} = \sum_{i+j = k}S^i \Sbar^j \theta_{ij} + S^j \Sbar^i \overline{\theta}_{ij}, 
$$
for  $\theta_{ij} \in \Gamma(V_m, L_D^{-i} \bigotimes  \overline{L}_D^{-j})$.
%From now on, we will refer to functions that can be written in this form as 
%of $O(||S||^k)$.
\end{enumerate}

We define $||.||_0 = ||.||$, and it is clear that $||.||_0$ satisfies the Conditions
1 and 2 above. 
Now we proceed on the inductive step: assuming the existence
of $||.||_m$, we construct 
$||.||_{m+1}$. The next lemma gives a relation between $f_m$ and $f_\phi$, where
 $||.||_\phi = e^{-\phi/2}||.||_m $, and  
$f_\phi$ is 
associated to a smooth function $\phi$  on $V_m$ of the form
$$
\phi = (\sum_{i+j = k}S^i \Sbar^j \theta_{ij} + 
S^j \Sbar^i \overline{\theta}_{ij})(-\log||S||^2_m)^k, \hspace{1cm} 
\text{for $k \geq m+1$.} 
$$

\begin{lemma}
\label{chato}
Let $f_\phi(x)$ be defined as in Definition~\ref{fphi}, associated to 
$$ 
\om_\phi = \frac{\sqrt{-1}}{2\pi} \frac{n}{n+1} n^{1/n} \partial \bar{\partial}
(-\log||S||_m^2.\phi)^{ \frac{n+1}{n}}.
$$
Similarly let $f_m$ be associated to $\om_m$.
Then 
\begin{multline}
f_\phi = f_m + nm \phi
+ \frac{k\phi}{(-\log||S||_m^2)}\left( \frac{k-1}{(-\log||S||_m^2)} + (m-1) \right) + 
\\
+(-\log||S||_m^2)^{k-1}  \sum_{i+j = m+1}  
\left\{
ij \left( S^i  \Sbar^j \theta_{ij} + S^j \Sbar^i \overline{\theta}_{ij} \right)
+ \right. \\ \left. 
+ (-\log||S||_m^2)
\left[-2(n+1)j \left( S^i  \Sbar^j \theta_{ij} + S^j \Sbar^i \overline{\theta}_{ij}\right)
+ 
S^i  \Sbar^j \square_m \theta_{ij} 
+ S^j \Sbar^i \overline{\square}_m\overline{\theta}_{ij}
\right]
\right\} \\
+\sum_{k'\geq m+2} \sum_{\ell =0}^{\ell_{k'}} u_{k'\ell}(-\log||S||^2_m)^\ell
\end{multline}
where $\square_m = tr_{\om_D}(\overline{D_m}D_m)$ is the Laplacian
of the bundle $L_D^{-1}\bigotimes L_D^{-j}$ on $D$
with respect to the hermitian metric $||.||_m$, and 
the functions $u_{k'\ell}$ decay as $O(||S||^{k'})$.
\end{lemma}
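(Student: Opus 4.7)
The plan is to carry out a direct Taylor expansion of the difference
$$f_\phi - f_m = -\log\frac{\om_\phi^n}{\om_m^n}$$
in powers of $\phi$ and of $t^{-1}$, where I write $t := -\log||S||_m^2$. The reduction to this ratio is immediate from Definition \ref{fphi}, since the summands $-\log||S||^2$, $\log {\om'}^n$ and $\Psi$ cancel between $f_\phi$ and $f_m$. The right-hand side of the lemma should then emerge as the leading contributions in this expansion, with the residual terms absorbed into $\sum_{k'\geq m+2}\sum_\ell u_{k'\ell}t^\ell$.

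First I would rewrite $\om_\rho$ for $\rho = t+\phi$ via the chain rule as
$$\om_\rho \;=\; \frac{\sqrt{-1}}{2\pi}\,n^{1/n}\rho^{1/n}\left[\partial\bar\partial\rho + \frac{1}{n\rho}\,\partial\rho\wedge\bar\partial\rho\right],$$
and apply the matrix determinant lemma to the rank-one perturbation $\rho_i\rho_{\bar j}$, which yields
$$\om_\rho^n \;=\; C_n\,\rho\,\det(\rho_{i\bar j})\left[1+\tfrac{1}{n\rho}\,\rho^{i\bar j}\rho_i\rho_{\bar j}\right]\,\left(\tfrac{\sqrt{-1}}{2}\right)^{\!n}\!\!dz\wedge d\bar z.$$
Taking the ratio with the analogous formula for $t$ and applying $-\log$ splits the answer into three factors: $\log(1+\phi/t)$, $\log\det(\delta^i_{\ j}+t^{i\bar\ell}\phi_{\ell\bar j})$, and the logarithm of the bracketed gradient term, each of which I would Taylor-expand in powers of $\phi/t$ and of the small tensor $\phi_{i\bar j}$.

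Second, I would substitute the assumed form $\phi = (\sum_{i+j=k}S^i\Sbar^j\theta_{ij}+\text{c.c.})\,t^k$ and compute $\partial\phi$, $\bar\partial\phi$ and $\partial\bar\partial\phi$ via the product rule. Each derivative produces two qualitatively distinct contributions: (i) derivatives of the bundle-valued factor $S^i\Sbar^j\theta_{ij}$, whose trace against $\om_D$ reassembles into the Laplacian $\square_m\theta_{ij}$, and whose cross-terms $\partial S^i\wedge\bar\partial\Sbar^j$ reproduce the coefficient $ij$ multiplying the diagonal $S^i\Sbar^j\theta_{ij}$ piece; and (ii) derivatives of $t^k$, which generate the algebraic factors $k/t$ and $k(k-1)/t^2$ visible in the second summand on the right-hand side. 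The explicit form of $\om_m$ in (\ref{relate}) shows that $\text{tr}_{\om_m}\tilde\om_m$ produces $n$ times an inverse power of $t$, which is how the coefficient $n$ enters the term $nm\phi$ and how the coefficient $-2(n+1)j$ arises in the bracketed correction.

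The main obstacle is the combinatorial bookkeeping: because $\phi$ carries the factor $t^k$, applying $\partial\bar\partial$ produces terms at three different orders ($t^k$, $t^{k-1}$, $t^{k-2}$), many of which partially cancel between the three factors of the ratio $\om_\phi^n/\om_m^n$ and between the linear and quadratic orders in $\phi$. Organizing the surviving pieces by total order of vanishing along $D$, matching those of order exactly $||S||^{m+1}$ with the explicit summands stated, and verifying that everything of vanishing order $\geq m+2$ fits the shape of the expansion (\ref{expansionfm}) is the core technical computation, and is the reason this argument is deferred to the dedicated Section \ref{s.proofchato}.
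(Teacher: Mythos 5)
Your reduction to $f_\phi - f_m = -\log(\om_\phi^n/\om_m^n)$ and the plan to expand via the rank-one matrix determinant identity, compute $\partial\phi$, $\bar\partial\phi$, $\partial\bar\partial\phi$ from the explicit ansatz, and regroup by order of vanishing along $D$ is exactly the route the paper takes in Section~\ref{s.proofchato}; the wedge-product identity $\om^n = \alpha\,\tilde\om^{n-1}\wedge(\tilde\om + \frac{n\sqrt{-1}}{2\pi\alpha}\frac{DS\wedge\overline{DS}}{|S|^2})$ used there is the matrix determinant lemma in a different guise. So there is no methodological disagreement.

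The gap is that you have identified the scaffolding but not the load-bearing step. You say that the second derivatives of the bundle factor ``reassemble into the Laplacian $\square_m\theta_{ij}$ by taking the trace against $\om_D$,'' but what actually appears in $\partial\bar\partial\phi$ is the Hessian $D_m\overline{D_m}\theta_{ij}$ (holomorphic followed by anti-holomorphic covariant derivative), whereas the Laplacian in the statement is $\square_m = \mathrm{tr}_{\om_D}(\overline{D_m}D_m)$, in the opposite order. On a Hermitian bundle with curvature these do not agree: one needs the commutation (Bochner--Kodaira) identities
$\overline{D_m}D_m S^j = -D_m\overline{D_m}S^j + jS^j\tilde\om_m$ and $\overline{D_m}D_m\theta_{ij} = -D_m\overline{D_m}\theta_{ij} - (i-j)\theta_{ij}\tilde\om_m$,
whose curvature terms are precisely what, after being traced against $\om_m$ and combined with the $(n-1)$ binomial coefficient from $\tilde\om_m^{n-2}\wedge\partial\bar\partial\phi\wedge(\cdots)$, produce the coefficient $-2(n+1)j$ in the lemma. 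Without this commutation you would obtain $\square_m$ in the wrong convention and a wrong $j$-linear coefficient, so the stated formula would not check out. Beyond that, your closing observation that the ``core technical computation\ldots is the reason this argument is deferred to the dedicated Section'' is circular: that section is the proof. A proposal needs to actually carry the bookkeeping through at least one full order (to confirm $nm\phi$ and $\frac{k\phi}{t}(\frac{k-1}{t}+(m-1))$, with $t=-\log||S||_m^2$) and to state the commutation identity explicitly, since these are where the nontrivial content of the lemma lies.
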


The proof of this lemma will be postponed to the next section so that we
can now proceed to our inductive construction.

\noindent {\bf Proof of Proposition~\ref{t.inductive}}:
We want to find a function $\phi$ such that 
$||.||^2_{m+1} = e^{-\phi/2}||.||^2_{m}$ satisfies the Conditions 1 and 2, 
{\em ie}, we need to eliminate the terms
$\sum_{\ell =0}^{\ell_{m+1}} u_{m+1,\ell}(-\log||S||^2_m)^\ell$
from the expansion of $f_m$.   
Each of the $ u_{m+1,\ell}$, $0\leq \ell \leq m+1$
will  be eliminated successively, as follows.

\begin{description}
\item{Step 1:}
Write $u_{m+1,\ell_{m+1}}$ as 
$$ 
u_{m+1,\ell_{m+1}} = \sum_{i+j = m+1}S^i \Sbar^j (v_{ij}+v'_{ij}) 
+ S^j \Sbar^i (\overline{v}_{ij} + \overline{v'}_{ij}), 
$$
where $v'_{ij}|_D \in \Ker(\square_m +n(m+1)-1 -2(n+1)j)$ and   $v_{ij}|_D$ 
is perpendicular to that kernel.
If there is some $i, j$  ($i+j = m+1$) such that $v'_{ij}|_D \neq 0$,
we use Lemma~\ref{chato} with $k = \ell_{m+1} +1$ and 
$\theta_{ij} = \frac{v'_{ij} \cdot k(m+1)}{-1-ij}$. Note that the constant 
$ \frac{ k(m+1)}{-1-ij}$
was chosen so as to eliminate the kernel term from 
the expression of $u_{m+1, \ell_{m+1}}$. 

Now Lemma~\ref{chato} implies
\begin{multline}
\label{fmprime}
f_{m}' := f_\phi = \left(\sum_{i+j = m+1}S^i \Sbar^j v_{ij} 
+ S^j \Sbar^i \overline{v}_{ij}\right)(-\log||S||_m^2)^{\ell_{m+1}} + \\ +  
\sum_{\ell =0}^{\ell_{m+1}-1} u_{m+1,\ell}(-\log||S||^2_m)^\ell
+\sum_{k'\geq m+1} \sum_{\ell =0}^{\ell_{k'}} u_{k'\ell}(-\log||S||^2_m)^\ell.  
\end{multline}

After Step 1, we can assume (by replacing $f_m$ by $f_m'$ in (\ref{fmprime})) that $f_m$
has an expansion of the form
$$ 
u_{m+1,\ell_{m+1}} = \sum_{i+j = m+1}S^i \Sbar^j (v_{ij}) 
+ S^j \Sbar^i (\overline{v}_{ij}). 
$$

\item{Step 2:}
Now we can solve
$$
\left(\square_m +n(m+1)-1 -2(n+1)j\right)\theta_{ij} = v_{ij}|_D \,\, \, \text{on  } \\  D,
$$
for  $\theta_{ij} \in \Gamma(V_m, L_D^{-i} \bigotimes  \overline{L}_D^{-j})$. Next
let us extend $\theta_{ij} $ to $\Mbar$, and then apply again Lemma \ref{chato}
with $k = \ell_{m+1}$ and $\theta_{ij}$ as above.
The new $f_m$ will have an expansion of the form
$$
f_m = \sum_{\ell =0}^{\ell_{m+1}-1} u_{m+1,\ell}(-\log||S||^2_m)^\ell + O(||S||^{m+2}.
$$
\end{description}

\medskip

By repeating Steps 1 and 2 above, we are able to eliminate all the terms
$\sum_{\ell =0}^{\ell_{m+1}} u_{m+1,\ell}(-\log||S||^2_m)^\ell$ from the 
expansion of $f_m$.
Finally,
let $\phi_m$ be the sum of all functions used in steps 1 and 2,
and define the new metric $||.||_{m+1}$ by letting
$||.||_{m+1}  = e^{-\phi/2}||.||_{m}$. Clearly
the resulting metric satisfies Conditions
1 and 2 of (\ref{expansionfm}). This completes the proof of the
proposition.\qed

\section{Proof of Lemma~\ref{chato}}
\label{s.proofchato}

This section is entirely devoted to proving Lemma~(\ref{chato}) therefore
completing the inductive construction of the metrics $||.||_{m}$.

According to Definition \ref{fphi}, we have  
$
f_\phi(x)=  
f_m - \log\left( \frac{\om_\phi^n}{\om_m^n}\right).
$
Hence, we just need to compute the quotient $\frac{\om_\phi^n}{\om_m^n}$.

Denote by $D_m$ (resp. $D_{\phi}$) the covariant derivative of the metric
$||.||_{m}$ (resp. $||.||_{\phi}$). Similarly, let
$\tilde{\om}_m$ and  $\tilde{\om}_\phi$ denote the corresponding curvature
forms. The following relations are well-known:
\begin{eqnarray}
\label{relations}
D_\phi S & = & D_m S -S\partial \phi \nonumber \\
\tilde{\om}_\phi & = &   \tilde{\om}_m + \frac{\sqrt{-1}}{2\pi}\partial \bar{\partial}\phi
\end{eqnarray}

For simplicity, set $\alpha_m = (-n \log ||S||_m^2)$ and
$\alpha_\phi = (-n \log ||S||_\phi^2) = \alpha_m + n\phi$. 
Then, 
\begin{equation}
\label{e.wmn}
\om_m^n = \alpha_m \tilde{\om}^{n-1}_m \wedge \left( \tilde{\om}_m
+ \frac{n\sqrt{-1}}{2\pi\alpha_m}
\frac{D_m S \wedge \overline{D_m S}}{|S|^2} \right)   \, .
%\nonumber\\
\end{equation}

\begin{multline}
\label{e.wphin}
\om_\phi^n = \alpha_\phi \tilde{\om}^{n-1}_\phi \wedge \left( \tilde{\om}_\phi + 
\frac{n\sqrt{-1}}{2\pi\alpha_\phi}
\frac{D_\phi S \wedge \overline{D_\phi S}}{|S|^2} \right)  = 
%\\
% = (\alpha_m + n\phi)
%\left( 
%\tilde{\om}_m + \frac{\sqrt{-1}}{2\pi}\partial \bar{\partial}\phi
%\right)^{n-1}\wedge
%\left[
%\left( 
%\tilde{\om}_m + \frac{\sqrt{-1}}{2\pi}\partial \bar{\partial}\phi
%\right)
%+ \right. \\ \left. +
%\frac{n\sqrt{-1}}{2\pi\alpha_\phi} 
%\left(
%\frac{D_m S - S \partial \phi}{S}\wedge \frac{\overline{D_m S} - \Sbar
%\overline{\partial} \phi}{\Sbar}
%\right)
%\right] = \\ \\
(\alpha_m + n\phi)
\left( 
\tilde{\om}_m + \frac{\sqrt{-1}}{2\pi}\partial \bar{\partial}\phi
\right)^{n-1} \wedge 
\\ 
\wedge
\left\{
\left( 
\tilde{\om}_m + \frac{\sqrt{-1}}{2\pi}\partial \bar{\partial}\phi
\right)
%+ \right. \\ \left. 
 + \frac{n\sqrt{-1}}{2\pi\alpha_\phi} 
\left[
\frac{D_m S \wedge \overline{D_m S}}{|S|^2}  - 
\partial\phi\wedge \frac{\overline{D_m S}}{\Sbar} -
\bar{\partial}\phi \wedge \frac{D_m S}{S} +
\partial\phi\wedge\bar\partial\phi
\right]
\right\} \, .
\end{multline}

\medskip

Some calculations using the definition of $\phi$ lead to
\begin{multline}
\label{delphi}
\partial \phi = 
\sum_{i+j = m+1}
(-\log||S||^2_m)^k
\left(
D_mS^i \Sbar^j \theta_{ij} + D_mS^j \Sbar^i \overline{\theta}_{ij} + S^i
\Sbar^j D_m\theta_{ij} +
%\right. \\ \left. 
 + S^j \Sbar^i D_m\overline{\theta}_{ij})
\right)+
\\
 + k(-\log||S||^2_m)^{k-1} (S^i \Sbar^j \theta_{ij} + S^j \Sbar^i \overline{\theta}_{ij})
\left(
-\frac{D_m S}{S}
\right).
\end{multline}

We will also need the expression for $\partial \bar \partial \phi$.
After some computations using (\ref{delphi}), 
it follows that
\begin{multline}
\label{deldelphi}
\partial \bar{\partial} \phi
= 
(-\log||S||^2_m)^k
\sum_{i+j = m+1}
\left\{
ij(S^i \Sbar^j\theta_{ij} + S^j \Sbar^i\overline{{\theta}}_{ij})
               \left(
\frac{D_m S \wedge \overline{D_m S}}{|S|^2}
               \right) +
\right. \\ \left.
+
(jS^i \Sbar^j D_m \theta_{ij} + iS^j \Sbar^i D_m \overline{\theta_{ij}})
\wedge\frac{\overline{D_m S}}{\Sbar}
+
\frac{{D_m S}}{S} \wedge (iS^i \Sbar^j \overline{D_m}\theta_{ij}
+  jS^j \Sbar^i \overline{D_m \theta_{ij}} ) +
\right. \\ \left.
+
(S^i \Sbar^j D_m \overline{D_m} \theta_{ij} + S^j \Sbar^i 
D_m \overline{D_m}\overline{\theta}_{ij})
+   k(-\log||S||^2_m)^{-1}
\left[ 
(S^i \Sbar^j \overline{D_m}\theta_{ij} + S^j \Sbar^i \overline{D_m}\overline{\theta}_{ij})
\wedge \frac{D_m S}{S}
+ 
\right. \right. \\ \left. \left. 
(S^i \Sbar^j {D_m}\theta_{ij} + S^j \Sbar^i {D_m}\overline{\theta}_{ij})
\wedge \frac{\overline{D_m} \Sbar}{\Sbar}
\right]
 \right\} +
%\\
\left(
\frac{k(m+1)\phi}{(-\log||S||^2_m)}
+
\frac{k(k-1)\phi}{(-\log||S||^2_m)^2}
\right)\frac{D_m S \wedge \overline{D_m S}}{|S|^2} \, .
\end{multline}

We can therefore conclude from a simple analysis of (\ref{deldelphi}) that
\begin{equation}
\label{O(2m+2)}
||S||_m^2 (\partial \bar{\partial} \phi)^\ell \wedge {\tilde{\om}_m}^{n-\ell}
 = 
{\tilde{\om}_m}^{n} O(||S||_m^{2m+2}) \, \, \, \text{for $\ell \geq 2$}.
\end{equation}

The above ingredients are going to be needed in the proof of
Lemma~\ref{chato}.

\noindent {\bf Proof of Lemma~\ref{chato}:} Recall that we only need
to compute the quotient $\frac{\om_\phi^n}{\om_m^n}$. Formulas~(\ref{e.wmn})
and~(\ref{e.wphin}) then provide  
\begin{multline}
\frac{\om_\phi^n}{\om_m^n}
 = 
\frac{||S||^2_m \alpha_\phi}{\alpha_m(||S||^2_m + 1/\alpha_m||D_mS||^2_m )\tilde{\om}_m^n} 
\left\{
     \left(\tilde{\om}_m + \frac{\sqrt{-1}}{2\pi}\partial \bar{\partial}\phi\right)^{n-1} \wedge 
%\right. \\ \left.
\left\{
      \left( \tilde{\om}_m + \frac{\sqrt{-1}}{2\pi}\partial \bar{\partial}\phi \right)
+ \right. \right. \\ \left. \left.
+  \frac{n\sqrt{-1}}{2\pi\alpha_\phi} 
\left[
\frac{D_m S \wedge \overline{D_m S}}{|S|^2}  - 
\partial\phi\wedge \frac{\overline{D_m S}}{\Sbar} -
%\right. \right.\right. \\ \left. \left.\left.
\bar{\partial}\phi \wedge \frac{D_m S}{S} +
\partial\phi\wedge\bar\partial\phi
\right]
\right\}
\right\}
\\
 = 
\frac{||S||^2_m \alpha_\phi}{(\alpha_m||S||^2_m +||D_mS||^2_m )\tilde{\om}_m^n} \cdot
\left\{
\left[\tilde{\om}_m^{n-1} + 
%\right.\right. \\ \left. \left.
(n-1)\tilde{\om}_m^{n-2}  \wedge \partial \bar{\partial}\phi \right]
\wedge
\right. \\  \left.
\left[\tilde{\om}_m + \frac{\sqrt{-1}}{2\pi}\partial \bar{\partial}\phi  
+
%\right. \right. \\ \left. \left.
\frac{n\sqrt{-1}}{2\pi\alpha_\phi} 
\left(
     \frac{D_m S \wedge \overline{D_m S}}{|S|^2}  - 
     \partial\phi\wedge \frac{\overline{D_m S}}{\Sbar} 
%\right.\right.\right. \\ \left. \left.\left.
      -\bar{\partial}\phi \wedge \frac{D_m S}{S} +
      \partial\phi\wedge\bar\partial\phi                   \right)
\right]
\right\} + O(||S||_m^{m+2})
\end{multline}

Direct calculations using (\ref{delphi}) and its analogous formula for $\bar\partial\phi$ 
lead to (further details can be found in \cite{S})
\begin{equation}
\label{e.deledelbar}
\partial\phi\wedge \frac{\overline{D_m S}}{\Sbar}+ \bar{\partial}\phi \wedge \frac{D_m S}{S} 
 = 
\left( (m+1)\phi +  %\right.\left. 
\frac{2k\phi}{\log||S||^2_m}\right)\frac{D_m S \wedge \overline{D_m S}}{|S|^2}
+ O(||S||_m^{m+1}).
\end{equation}

Thus it follows that
\begin{multline}
\label{e.blah}
\frac{\om_\phi^n}{\om_m^n}
 = 
\frac{||S||^2_m \alpha_\phi}{\alpha_m(||S||^2_m + 1/\alpha_m||D_mS||^2_m )\tilde{\om}_m^n} \cdot
\left\{
\left[\tilde{\om}_m^{n-1} + 
%\right.\right. \\ \left. \left.
(n-1)\tilde{\om}_m^{n-2}  \partial \bar{\partial}\phi \right)]
\wedge
\left[\tilde{\om}_m + \frac{\sqrt{-1}}{2\pi}\partial \bar{\partial}\phi  
+
\right. \right. \\ \left. \left.
\frac{n\sqrt{-1}}{2\pi\alpha_\phi} 
\left(1 - (m+1)\phi - \frac{2k\phi}{\log||S||^2_m}
                     \right)
\frac{D_m S \wedge \overline{D_m S}}{|S|^2} + \partial\phi\wedge\bar\partial\phi
\right]
\right\} + O(||S||_m^{m+2}). 
\end{multline}
On the other hand, (\ref{e.deledelbar}) implies that the summand with 
$\partial\phi\wedge\bar\partial\phi$ in (\ref{e.blah})
can be bounded by a multiple of $||S||_m^{m+3} \tilde\om_m^n$. So, we obtain
\begin{multline}
\label{e.blah2}
\frac{\om_\phi^n}{\om_m^n}
 = 
\frac{||S||^2_m \alpha_\phi}{\alpha_m(||S||^2_m + 1/\alpha_m||D_mS||^2_m )\tilde{\om}_m^n} \cdot
\\
\left\{
\tilde{\om}_m^{n-1} \wedge
\left[
\tilde{\om}_m + \frac{\sqrt{-1}}{2\pi}\partial \bar{\partial}\phi +
\frac{n\sqrt{-1}}{2\pi\alpha_\phi} 
\left(1 -(m+1)\phi  -\frac{2k\phi}{\log||S||^2_m}\right)
\frac{D_m S \wedge \overline{D_m S}}{|S|^2}\right] +
\right.  \\ \left. 
+ (n-1)\tilde{\om}_m^{n-2} \wedge
\left( \frac{\sqrt{-1}}{2\pi}\partial \bar{\partial}\phi \right)
\wedge
\left(\tilde{\om}_m + \frac{n\sqrt{-1}}{2\pi\alpha_\phi} \frac{D_m S \wedge
\overline{D_m S}}{|S|^2}\right)
\right\} + O(||S||_m^{m+2}). 
\end{multline}
Also, the definitions of $\alpha_\phi$ and $\alpha_m$ give
\begin{equation}
\label{cool}
\alpha_\phi||S||_m^2  \tilde{\om}_m^{n-1}\wedge 
\left(
\tilde{\om}_m +  \frac{n\sqrt{-1}}{2\pi\alpha_\phi} \frac{D_m S \wedge \overline{D_m S}}{|S|^2}
\right) =  
||S||_m^2 \tilde{\om}_m^{n}\left( \alpha_m +
\frac{||D_mS||_m^2 }{||S||_m^2}\right) + O(||S||_m^{m+2})\tilde{\om}_m^{n}.
\end{equation}
Therefore,
\begin{multline}
\label{e.huge}
\frac{\om_\phi^n}{\om_m^n}
 = 
1+ \frac{||S||^2_m \alpha_\phi}{\alpha_m(||S||^2_m + 1/\alpha_m||D_mS||^2_m )
\tilde{\om}_m^n} \cdot
\\
\left\{
n\tilde{\om}_m^{n-1} \wedge
\left(\frac{\sqrt{-1}}{2\pi}\partial \bar{\partial}\phi\right) -
%\right. \\ \left.
\phi\left((m+1) + \frac{2k}{(-\log||S||^2_m)} \right)\cdot
%\right. \\ \left.  
\left[
\tilde{\om}_m^{n-1} \wedge
\frac{n\sqrt{-1}}{2\pi\alpha_\phi} 
\frac{D_m S \wedge \overline{D_m S}}{|S|^2} \right]+
\right. \\ \left.
+ (n-1)\tilde{\om}_m^{n-2} \wedge
\left( \frac{\sqrt{-1}}{2\pi}\partial \bar{\partial}\phi \right)
\wedge
\left( \frac{n\sqrt{-1}}{2\pi\alpha_\phi} \frac{D_m S \wedge \overline{D_m S}}{|S|^2}
\right)
\right\} + O(||S||_m^{m+2})=
\\
=
1 - \left( (m+1) + \frac{2k}{(-\log||S||^2_m)}\right)\phi 
+ 
\frac{||S||^2_m \alpha_\phi}{\alpha_m(||S||^2_m + 1/\alpha_m||D_mS||^2_m )\tilde{\om}_m^n} \cdot
\\
\left\{
n\tilde{\om}_m^{n-1} \wedge
\left(\frac{\sqrt{-1}}{2\pi}\partial \bar{\partial}\phi\right) +
%\right.\\ \left.
(n-1)\tilde{\om}_m^{n-2} \wedge
\left( \frac{\sqrt{-1}}{2\pi}\partial \bar{\partial}\phi \right)
\wedge
\left( \frac{n\sqrt{-1}}{2\pi\alpha_\phi} \frac{D_m S \wedge \overline{D_m S}}{|S|^2}\right)
\right\}
+ O(||S||_m^{m+2})
 = \\ = 
1 - \left((m+1) + \frac{2k}{(-\log||S||^2_m)}\right)\phi 
+ 
\frac{||S||^2_m \alpha_\phi}{\alpha_m(||S||^2_m + 1/\alpha_m||D_mS||^2_m )\tilde{\om}_m^n} 
\cdot
\\ \cdot 
\left\{
(-\log||S||^2_m)^k 
\sum_{i+j=m+1}
\left[
ij(S^i \Sbar^j \theta_{ij} + S^j \Sbar^i \overline{\theta}_{ij}) 
\right]
\left(
\tilde{\om}_m^{n-1} 
\wedge
%\right. \\ \left. \wedge
\left(
\frac{n\sqrt{-1}}{2\pi\alpha_\phi} \frac{D_m S \wedge \overline{D_m S}}{|S|^2}
\right) \right)+ 
\right. \\ \left.
+ (n-1)\tilde{\om}_m^{n-2} \wedge
\left( \frac{\sqrt{-1}}{2\pi}\partial \bar{\partial}\phi \right)
\wedge
\left( \frac{n\sqrt{-1}}{2\pi\alpha_\phi} \frac{D_m S \wedge \overline{D_m S}}{|S|^2}\right)
+
\right. \\ \left. +
\phi\left(\frac{n(m+1)k}{(-\log||S||^2_m)}
+ \frac{n(k-1)k}{(-\log||S||^2_m)^2}\right)
\left(
\tilde{\om}_m^{n-1} \wedge
\frac{n\sqrt{-1}}{2\pi\alpha_\phi} \frac{D_m S \wedge \overline{D_m S}}{|S|^2}
\right)
\right\} + O(||S||_m^{m+2}).
\end{multline}
The last term of (\ref{e.huge})  can be simplified by using 
(\ref{cool}), yielding
\begin{multline}
\frac{\om_\phi^n}{\om_m^n}
 = 
1 + \left(-(m+1)  
+ \frac{k(m-1)}{(-\log||S||^2_m)}
+ \frac{(k-1)k}{(-\log||S||^2_m)^2}\right)\phi
+
%\\ +
\frac{\alpha_\phi||S||^2_m}{\alpha_m(||S||^2_m + 1/\alpha_m||D_mS||^2_m )\tilde{\om}_m^n} \cdot
\\ \cdot
\left\{
 (-\log||S||^2_m)^k
\sum_{i+j=m+1}
\left[ij(S^i \Sbar^j \theta_{ij} + S^j \Sbar^i \overline{\theta}_{ij})\right] 
\tilde{\om}_m^{n-1} \wedge
\left(
\frac{n\sqrt{-1}}{2\pi\alpha_\phi} \frac{D_m S \wedge \overline{D_m S}}{|S|^2}
\right)+ 
\right. \\ \left.
+
 (n-1)\tilde{\om}_m^{n-2} \wedge
\left( \frac{\sqrt{-1}}{2\pi}\partial \bar{\partial}\phi \right)
\wedge
\left( \frac{n\sqrt{-1}}{2\pi\alpha_\phi} \frac{D_m S \wedge \overline{D_m S}}{|S|^2}\right)
\right\}.
\end{multline}

Now observe that the relations
\begin{eqnarray}
\overline{D_m} D_m S^j & = & -D_m \overline{D_m} S^j +jS^j\tilde{\om}_m \nonumber\\
\overline{D_m} D_m\theta_{ij} & = & -D_m \overline{D_m}
\theta_{ij} -(i-j)\theta_{ij}\tilde{\om}_m 
\end{eqnarray}
imply that
\begin{multline}
\sum_{i+j = m+1}
\left(S^i \Sbar^j D_m \overline{D_m} \theta_{ij} + S^j \Sbar^i 
D_m \overline{D_m}\overline{\theta}_{ij}\right)
=  \\
\sum_{i+j = m+1}\left\{\left(S^i \Sbar^j  \overline{D_m} D_m\theta_{ij} +
 S^j \Sbar^i  \overline{D_m}D_m\overline{\theta}_{ij}\right) -\left(S^i \Sbar^j\theta_{ij} -
 S^j \Sbar^i\overline{\theta}_{ij}\right)(i-j) \tilde{\om}_m\right\}. 
\end{multline}

\noindent Hence,
\begin{multline}
\frac{\om_\phi^n}{\om_m^n}
 = 
1 + \left(-(m+1) + 
 \frac{k(m-1)}{(-\log||S||^2_m)}
+ \frac{k(k-1)}{(-\log||S||^2_m)^2}\right)\phi
+
\frac{\alpha_\phi||S||^2_m }{\alpha_m(||S||^2_m + 1/\alpha_m||D_mS||^2_m )\tilde{\om}_m^n} \cdot
 \\ 
\left\{
%\right. \\ \left.
(-\log||S||^2_m)^k \sum_{i+j=m+1}
[
\alpha_\phi ij(S^i \Sbar^j \theta_{ij} + S^j \Sbar^i \overline{\theta}_{ij})
]
%\right.\\ \left.
\tilde{\om}_m^{n-1} \wedge
\left(
\frac{n\sqrt{-1}}{2\pi\alpha_\phi} \frac{D_m S \wedge \overline{D_m S}}{|S|^2}
\right)
+\right.\\ \left.
+
(n-1)\tilde{\om}_m^{n-2} \wedge
\left(
\frac{n\sqrt{-1}}{2\pi\alpha_\phi} \frac{D_m S \wedge \overline{D_m S}}{|S|^2}
\right)
%\wedge
%\right.\\ \left.
\wedge
%(-\log||S||^2_m)^k 
\left[
%\sum_{i+j = m+1}
-\left(S^i \Sbar^j  \overline{D_m} D_m\theta_{ij} +
 S^j \Sbar^i  \overline{D_m}D_m\overline{\theta}_{ij}\right) 
\right. \right.\\ \left. \left.
+\left(S^i \Sbar^j\theta_{ij} +
 S^j \Sbar^i\overline{\theta}_{ij}\right)(i-j) \tilde{\om}_m. 
\right]
\right\} + 
 O(||S||_m^{m+2}).
\end{multline}
Notice that we can replace the term involving $\alpha_\phi$ from the last formula by 
the analogous term involving $\alpha_m$, since the function $\phi$ is assumed to be of
order $O(||S||_m^{m+1})$. This implies that the residual term of this substitution will lie 
in the term  $ O(||S||_m^{m+2})$.
Therefore, we conclude
\begin{multline}
\frac{\om_\phi^n}{\om_m^n}
=
1 + \left(-(m+1)  
+ \frac{k(m-1)}{(\log||S||^2_m)}
+ \frac{k(k-1)}{(\log||S||^2_m)^2}\right)\phi
-
\\
- (-\log||S||^2_m)^k 
\sum_{i+j = m+1}\left(S^i \Sbar^j  \square_m\theta_{ij} +
 S^j \Sbar^i  \overline{\square}_m\overline{\theta}_{ij}\right) 
+ (-\log||S||^2_m)^k \cdot \\ \cdot \sum_{i+j = m+1}\alpha_m ij
+ (n-1)((m+1)-2j)
\left(S^i \Sbar^j\theta_{ij} +
 S^j \Sbar^i\overline{\theta}_{ij}
\right) + O(||S||_m^{m+2})
= \\ =
1 + n(m+1)\phi +  \frac{k}{(-\log||S||^2_m)}
\left(  \frac{k-1}{(-\log||S||^2_m)} + (m-1) \right)
+
\\
+
\sum_{i+j = m+1}\left(S^i \Sbar^j  \square_m\theta_{ij} +
 S^j \Sbar^i  \overline{\square}_m\overline{\theta}_{ij}\right)
+
2(n-1)
 (-\log||S||^2_m)^k \sum_{i+j = m+1} j \left(S^i \Sbar^j\theta_{ij} +
 S^j \Sbar^i\overline{\theta}_{ij}\right) +
\\
+  (\log||S||^2_m)^{k-1} \sum_{i+j = m+1} ij \left(S^i \Sbar^j\theta_{ij} -
 S^j \Sbar^i\overline{\theta}_{ij}\right) +
O(||S||_m^{m+2}).
\end{multline}
Finally,
\begin{multline}
f_\phi
= f_m  - \log\left(\frac{\om_\phi^n}{\om_m^n}\right)=
nm\phi +  \frac{k \phi}{(-\log||S||^2_m)}
\left(  \frac{k-1}{(-\log||S||^2_m)} - (m-1) \right)+ 
\\ +
 (-\log||S||^2_m)^{k}
\sum_{i+j = m+1}
\left[
\left(S^i \Sbar^j  \square_m\theta_{ij} +
 S^j \Sbar^i  \overline{\square}_m\overline{\theta}_{ij}\right)
+2(n+1) j \left(S^i \Sbar^j\theta_{ij} -
 S^j \Sbar^i\overline{\theta}_{ij}\right)
\right]
+ \\
+(-\log||S||^2_m)^{k-1} \sum_{i+j = m+1} ij \left(S^i \Sbar^j\theta_{ij} -
 S^j \Sbar^i\overline{\theta}_{ij}\right)  + O(||S||_m^{m+2}),
\end{multline}
which proves the lemma. The inductive construction
of the metrics $||.||_m$ is also completed.\qed
%\end{proof}

\section{Complete \kahler Metrics on $M$}
\label{s.whole}

In this section we shall finish the proof of Theorem~\ref{t.approximatingmetrics}.
In particular it is going to be necessary to consider the asymptotic
behavior of the Riemann curvature tensor.

For each $m \geq 1$, consider the function $f_m$ constructed in Section~\ref{s.approx}.
For this choice, let the corresponding
$$
\om_m = 
%\frac{\sqrt{-1}}{2\pi} \frac{n}{n+1} n^{1/n} \partial \bar{\partial}
%(-\log||S||_m^2)^{ \frac{n+1}{n}} = 
\frac{\sqrt{-1}}{2\pi(n+1)} \partial \bar{\partial}
(-n\log||S||_m^2)^{ \frac{n+1}{n}}
$$
define a $(1,1)-$form on $M$.
If $\delta_m$ is sufficiently small, 
$\om_m$ is positive definite on 
\hbox{$V_m = \{ ||S(x)|| \leq \delta_m \}$}, and defines a \kahler metric 
$g_m$. 

\begin{lemma}
\label{l.complete}
The \kahler manifolds $(V_m, \partial V_m, g_m)$
are all complete, equivalent to each other near $D$, and for each
$m>0$, the function 
$$
\rho= \frac{2}{n+1}(-n\log||S||_m^2)^{\frac{n+1}{2n}} 
$$
is equivalent to any distance function from a fixed point 
in $V_m$ near $D$. 
\end{lemma}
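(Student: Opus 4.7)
The plan is to exploit the explicit decomposition of $\om_m$ obtained by combining (\ref{omegam}) and (\ref{relate}). Setting $t_m := -\log ||S||_m^2$ and noting that $d\rho = (nt_m)^{(1-n)/(2n)}\,dt_m$ by the very definition of $\rho$, the form rewrites as
$$
\om_m = (nt_m)^{1/n}\,\tilde\om_m + \frac{\sqrt{-1}}{2\pi}\,\partial\rho\wedge\bar\partial\rho,
$$
so the second summand is exactly the \emph{radial} contribution measured in the coordinate $\rho$. The crux of the proof will then be the pointwise estimate $|d\rho|_{g_m}\to 1$ (or at any rate, a positive constant) as $x\to D$. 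To establish it I would work in local coordinates $(z_1,\dots,z_n)$ adapted to the divisor, with $S$ a unit multiple of $z_n$. One checks that the dominant entry of the metric matrix is $g_{n\bar n}\sim (nt_m)^{-(n-1)/n}|z_n|^{-2}$, coming from the second summand, so $g^{n\bar n}\sim (nt_m)^{(n-1)/n}|z_n|^2$; together with $\partial_n\rho \sim -(nt_m)^{(1-n)/(2n)}/z_n$ the radial contribution to $|\partial\rho|^2_{g_m}$ is $O(1)$, while the tangential contributions are $O((nt_m)^{-1})$ and thus negligible.

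With $|d\rho|_{g_m}$ uniformly bounded in a neighborhood of $D$ (by some constant $C$), every piecewise smooth curve $\gamma$ from a fixed base point $x_0$ to a point $x$ close to $D$ satisfies
$$
L_{g_m}(\gamma)\ \geq\ C^{-1}\int_\gamma |d\rho|\ \geq\ C^{-1}\bigl(\rho(x)-\rho(x_0)\bigr).
$$
Since $\rho\to\infty$ as $x\to D$, this simultaneously yields the lower bound $d_{g_m}(x_0,x)\geq C^{-1}\rho(x)-O(1)$ and proves completeness: any Cauchy sequence in $(V_m\setminus D,g_m)$ must eventually lie in a sublevel set $\{\rho\leq R\}$, whose closure in $M$ is compact. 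For the matching upper bound I would integrate along an explicit radial curve parameterized by $\rho$, flowing along the $g_m$-gradient of $\rho$ (essentially the $z_n$-direction in the coordinates above): its length is bounded by $\int_{\rho_0}^{\rho(x)} |d\rho|_{g_m}^{-1}\,d\rho\leq C'\rho(x)$. Combining the two inequalities gives $d_{g_m}(x_0,x)\asymp\rho(x)$ for $x$ near $D$.

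For the equivalence of the $g_m$'s among themselves, recall that by the construction in Section~\ref{s.approx} one has $||.||_{m'} = e^{-\Phi/2}||.||_m$ where $\Phi$ is a finite sum of functions of the form $\sigma\cdot(-\log||S||^2)^k$ with $\sigma$ a smooth section vanishing to positive order along $D$; in particular $\Phi\to 0$ as $x\to D$, so $t_{m'}/t_m\to 1$. Plugging this into the decomposition of $\om_m$ above shows $g_{m'}\asymp g_m$ near $D$, and likewise that the associated functions $\rho_m$ are mutually equivalent. The main obstacle throughout is precisely the first step: establishing $|d\rho|_{g_m}\asymp 1$ requires careful bookkeeping of the orders of vanishing and of the $(-\log||S||^2)$-powers in local coordinates; once that estimate is in hand, the remainder of the lemma reduces to standard arc-length estimates along integral curves of $\nabla\rho$.
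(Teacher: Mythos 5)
Your proposal is correct and follows the same strategy as the paper: both start from the decomposition (\ref{relate}), use it to show that $|d\rho|_{g_m}$ tends to a positive constant near $D$ (the paper computes $|\nabla_m\rho|_{g_m}^2 = \frac{1}{n}\,\frac{||D_mS||_m^2}{(-n\log||S||_m^2)+||D_mS||_m^2}$ coordinate-freely, while you estimate components in adapted coordinates), then deduce distance equivalence and completeness from $\rho\to\infty$, and finally obtain equivalence of the $g_m$'s from the convergence of the conformal factors $e^{-\phi_m/2}$. The only stylistic difference is that the paper's coordinate-free formula for the gradient norm avoids the local-coordinate bookkeeping you flag as the "main obstacle."
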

\begin{proof}
Fix $m>0$. We have
$$
|\nabla_m \rho|^2_{g_m} = \frac{\sqrt{-1}}{2\pi} 
\frac{\partial \rho \wedge \bar\partial \rho \wedge \om_m^{n-1}}{\om_m^{n}}.
$$
Since 
$$
\partial \rho = n^{\frac{n+1}{2n}} (-\log ||S||^2_m)^{\frac{1-n}{2n}} \frac{D_mS}{S}
$$
it follows that
\begin{equation}
\frac{\sqrt{-1}}{2\pi} \partial \rho \wedge \bar\partial \rho \wedge \om_m^{n-1}
 = 
 (-n\log ||S||^2_m)^{\frac{1-n}{2n}}
(-n\log ||S||^2_m)^{\frac{n-1}{2n}}  \tilde{\om}_m^{n-1} \wedge
\frac{\sqrt{-1}}{2\pi}
\frac{D_m S \wedge \overline{D_m S}}{|S|^2}.
\end{equation}
Therefore, we conclude that
\begin{equation}
|\nabla_m \rho|^2_{g_m} =
\frac{1}{n}
\frac{ \tilde{\om}_m^{n-1}\wedge
\frac{n\sqrt{-1}}{2\pi}
\frac{D_m S \wedge \overline{D_m S}}{|S|^2}}{(-n\log ||S||^2_m)\tilde{\om}_m^{n} +  
\tilde{\om}_m^{n-1}\wedge
\frac{n\sqrt{-1}}{2\pi}
\frac{D_m S \wedge \overline{D_m S}}{|S|^2}}
 = 
\frac{1}{n}
\frac{||D_mS||_m^2}{(-n\log ||S||^2_m) + ||D_mS||_m^2}.
\end{equation}
Now,
recall that $||D_mS||_m^2$
is never zero, and that
$\lim_{||S||_m \rightarrow 0} (-\log||S||^2_m \cdot ||S||^2_m) = 0$. Hence
$$ 
\xymatrix{ |\nabla_m \rho|^2_{g_m} \ar[rr] ^{||S||_m \rightarrow 0} & &
%\stackrel{||S||_m \rightarrow 0}{\longrightarrow}&
 \frac{1}{n}}, 
$$
proving that $\rho$ is equivalent to any distance function from the boundary near $D$.

Also, since $\rho \rightarrow \infty$ when $x \rightarrow D$, the K\"ahler manifold 
 $(V_m, \partial V_m, g_m)$ is complete.

We claim that all the metrics $g_m$ are equivalent near $D$. To check
the claim, note first that each
$\tilde\om_m$ is the curvature form of the metric $||.||_m$, 
hence, for every $m, \ell \in \NN$,  $\tilde\om_m$ is equivalent to $\tilde\om_\ell$ 
near $D$. The claim then follows from (\ref{relate}),
that relates the expressions for $\om_m$ and
$\tilde\om_m$. 

Finally here is a remark about the volume growth of $(V_m, \partial V_m , g_m)$:
since $\om_m^n$ is equivalent to $\tilde\om_m^n (-n\log ||S||_m^2)$, it suffices
to consider the integral
$$
\int_{||S(x)||_m \geq e^{-1/2n \rho^{\frac{2n}{n+1}}}} (-n\log ||S||_m^2)\tilde\om_m^n,
$$
which is of order $\rho^{\frac{2n}{n+1}}$.
\end{proof}

In the sequel we are going to carry out the estimates of the Riemann
curvature tensor $R(g_m)$ corresponding to the metric $g_m$ which are
involved in the statement of Theorem~(\ref{t.approximatingmetrics}). Let us
begin with the following lemma:

\begin{lemma}
\label{l.curvature}
Let  $(V_m, \partial V_m, g_m)$ be complete,  \kahler manifolds with 
boundary defined as in Lemma \ref{l.complete}.
Then the norm of $R(g_m)$ with respect to the metric $g_m$ decays at the order of
at least $(-n\log||S||_m^2)^{\frac{-1}{n}}$ near $D$.
\end{lemma}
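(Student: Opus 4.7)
The plan is to verify the decay rate by a direct local computation, guided by a comparison with an exactly solvable warped-product model. Pick a point $p$ near $D$ and choose local holomorphic coordinates $(z_1, \ldots, z_n)$ centered at the nearest divisor point, in which $D = \{z_n = 0\}$ and $||S||_m^2 = a(z,\bar z)|z_n|^2$ for a smooth positive function $a$. Set $F := -\log ||S||_m^2$. Formula~(\ref{relate}) then expresses $\om_m$ as
$$\om_m = F^{1/n}\, \tilde\om_m \, + \, F^{-(n-1)/n}\, \frac{\sqrt{-1}}{2\pi}\, \partial F \wedge \bar\partial F,$$
so the coefficient matrix of $g_m$ consists of a tangential block of size $F^{1/n}$ plus a rank-one normal contribution of size $F^{-(n-1)/n}|z_n|^{-2}$. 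In particular the inverse metric $g_m^{i\bar{j}}$ has tangential block of order $F^{-1/n}$ and normal-normal entry of order $|z_n|^2 F^{(n-1)/n}$.

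Passing to the geodesic-radial coordinate $\rho$ of Lemma~\ref{l.complete}, together with the angular coordinate $\theta := \arg z_n$ and coordinates on $D$, the leading-order metric becomes an almost-product warped metric
$$g_m \; \asymp \; \rho^{2/(n+1)}\, g_D \, + \, d\rho^2 \, + \, c_n\, \rho^{-2(n-1)/(n+1)}\, d\theta^2,$$
with corrections that are either tangential to $D$ (smooth in the $z_j$, $j<n$) or of order $O(||S||_m^{m+1})$ thanks to the construction of Section~\ref{s.approx}. For this warped-product model the Riemann tensor is explicitly computable: the sectional curvatures involving $\partial_\rho$ or $\partial_\theta$ are of order $\rho^{-2} \asymp F^{-(n+1)/n}$, while the sectional curvatures tangent to $D$ arise from the conformal rescaling $F^{1/n} g_D$ and are of order $F^{-1/n}|R(g_D)|_{g_D}$. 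Since $F^{-1/n} \gg F^{-(n+1)/n}$ near $D$, the tangential contribution dominates and the curvature of the model metric, measured in $g_m$-norm, is exactly of order $F^{-1/n}$.

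It remains to bound the perturbation from the model. Substituting the size estimates for $g_m^{i\bar{j}}$ into the formula
$$R_{i\bar{j}k\bar{l}} = -\partial_k \partial_{\bar{l}} g_{i\bar{j}} + g_m^{p\bar{q}}\, \partial_k g_{i\bar{q}}\, \partial_{\bar{l}} g_{p\bar{j}},$$
every term is checked to contribute at an order strictly smaller than $F^{-1/n}$, using that the derivatives $\partial_k$, $\partial_{\bar{l}}$ applied either to $F^{1/n}$ or to smooth tangential data produce at most a single factor of $1/z_n$, which is absorbed by the $|z_n|^2$ factor of the normal block of $g_m^{p\bar{q}}$. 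This yields the stated estimate $|R(g_m)|_{g_m} = O((-n\log ||S||_m^2)^{-1/n})$.

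The main obstacle lies in this last bookkeeping step: the two very different scalings of the tangential and normal entries of $g_m^{i\bar{j}}$ propagate through the Christoffel symbols into the curvature formula, and it is crucial that the particular potential $F^{(n+1)/n}$ has been chosen precisely so that the normal, angular, and tangential pieces of $g_m$ are in the correct proportion for the cross-terms to cancel to leading order, leaving the conformally rescaled tangential curvature as the dominant contribution.
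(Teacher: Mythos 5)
The warped-product model you set up is a correct and illuminating way to understand why the leading curvature is $F^{-1/n}$: the tangential block is conformally $F^{1/n}g_D$, so its intrinsic curvature rescales by $F^{-1/n}$, while the $\rho$- and $\theta$-directions contribute only $O(\rho^{-2}) = O(F^{-(n+1)/n})$. The paper does not introduce this model; it works directly from $\partial^2 g$, the Christoffel terms, and sharp bounds $|\xi^i|^2 = O(B^{-1/n})$ ($i<n$), $|\xi^n|^2 = O(|z_n|^2 B^{(n-1)/n})$ for $g_m$-unit vectors. Your framing is genuinely different and conceptually cleaner, and your identification of the dominant contribution matches the paper's formula~(\ref{curvtensor}).

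However, the perturbation step contains a real gap. You assert that ``every term is checked to contribute at an order strictly smaller than $F^{-1/n}$, using that the derivatives $\partial_k,\partial_{\bar l}$ applied either to $F^{1/n}$ or to smooth tangential data produce at most a single factor of $1/z_n$.'' This is false for the purely normal component $R_{n\bar n n\bar n}$: there the derivatives must also hit the normal block $g_{n\bar n} \sim F^{(1-n)/n}|z_n|^{-2}$, and $\partial_n\partial_{\bar n}g_{n\bar n}$ carries an $|z_n|^{-4}$ singularity. After pairing against $|\xi^n|^4 = O(|z_n|^4 B^{2(n-1)/n})$, the Hessian term $-\partial_n\partial_{\bar n}g_{n\bar n}$ alone is of order $B^{(n-1)/n}$, which is \emph{divergent}, not smaller than $B^{-1/n}$; the same is true for $g^{n\bar n}|\partial_n g_{n\bar n}|^2$. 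These two terms cancel at the two leading orders in $1/F$, leaving a residue of order $B^{-(n+1)/n}$ consistent with your model's $K_{\rho\theta}\sim\rho^{-2}$. You gesture at this cancellation in your final paragraph, but it contradicts the preceding ``every term is strictly smaller than $F^{-1/n}$'' claim, and the stated mechanism (``a single $1/z_n$ absorbed by $|z_n|^2$'') cannot account for the $|z_n|^{-4}$'s that actually appear. The paper handles exactly this point when it notes that the $|z_n|^{-1}$ factor in $\partial g_{i\bar v}/\partial z_k$ ``needs special attention'' and is ``compensated'' by the $\delta_{un}\delta_{vn}O(|z_n|^2)$ part of $g^{u\bar v}$. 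To complete your argument you would need to exhibit this cancellation explicitly rather than bound the two contributions separately.
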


\begin{proof}
We shall prove the statement in local coordinates, as follows.
There exists a finite covering $U_t$ of $D$ in $\Mbar$ such that for each $t$, 
there is a local uniformization $\Pi_t: \tilde{U_t} \rightarrow U_t$ such that 
${\Pi_t}^{-1}(D)$ is smooth in $ \tilde{U_t}$. The covering $U_t$ can,
in addition, be chosen so that given a local coordinate system
$(z_1, \dots, z_n)$ in  $ \tilde{U_t}$, with $S = z_n$ and $z' = (z_1, \dots, z_{n-1})$
defining coordinates along $D$, we have
\begin{multline}
\label{curvtensor}
\sum_{i,j,k,l = 1}^n R({\Pi_t}^{*}(g_m))_{i\bar{j}k\bar{l}}(z', z_n)\xi^i \xi^{\bar{j}}\xi^k %%@
\xi^{\bar{l}}
 = \\ =
 (-n\log|z_n|^2 )^{1/n} 
\sum_{i,j,k,l = 1}^n R({\Pi_t}^{*}(g_D|_{{\Pi_t}^{-1}(D)}))_{i\bar{j}k\bar{l}}
\xi^i \xi^{\bar{j}}\xi^k \xi^{\bar{l}} 
+ O((-n\log|z_n|^2 )^{-1/n} ),
\end{multline}
for every $g_m$-unit vector $(\xi^1, \dots, \xi^n)$, where $g_D$ is the  \kahler
metric defined by the restriction of the curvature form $\tilde{\om}$ to the divisor.

Without loss of generality, we assume that $U_t\cap\Mbar$ is smooth.

For every $x \in U_t\cap M$, consider local coordinates $(z_1, \dots, z_n)$
for a neighborhood of $x$ which satisfy the conditions below:
\begin{itemize}
\item 
The defining section $S$ of the divisor is given by $z_n$.
\item 
The curvature form $\tilde{\om}_m$ of $||.||_m$ is represented in the
mentioned coordinates by
the tensor $(h_{i\bar{j}})$ where $(h_{i\bar{j}})$
satisfies
$$
h_{i\bar{j}}(x) = \delta_{ij}; \hspace{0.5cm}
\frac{\partial h_{i\bar{j}}}{\partial z_k}(x)= 0 \hspace{.3cm}\text {if  $j<n$};
 \hspace{0.5cm}
\frac{\partial h_{i\bar{j}}}{\partial \bar{z_l}}(x)= 0 \hspace{.3cm} \text {if $i<n$};
$$
\item 
The hermitian metric  $||.||_m$ is represented by a positive funcion 
$a$ with $a(x)=1$, $da(x) = 0$ and $d(\frac{\partial a}{\partial z_k})(x) = 0$.
\end{itemize}
 
In order to simplify notation, let us write $B= B(|z_n|) = (-n\log|z_n|^2)$, and let
us drop the subscripts for the metric $g_m$ to be denoted by $g$
from now on.

Formula~(\ref{relate}) implies that 
$$
{g}^{i\jbar} (x ) =
\begin{cases}   
O(B^{-1/n}) & \text {if  $i=j$ and $ i<n$} \\ 
o(B^{-1/n})  & \text{if $i\neq j$ and  $i,j <n$} \\ 
O(|z_n|^2B^{-1/n}) &  \text{if $ i = j = n$},
\end{cases}
$$
and  computations give (check \cite{S} for further details)
$$
\frac{\partial g_{i\jbar}}{\partial z_k} =  
B^{1/n}
   \left[
          \frac{\partial h_{i\jbar}}{\partial z_k} - \frac{1}{z_nB}
           \left(
                  \de_{kn} h_{i\jbar} + \de_{in} h_{k\jbar} +  
                  \de_{in} \de_{jn} \de_{kn}
      \left( \frac{n-1}{B} + \frac{1}{|z_n|^2}   \right)
           \right)
   \right]
$$
and
\begin{multline}
\frac{\partial^2 g_{i\jbar}}{\partial z_k \partial \zbar_l} = 
B^{1/n}
\left\{
       \frac{\partial^2 h_{i\jbar}}{\partial z_k \partial \zbar_l}
%- \right. \\ -   \left.
-\frac{1}{\zbar_n B} 
      \left(\de_{ln} \frac{\partial h_{i\jbar}}{\partial z_k} + 
            \de_{jn} \frac{\partial h_{i\lbar}}{\partial z_k}    \right) 
%- \right. \\ -   \left.
-\frac{1}{z_n B} 
      \left(\de_{kn} \frac{\partial h_{i\jbar}}{\partial \zbar_l} + 
            \de_{in} \frac{\partial h_{k\jbar}}{\partial \zbar_l}    \right)
+  \right. \\ +  \left.
 \frac{1-n}{|z_n|^2 B^2}
      \left( \de_{ln}(\de_{kn}  h_{i\jbar} + \de_{in}  h_{k\jbar} )
+  
%\right. \right. \\ +  \left. \left.
\de_{jn}(\de_{kn}  h_{i\lbar} + \de_{in}  h_{k\lbar} )     \right)
\right. +\\ \left.
 + \frac{ \de_{in} \de_{jn} \de_{kn} \de_{ln}}{|z_n|^4 B^3}
     \left( |z_n|^2(n-1)(1-2n)+ (1-n)B +B^2 \right)
\right\} \, .
\end{multline}
If $(\xi^1, \dots,\xi^n )$ is a $g$-unit tangent vector, then 
$$
\begin{cases}
\label{estxi}
|\xi^i|^2 \leq C B^{-1/n}& \text {if  $i < n$, and} \\
|\xi^n|^2 \leq C |z_n|^2 B^{(n-1)/n}, & 
\end{cases}
$$
where $C$ is a constant that does not depend neither on 
the unit vector  $(\xi^1, \dots,\xi^n )$ nor on the point
$x \in D$.
Hence, in local coordinates, we have
\begin{multline}
\label{e.curvtensor}
 R(g)_{i\jbar k \lbar}(x)(\xi^i \bar{\xi}^j \xi^k \bar{\xi}^l) (x)
= 
\left[
\frac{\partial^2 g_{i\jbar}}{\partial z_k \partial \zbar_l} (x)
+ 
\sum_{u,v = 1}^n g^{u\vbar}(x)\frac{\partial g_{i\vbar}}{\partial z_k}(x)
\frac{\partial g_{u\jbar}}{\partial \zbar_l}(x)
\right](\xi^i \bar{\xi}^j \xi^k \bar{\xi}^l) = \\
 = 
-B^{1/n}
\left\{
\frac{\partial^2 h_{i\jbar}}{\partial z_k \partial \zbar_l} (x)
(\xi^i \bar{\xi}^j \xi^k \bar{\xi}^l)
 + 
\frac{4(1-n)|\xi^n|^2}{|z_n|^2B^2} \left(\sum_{i=1}^n |\xi^i|^2 \right)
\right. \\ \left.
+
\frac{|\xi^n|^4}{|z_n|^4B^3} \left( |z_n|^2(n-1)(1-2n)+ (1-n)B +B^2 \right)
\right\}+
\\
+ 
B^{2/n} \sum_{u,v = 1}^n g^{u\vbar}(x)
\left[
%\frac{\partial h_{i\vbar}}{\partial z_k} 
- \frac{\xi^n}{z_nB}
   \left(  2\xi^i h_{i\vbar} + \xi^n \de_{i\vbar}\left(\frac{n-1}{B} + \frac{1}{|z_n|^2} \right) 
           \right)
\right] \cdot 
\\ \cdot
\left[
%\frac{\partial h_{u\jbar}}{\partial \zbar_l} 
- \frac{\bar{\xi}^n}{z_nB}
   \left(  2\bar{\xi}^j h_{u\jbar} + \bar\xi^n \de_{u\nbar} \left(\frac{n-1}{B} + %%@
\frac{1}{|z_n|^2} \right) 
           \right)
\right]
\end{multline}

Next let us separately bound each of the above terms.
By using (\ref{estxi}) and 
our previous choice of local coordinates,
we obtain, when $z_n$ approaches zero,
%$$
% \leq \frac{C |\xi^n|}{|z_n|B} \leq  \frac{C |z_n|B^{(n-1)/2n}}{|z_n|B}
%= C B^{-(n+1)/2n}
%$$
$$
\frac{4(1-n)|\xi^n|^2}{|z_n|^2B^2} \left(\sum_{i=1}^n |\xi^i|^2 \right)
\leq  \frac{C |z_n|^2B^{(n-1)/n}}{|z_n|^2B^2} (B^{-1/n} + |z_n|^2B^{(n-1)/n})\leq
C B^{-(n+2)/n},
%(B^{-1/n} + |z_n|^2B^{(n-1)/n})
$$
where $C$ denotes a uniform constant.
Also, we have that
$$
\frac{|\xi^n|^4}{|z_n|^4B^3} \left( |z_n|^2(n-1)(1-2n)+ (1-n)B +B^2 \right)
%\leq  
%\frac{C |z_n|^4B^{(n-1)/n}}{|z_n|^4B^3} (B^{2} + (1-n)B +|z_n|^2(n-1)(1-2n))
%\leq 
%C B^{(n-1)/n}(|z_n|^2B^{-3} + B^{-2} + B^{-1})
\leq C B^{-1/n}(B^{-1} + 1) \leq  C B^{-1/n}.
$$
Now, notice that the expression
\begin{equation}
\frac{\xi^n}{z_nB}
   \left(  2\xi^i h_{i\vbar} + \xi^n \de_{i\vbar}\left(\frac{n-1}{B} + \frac{1}{|z_n|^2} \right) 
           \right)
 %&\leq&  C  \frac{|z_n|B^{(n-1)/2n}}{|z_n|B} 
%\left(
%B^{-1/n} + |z_n|B^{(n-1)/2n}\left( \frac{n-1}{B} + \frac{1}{|z_n|^2}\right)
%\right) \\ 
\leq
C(B^{-(n+2)/2n} + |z_n|^{-1}B^{-1/n}),
\end{equation}
needs special attention due to the presence of a term  involving $|z_n|^{-1}$.
However our  estimate  for 
$$
g^{u\vbar} = B^{-1/n}\left[(1-\de_{un}\de_{vn})O(1) + \de_{un}\de_{vn}O(|z_n|^2) \right]
$$
shows that this term is compensated  by the last term of the above expression.
The estimate for the decay of the last term in (\ref{e.curvtensor}) is analogous to the case
discussed above, and will be ommited. For details, please refer to \cite{S}.

In conclusion, we have 
$$
 R(g)_{i\jbar k \lbar}(x)(\xi^i \bar{\xi}^j \xi^k \bar{\xi}^l) (x)
= 
B^{1/n}
\frac{\partial^2 h_{i\jbar}}{\partial z_k \partial \zbar_l} (x)
(\xi^i \bar{\xi}^j \xi^k \bar{\xi}^l)
+ O(B^{-1/n}),
$$
which implies (\ref{curvtensor}), and concludes the proof of the lemma.
\end{proof}

The reader may also notice that
Lemma~\ref{l.curvature} completes the proof of Proposition~\ref{t.inductive}.

\medskip

\begin{lemma}
\label{l.derivcurv}
For the manifold $(V_m, \partial V_m, g_m)$ defined in 
Lemma~\ref{l.complete}, we have 
\begin{equation}
\label{e.derivcurv}
||\nabla^k R(g_m)||_{g_m} (x)= O \left( \rho_m^{-\frac{k+2}{n+1}}(x) \right),
\end{equation}
where $\rho_m(x)$ is the distance function from a fixed point associated to $g_m$.
\end{lemma}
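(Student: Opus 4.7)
The plan is to extend the pointwise computation carried out in the proof of Lemma~\ref{l.curvature} by induction on $k$, the case $k=0$ being precisely Lemma~\ref{l.curvature}. By Lemma~\ref{l.complete} we have $\rho_m \sim (-n\log\|S\|_m^2)^{(n+1)/(2n)} = B^{(n+1)/(2n)}$ near $D$, where $B = -n\log|z_n|^2$ in the local coordinates fixed in Lemma~\ref{l.curvature}; so the desired estimate is equivalent to
\[
\|\nabla^k R(g_m)\|_{g_m}(x) = O\bigl(B^{-(k+2)/(2n)}\bigr).
\]

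The key ingredients are already present in the proof of Lemma~\ref{l.curvature}. First, a $g_m$-unit tangent vector $(\xi^1,\dots,\xi^n)$ satisfies $|\xi^i|^2 = O(B^{-1/n})$ for $i<n$ and $|\xi^n|^2 = O(|z_n|^2 B^{(n-1)/n})$; so each contraction with a $g_m$-unit vector in a tangential slot contributes a factor $O(B^{-1/(2n)})$, while the factor $|z_n|$ in the normal slot cancels the singular $1/z_n$ that arises from differentiating $\log|z_n|^2$. Second, the inverse metric satisfies $g^{u\bar v} = B^{-1/n}\bigl[(1-\delta_{un}\delta_{vn})O(1) + \delta_{un}\delta_{vn}O(|z_n|^2)\bigr]$. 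Third, the explicit formulas for $\partial g_{i\bar j}$ and $\partial^2 g_{i\bar j}$ recorded in the proof of Lemma~\ref{l.curvature} show that each extra application of a partial derivative $\partial/\partial z_i$ (or $\partial/\partial\bar z_l$), after contraction with $g_m$-unit vectors, produces an additional factor of exactly $B^{-1/(2n)}$: the derivatives of $h_{i\bar j}$ and of $a$ remain $O(1)$, and every additional $1/z_n$ or $1/B$ factor is reabsorbed by a matching $|z_n|$ or $B^{1/(2n)}$ coming from the unit-vector scaling and from the contractions with $g^{u\bar v}$.

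Concretely, I would proceed as follows. Writing $\nabla^k R = \partial^k R + \sum (\text{products of } \Gamma^{\bullet}_{\bullet\bullet} \text{ and lower derivatives of } R)$ in the preferred frame at $x$, I bound each term individually by extending the expansion (\ref{e.curvtensor}): the leading contribution comes from $\partial^k$ applied to $\partial^2 h_{i\bar j}/\partial z_k \partial\bar z_l$ and is of order $B^{1/n}\cdot B^{-(k+4)/(2n)} = B^{-(k+2)/(2n)}$ after contraction with four $g_m$-unit vectors; the remaining terms, involving Christoffel symbols and derivatives of $\log|z_n|^2$, are estimated exactly as in Lemma~\ref{l.curvature} and are shown by induction to be at most of the same order. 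The induction hypothesis controls the lower derivatives of $R$, and the algebraic structure of the Christoffel symbols (already displayed in the formulas for $\partial g_{i\bar j}$) guarantees that combining them with $g^{u\bar v}$ and unit-vector contractions costs exactly one factor $B^{-1/(2n)}$ per covariant derivative.

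The main technical obstacle is the bookkeeping of the singular factors $1/z_n$ and $1/B$ that proliferate when differentiating the logarithmic term $(-n\log\|S\|_m^2)^{(n+1)/n}$ repeatedly: one must check at each stage that every such singular factor is compensated either by the $|z_n|^2$ in the normal component of $g^{n\bar n}$, by the $|z_n|$ in the normal component of a $g_m$-unit vector, or by a $B^{1/(2n)}$ arising from a tangential unit-vector factor. A conceptually cleaner way to package this bookkeeping, which I would use to double-check the estimate, is to introduce Tian--Yau quasi-coordinates near $D$ in which $g_m$ has bounded geometry of all orders; in those coordinates the curvature bound of Lemma~\ref{l.curvature} together with uniform control on all derivatives of the metric yields the announced decay immediately, since one $g_m$-covariant derivative in quasi-coordinates corresponds to a factor $\rho_m^{-1/(n+1)}$.
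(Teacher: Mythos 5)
Your proposal is correct in substance, but your \emph{primary} line of argument (inductive coordinate bookkeeping on top of Lemma~\ref{l.curvature}) is not the one the paper uses; the paper actually carries out what you mention only at the very end as a ``conceptually cleaner way to package this bookkeeping.'' Concretely, the paper pulls $g_m$ back under the diffeomorphism $\Psi : M_1 \times (0,\delta) \to V_\delta$ induced by the exponential map of $(\Mbar, ||.||_m)$ along $D$ (with $M_1$ the unit circle bundle of $L_D|_D$), writes
\[
\Psi^* g_m = \rho^{\frac{2}{n+1}} g(\cdot,\cdot) + \rho^{\frac{2(1-n)}{n+1}} d\bigl(\rho^{\frac{2}{n+1}}\bigr)\, h(\cdot,\cdot) + \rho^{\frac{2(1-2n)}{n+1}} d\bigl(\rho^{\frac{2}{n+1}}\bigr)^2\, u(\cdot,\cdot)
\]
with $g,h,u$ smooth families of tensors on $M_1$ whose derivatives of every order are uniformly bounded, and then invokes the conformal scaling law $||\tilde\nabla^k R(\lambda^2 g)||_{\lambda^2 g} = \lambda^{-(k+2)}||\nabla^k R(g)||_g$ applied to $\tilde g = \rho_m^{-2/(n+1)}\Psi^* g_m$. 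Because $\tilde g$ has bounded geometry of all orders on the cylinder, $||\tilde\nabla^k R(\tilde g)||_{\tilde g} = O(1)$, which is exactly (\ref{e.derivcurv}). This sidesteps entirely the proliferation of $1/z_n$ and $1/B$ singularities you would otherwise have to track by hand.

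Your primary approach is workable in principle: the key observation --- that every singular factor of $z_n^{-1}$ or $B^{-1}$ produced by differentiation is compensated by the unit-vector weights $|\xi^n| = O(|z_n| B^{(n-1)/(2n)})$, $|\xi^i| = O(B^{-1/(2n)})$ ($i<n$), and by the inverse-metric weights such as $g^{n\bar n} = O(|z_n|^2 B^{-1/n})$ --- is the right one. One small imprecision worth flagging: a derivative does not cost ``exactly'' $B^{-1/(2n)}$ after contraction. A tangential derivative does, but a normal derivative, which picks up a $(z_n B)^{-1}$ factor, costs $B^{-(n+1)/(2n)}$, which is \emph{more} decay than needed; so the bound holds, but the accounting is case-dependent, and you would additionally have to thread the Christoffel terms through the induction. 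The paper's cylindrical-model argument absorbs all of this into the single claim that $g(\cdot,\cdot)$, $h(\cdot,\cdot)$, $u(\cdot,\cdot)$ and their derivatives are uniformly bounded, which is why it is the route chosen (following Lemma~2.5 of Tian--Yau~II).
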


\begin{proof}
The proof of this lemma will follow the same idea as the proof of 
\cite{TY2}, Lemma 2.5.

We start by fixing $m$, and fixing a small $\de >0$ such that 
$V_\de := \{ x \in M ; ||S||_m < \de \} \subset V_m$.
In order to prove the result,  we are going to introduce a suitable
new coordinate system on $V_\de$.

Because $D$ is admissible, it follows that the 
total space of the unit sphere bundle of $L_D|_D$
(with respect to the metric $||.||_m$)
is a smooth manifold of real dimension $2n+1$, to be denoted by $M_1$.

Since $L_D$ is simply the normal bundle of $D$ in $\Mbar$, 
there exists a diffeomorphism 
$$
\Psi:  M_1 \times (0, \de) \rightarrow V_\de 
$$ 
induced by the exponential map of $(\Mbar, ||.||_m)$ along $D$.

It is also known that the \kahler form of $g_m$ is given by
$$
\om_m = \frac{\sqrt{-1}}{2\pi} \frac{n^{1 + 1/n}}{n+1}  \partial \bar{\partial}
(-\log(||S||^2 e^{2\phi_m}))^{ \frac{n+1}{n}},
$$
where $\phi_m$ is a smooth function on $\Mbar$, that can be written as
$\sum_{\kappa \geq m} \sum_{\ell =0}^{\ell_\kappa} u_{\kappa\ell}(-\log||S||^2_k)^\ell$,
where $u_{\kappa\ell}$ are smooth functions on $\overline{V}_m$ that vanish to order 
$\kappa$ on $D$.

Combining the facts above, the pull-back of $g_m$ under $\Psi$ on
$M_1 \times (0, \de)$ is given by
\begin{eqnarray}
\label{e.pullbackg3}
\nonumber
\Psi^*g_m &=& 
(-n \log(||S||^2))^{\frac{1}{n}} 
g(||S||, ||S|| \log(||S||)) 
+ \\ 
\nonumber
&+& 
(-n \log(||S||^2))^{\frac{1-n}{n}} d\left( (-n \log(||S||^2))^{\frac{1}{n}}
\right)h(||S||, ||S|| \log(||S||)) 
+  \\ &+& 
(-n \log(||S||^2))^{\frac{1-2n}{n}} d\left( (-n \log(||S||^2))^{\frac{1}{n}} \right)^2 
u(||S||, ||S|| \log(||S||)) \, .
\end{eqnarray}
Here $g(.,.)$ (resp. $h(.,.)$, $u(.,.)$) is a $C^\infty$ family of metrics
(resp. $1-$tensors, functions)
on $M_1$, such that for each fixed integer $\ell>0$, there exists a constant 
$K_\ell$ that bounds all covariant derivatives (with respect to a fixed metric 
$\tilde{h}$ on $M_1$) of $g(t_0,t_1)$ (resp. $h(t_0,t_1)$, $u(t_0,t_1)$) 
up to order $\ell$, for every $t_0 \in [0, \de]$ and $t_1 \in [0, \de \log(\de)]$.  

Setting $\rho = (-n \log(||S||^2))^{\frac{n+1}{2n}},$ (\ref{e.pullbackg3}) 
becomes 
\begin{equation}
\label{e.pullbackg3gamma}
\Psi^*g_m = \rho^{\frac{2}{n+1}}g(.,.)+ \rho^{\frac{2(1-n)}{n+1}}d(\rho^{\frac{2}{n+1}})h(.,.)
+  \rho^{\frac{2(1-2n)}{n+1}}d(\rho^{\frac{2}{n+1}})^2u(.,.),
\end{equation}
and hence we can regard $\Psi^*g_m$ as being a metric defined on 
$M_1 \times \left((-n \log\de^2 )^{\frac{n+1}{2n}}, \infty \right)$.

It is easy to see that, if $g$ is any Riemannian metric, and 
$\tilde g = \lambda^2 g$ is a rescaling of $g$, then 
the norm of the covariant derivatives of the new metric, taken with 
respect to the new metric $\tilde g$, satisfy 
$||\tilde{\nabla}^{k} R(\tilde g)||_{\tilde g} = \lambda^{-(k+2)} ||\nabla^k R(g)||_g$.
Therefore, if we define a new metric $\tilde g$ on 
$M_1 \times \left((-n \log\de^2 )^{\frac{n+1}{2n}}, \infty \right)$ by 
$\tilde g = \rho_m^{-\frac{2}{n+1}}(x) \Psi^*g_m $,  showing  
(\ref{e.derivcurv}) is equivalent to showing that 
$||\tilde{\nabla}^{k} R(\tilde g)||_{\tilde g} (x)  = O(1)$.

But this last statement follows clearly from the assertions on 
$g(.,.), h(.,.)$ and $u(.,.)$.
\end{proof}

%The following result is a direct consequence of Lemma~\ref{l.complete}
%combined to Lemma~\ref{l.derivcurv}.

%\begin{cor}
%\label{l.corcurv}
%Consider the complete \kahler manifolds (with boundaries) $(V_m, \partial V_m, g_m)$ 
%constructed in Lemma~\ref{l.curvature}.
%Then the norm of the curvature tensor $R(g_m)$ 
%with respect to the metric $g_m$ decays at the order of
%$\rho^{-\frac{2}{n+1}}$, where 
%$\rho$ is 
%any distance function from a fixed point 
%in $V_m$ near $D$.
%\end{cor}

In the same lines of Lemma~\ref{l.derivcurv}, one can show

\begin{lemma}
\label{l.derivfm}
For a fixed $m$, let  $f_m$ be the smooth function constructed in Section~\ref{s.approx}.
Then,
\begin{equation}
\label{e.derivfm}
||\nabla^k f_m||_{g_m} = O(||S||_m^{m+1} \rho_m^{-\frac{k}{n+1}}),
\end{equation}
where we recall that $\rho_m(.)$ is the distance function associated to 
the metric $g_m$.
\end{lemma}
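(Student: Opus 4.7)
My plan is to imitate the proof of Lemma~\ref{l.derivcurv}, exploiting the same cylindrical model and conformal rescaling. Concretely, I would use the diffeomorphism $\Psi : M_1 \times (0,\de) \to V_\de$ and the radial coordinate $\rho = (-n\log\|S\|_m^2)^{(n+1)/(2n)}$, and work on the cylinder $M_1 \times ((-n\log\de^2)^{(n+1)/(2n)},\infty)$ equipped with the rescaled metric $\tilde g_m = \rho^{-2/(n+1)}\Psi^{*} g_m$. By the expression~(\ref{e.pullbackg3gamma}), $\tilde g_m$ has uniformly bounded geometry in the sense that its curvature and all its covariant derivatives are $O(1)$; moreover $\log\rho$ and its $\tilde g_m$-derivatives are also uniformly controlled on the cylinder.

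Next I would transport the expansion~(\ref{expansionfm}) of $f_m$ to the cylinder. Using that $(-\log\|S\|_m^2) = \rho^{2n/(n+1)}/n$, the pullback reads
\[
\Psi^{*} f_m \;=\; \sum_{k\ge m+1}\sum_{\ell=0}^{\ell_k} (\Psi^{*} u_{k\ell})(y,\rho)\,\frac{\rho^{2\ell n/(n+1)}}{n^\ell},
\]
where each $\Psi^{*} u_{k\ell}$ is smooth on $M_1\times[0,\de)$ and vanishes to order $k\ge m+1$ in the radial variable $\|S\|_m \sim \exp(-\rho^{2n/(n+1)}/(2n))$. Differentiating in the cylindrical coordinates $(y,\rho)$ and estimating in the $\tilde g_m$-metric (which on the cylinder is uniformly comparable to the flat product metric), I would obtain
\[
\|\tilde\nabla^k (\Psi^{*} f_m)\|_{\tilde g_m} \;=\; O\!\big(\|S\|_m^{m+1}\,(1+\rho)^{N_k}\big),
\]
for a constant $N_k$ depending only on $k$; the polynomial factor $(1+\rho)^{N_k}$ comes from differentiating the $\rho^{2\ell n/(n+1)}$ terms. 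Since $\|S\|_m$ decays like $e^{-\rho^{2n/(n+1)}/(2n)}$, this polynomial factor is absorbed by any positive power of $\|S\|_m$.

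Finally, I would convert back using the conformal relation $g_m = \rho^{2/(n+1)}\tilde g_m$. The standard transformation law for covariant derivatives of a function under a conformal change gives
\[
\|\nabla^k f_m\|_{g_m} \;=\; \rho^{-k/(n+1)}\,\|\tilde\nabla^k(\Psi^{*}f_m)\|_{\tilde g_m} \;+\; (\text{lower order corrections}),
\]
the corrections involving $\tilde g_m$-derivatives of $\log\rho$ (all $O(1)$), and hence obeying the same bound. Combining with the previous step yields $\|\nabla^k f_m\|_{g_m} = O(\|S\|_m^{m+1}\,\rho_m^{-k/(n+1)})$ as claimed.

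The main technical nuisance is the conformal factor $\rho$ being non-constant, so the simple scaling identity for derivatives under a conformal change is only approximate; this is handled exactly as in Lemma~\ref{l.derivcurv} by noting that $\log\rho$ has bounded $\tilde g_m$-derivatives. A secondary point is keeping track of the polynomial-in-$\rho$ factors introduced by differentiating the $(-\log\|S\|_m^2)^{\ell}$ terms, but these are immediately subsumed into the exponentially small factor $\|S\|_m^{m+1}$, so no sharpness is lost.
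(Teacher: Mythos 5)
Your reduction to the cylinder $M_1\times(\cdot,\infty)$ with the rescaled metric $\tilde g_m=\rho^{-2/(n+1)}\Psi^*g_m$ is exactly the one the paper makes, and your conformal-scaling bookkeeping ($\|\nabla^k f\|_{g_m}=\rho^{-k/(n+1)}\|\tilde\nabla^k f\|_{\tilde g_m}$ up to lower-order corrections from $\tilde\nabla\log\rho$) is correct. From that point on, however, you take a genuinely different route. The paper does \emph{not} differentiate the log-expansion of $f_m$; instead it invokes the Ricci relation $\Ric(g_m)-\Om=\frac{\sqrt{-1}}{2\pi}\partial\bar\partial f_m$, together with the curvature derivative bounds from Lemma~\ref{l.derivcurv} and the structural form~(\ref{e.pullbackg3}) of $\Psi^*g_m$, and then declares the estimate to follow. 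Your alternative is to push forward the expansion~(\ref{expansionfm}) term by term, use bounded geometry of $\tilde g_m$ to replace $\tilde\nabla$-norms by coordinate-derivative bounds, and observe that each derivative of $\Psi^*u_{k\ell}\cdot\rho^{2\ell n/(n+1)}$ only costs a polynomial factor in $\rho$, which is then swallowed by the exponentially small $\|S\|_m^{m+1}$.

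There are two things worth flagging. First, your approach actually controls \emph{all} orders $k\ge 0$, including $k=0,1$, directly, whereas the PDE route as written only sees $\partial\bar\partial f_m$; and it also makes transparent where the $\|S\|_m^{m+1}$ factor comes from, which the curvature estimates of Lemma~\ref{l.derivcurv} alone would not supply (they give only polynomial decay in $\rho$). In that sense your argument is cleaner and fills in the logic the paper compresses into ``the observation on the local expression of the metric''. Second, you should be slightly more careful about the final absorption step: what you literally get is $\|\tilde\nabla^k(\Psi^*f_m)\|_{\tilde g_m}=O\bigl(\|S\|_m^{m+1}(1+\rho)^{N_k}\bigr)$, and ``absorbed by any positive power of $\|S\|_m$'' yields $O(\|S\|_m^{m+1-\epsilon})$, not the literal $O(\|S\|_m^{m+1})$ of the statement. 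This imprecision is already present in the lemma itself (the $k=0$ case $f_m\sim\|S\|_m^{m+1}(-\log\|S\|_m^2)^{\ell}$ is not literally $O(\|S\|_m^{m+1})$), so it is not an error in your argument, but it is worth stating explicitly that the bound holds up to logarithmic factors, or equivalently for every exponent strictly less than $m+1$.
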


\begin{proof}
We will use the same notation and objects defined on the proof
of Lemma~\ref{l.derivcurv}.

Begin by observing that (\ref{e.derivfm}) is equivalent to showing that
\begin{equation}
\label{e.derivcurvmod}
||\tilde{\nabla}^k f_m||_{\rho_m^{-\frac{2}{n+1}}\Psi^*g_m} (\Psi^{-1}(x))
= O_k(1)(||S||_m^{m+1})(x),
\end{equation}
where $0_k(1)$ is a quantity bounded by a constant that depends on $k$, and
$\tilde\nabla$ is the covariant derivative associated to the metric 
$\rho_m^{-\frac{2}{n+1}}\Psi^*g_m$ on $M_1 \times (0, \de)$.

Also, recall that the function $f_m$ satisfies 
$\Ric(g_m) - \Om  = \frac{\sqrt{-1}}{2\pi} \partial \bar \partial f_m$ on 
$V_\de$ , for a fixed $\Om \in c_1(-K_{\Mbar} \otimes -L_D)$.

Hence, (\ref{e.derivcurvmod}) follows clearly from the estimates on 
the covariant derivatives of the curvature tensor $R(\rho_m^{\frac{-2}{n+1}}\Psi^*g_m)$
given by Lemma~\ref{l.derivcurv}, and the observation on the local expression
of the metric $\rho_m^{\frac{-2}{n+1}}\Psi^*g_m$. 

\end{proof}

\bigskip

We are finally able to prove Theorem~\ref{t.approximatingmetrics}.

\noindent {\bf Proof of Theorem~\ref{t.approximatingmetrics}:} In what follows
we keep the preceding setting and notations.

Since the divisor $D$ is assumed to be ample in $\Mbar$, there exists a 
hermitian metric $||.||'$ on $L_D$ whose curvature form ${\tilde\om}'$ is 
positive definite on $D$.

Fix an integer $k \geq \ve$, and
write, for $\ve >0$,
\begin{equation}
\label{e.omegag}
\om_{g_\ve} = \om_k + C_\ve \frac{\sqrt{-1}}{2\pi}\partial \bar\partial(-||S||')^{\ve},
\hspace{2cm} C_\ve> 0,
\end{equation}
 where $\om_k$ is the 
\kahler form defined on Section~2.
The \kahler form $\om_{g_\ve}$ is positive definite on $M$, and gives rise to a  
complete \kahler metric $g_\ve$ on $M$.

Let $\de >0$ be such that $V_\de = \{ ||S(x)|| <  \delta\} \subset V_k$. 
On $V_\de$, $\om_{g_\ve}$ satisfies
$\text{Ric}(g_\ve) - \Om = \frac{\sqrt{-1}}{2\pi}\partial \bar{\partial}f$, and we want 
to estimate the decay of $f$ at infinity.

On the other hand, on  $V_\de$, we have 
$\text{Ric}(g_k) - \Om = \frac{\sqrt{-1}}{2\pi}\partial \bar{\partial}f_k$ which,
in turn, implies that
\begin{eqnarray}
\label{e.relationfandfm}
\nonumber
f  = f_k - \log \frac{\om_{g_\ve}^n}{\om_k^n} &=& f_k - 
\log \left(\frac{w^n_k + C_\ve ||S||'^{(\ve -1)} 
w^{n-1}_k \wedge \frac{n\sqrt{-1}}{2\pi}(D'S\wedge\bar{D}'\bar{S})}{\om_k^n}\right)
=\\ &=& f_k-
C_\ve ||S||'^{(\ve -1)} ||D'S||_{g_k},
\end{eqnarray}
where $D'S$ denotes the covariant derivative of the metric $||.||'$.
Hence, in order to estimate the decay of $f$, it suffices to study the decay of 
$ ||D'S||_{g_k}$.

In what follows, we will use  the notation defined in the proof of 
Lemma~\ref{l.derivcurv}: recall that $\Psi$ is the map between a tubular 
neighborhood of $D$ inside $\Mbar$ induced by the exponential map of $||.||$ 
along $D$.

Let the function $\ga := \Psi^*(||S||')^{\ve}$ be defined on 
$M_1 \times\left((-n \log\de^2 )^{\frac{n+1}{2n}}, \infty \right) $.
Our goal is to understand the decay of $ ||D'S||_{g_k}$, which is equivalent to
studying the decay of 
$|\tilde{\nabla} \ga|_{ \rho^{-2/(n+1)}\Psi^*g_k}(\Psi^{-1}(x))$, where 
$\tilde{\nabla} $ denotes the covariant derivative of the metric 
$\rho^{-2/(n+1)}\Psi^*g_k$.

Notice that on $M_1 \times\left((-n \log\de^2 )^{\frac{n+1}{2n}}, \infty \right) $,
the function $\ga$ can be written under the form
$$
e^{\tilde\ga}(., \exp\{\rho^{\frac{2n}{n+1}}\})\exp\{\frac{\ve}{n}\rho^{\frac{2n}{n+1}}\} ,
$$
where $\tilde\ga$ is a smooth function on
$M_1 \times\left((-n \log\de^2 )^{\frac{n+1}{2n}}, \infty \right)$
with all derivatives bounded in terms of a fixed product metric.

Hence, from (\ref{e.pullbackg3gamma}), it follows that 
\begin{equation}
\label{e.weirdO}
|\tilde{\nabla} \ga|_{ \rho^{-2/(n+1)}\Psi^*g_k}(\Psi^{-1}(x))
= O (\exp\{\frac{\ve}{n}\rho^{\frac{2n}{n+1}}\}),
\end{equation}
since the curvature tensor of $\rho^{-2/(n+1)}\Psi^*g_k$
is bounded near $\Psi^{-1}(x)$.

Note also that (\ref{e.weirdO}) is equivalent to 
$$
||D'S||_{g_k} = O(||S||^{\ve}), 
$$
which shows that the metric $g_\ve$, with corresponding \kahler
form $\om_{g_\ve}$ defined by (\ref{e.omegag}), satisfies the equation
$\Ric(g_\ve)- \Om = \partial\bar\partial f_\ve$, for $f_\ve$ a smooth function
that decays at least to the order of $O(||S||^{\ve})$. 

To close the proof of Theorem~(\ref{t.approximatingmetrics}), it only remains 
to note that the curvature estimates for the new metric $g_\ve$
follow easily from the estimates on the curvature tensor $R(g_m)$, 
described in Lemma~\ref{l.curvature}.\qed

\section{Asymptotics of the Monge-Amp\`ere Equation on $M$}
\label{s.MA}

This last section is intended to provide the proof of
Theorem~\ref{t.mainthm}.

Let $(M, g)$ be a complete \kahler manifold, with \kahler form 
$\om$.
Consider the following Monge-Amp\`ere equation on $M$:
\begin{equation}
\label{MA}
\begin{cases}
\left(
\om + \frac{\sqrt{-1}}{2\pi}\partial\bar\partial u
\right)^n  = e^f \om^n, & \\
\om + \frac{\sqrt{-1}}{2\pi}\partial\bar\partial u >0,
& \text{$u \in C^\infty(M, \RR)$},
\end{cases}
\end{equation}
where $f$ is a given smooth function
satisfying the integrability 
condition
\begin{equation}
\label{intcondition}
\int_M (e^f-1)w^n = 0.
\end{equation}

Whenever $u$ is a solution to (\ref{MA}), the $(1,1)$-form
$\om + \frac{\sqrt{-1}}{2\pi}\partial\bar\partial u$ satisfies
$Ric(\om + \frac{\sqrt{-1}}{2\pi}\partial\bar\partial u) = f$.
So, in order to define metrics with prescribed Ricci curvature,
it is enough to determine a solution to (\ref{MA}).

In \cite{TY1}, Tian and Yau proved that (\ref{MA}) has, in fact,
solutions modulo assuming certain conditions on the volume growth of $g$ as well
as on the decay of $f$ at infinity. For the convenience of the reader, we
state here their main result.

\begin{thm} {\bf(Tian, Yau, \cite{TY1})}
\label{t.TY1}
Let $(M, g)$ be a complete \kahler manifold, satisfying:
\begin{itemize}
\item
The sectional curvature of the metric $g$ is bounded by a constant $K$.
\item 
$\text{Vol}_g(B_R(x_0))\leq CR^2$ for all $R>0$ and 
$\text{Vol}_g(B_1(x_0)) \geq C^{-1} (1+ \rho(x))^{-\beta}$, for 
a constant $\beta$, where $\text{Vol}_g$ denotes the volume associated 
to the metric $g$, $B_R(x_0)$ is the geodesic ball of radius $R$ about
a fixed point $x_0 \in M$, and $\rho(x)$ denotes the distance (with respect to $g$)
from $x_0$ to $x$.
\item
There are positive numbers $r>0$, $r_1>r_2>0$ such that for every
$x\in M$, there
exists a holomorphic map $\phi_x : \cU_x \subset (\CC^n, 0) \rightarrow B_r(x)$ such that 
$\phi_x(0)= x$. Here one has $B_{r_2} \subset \cU_x\subset  B_{r_1}$, 
where $B_r:= \{ z \in \CC^n; |z|\leq r \}$. Furthermore $\phi_x^*g$ is a \kahler metric
in $\cU_x$ whose metric tensor has derivatives up to order $2$ bounded and 
$1/2$-H\"older-continuously bounded.
\end{itemize}

Let $f$ be a smooth function, satisfying the integrability condition (\ref{intcondition})
and such that 
\begin{equation}
\sup_M \{ |\nabla_g f|, |\Delta_g f| \} \leq C \hspace{2cm} |f(x)| \leq C(1+\rho(x))^{-N},
\end{equation}
for some constant $C$, for all $x$ in $M$, where $N\geq 4+2\beta$.

Then there exists a bounded, smooth solution $u$ for (\ref{MA}), such that 
$\om  + \frac{\sqrt{-1}}{2\pi}\partial\bar\partial u$ defines a complete 
\kahler metric equivalent to $g$.
\end{thm}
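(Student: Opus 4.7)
The plan is to apply the continuity method. For $t\in[0,1]$, consider the family of complex Monge--Amp\`ere equations
\begin{equation*}
\left(\om + \tfrac{\sqrt{-1}}{2\pi}\partial\bar\partial u_t\right)^n = e^{tf}\om^n,
\end{equation*}
and set $T := \{\,t\in[0,1] : \text{a bounded smooth solution } u_t \text{ exists with } \om+\tfrac{\sqrt{-1}}{2\pi}\partial\bar\partial u_t \text{ uniformly equivalent to } \om\,\}$. Since $u_0\equiv 0$ works at $t=0$, $0\in T$, and the theorem reduces to showing $T$ is both open and closed in $[0,1]$. For openness at $t_0\in T$, I would invoke the implicit function theorem in weighted H\"older spaces $C^{k,\alpha}_\de(M)$ modeled on the quasi-coordinate atlas $\{\phi_x\}$ supplied by hypothesis~(iii); the quasi-coordinates provide the uniform local geometry that lets one even define such Banach spaces on the non-compact manifold $M$. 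The linearization of $u\mapsto \log(\om_u^n/\om_{u_{t_0}}^n)$ at $u_{t_0}$ is $\tfrac12\Delta_{\om_{u_{t_0}}}$, and the weighted Sobolev inequality derived from the volume hypotheses makes this Laplacian an isomorphism between the chosen spaces.

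For closedness the key is to establish uniform a priori estimates on $u_t$ independent of $t$: a uniform $C^0$ bound, a uniform upper bound on $\Delta_\om u_t$ (which combined with the equation yields a positive lower bound on $\om_{u_t}/\om$), and higher-order Schauder estimates. The second and third can be handled by adapting the Aubin--Yau second-order computation and the Evans--Krylov--Schauder theory: in each quasi-coordinate chart $\phi_x$ the equation becomes a uniformly elliptic complex Monge--Amp\`ere equation on a fixed Euclidean ball with uniformly bounded coefficients, so classical interior estimates apply with constants independent of $x$. Yau's maximum principle for the $\Delta_\om u_t$ bound is executed via his generalized maximum principle for complete manifolds of bounded sectional curvature, guaranteed by hypothesis~(i); the $C^0$ bound established below ensures that the relevant test quantity attains its supremum in the interior.

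The main obstacle, as always for Monge--Amp\`ere equations on non-compact manifolds, is the uniform $C^0$ estimate. My plan is a Moser iteration tailored to the geometry at infinity. Rewrite the equation as $\om_{u_t}^n-\om^n=(e^{tf}-1)\om^n$, expand the left side via the algebraic identity
\begin{equation*}
\om_{u_t}^n-\om^n \,=\, \tfrac{\sqrt{-1}}{2\pi}\partial\bar\partial u_t\wedge\sum_{k=0}^{n-1}\om_{u_t}^k\wedge\om^{n-1-k},
\end{equation*}
multiply by $|u_t|^{p-1}\mathrm{sgn}(u_t)\chi_R$ for a cutoff $\chi_R$ supported on $B_R(x_0)$, and integrate by parts. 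The resulting left side dominates $\int|\nabla|u_t|^{p/2}|^2\om^n$ modulo lower-order terms, while the right side is estimated via H\"older together with the decay $|f|\leq C(1+\rho)^{-N}$ and the volume growth $\text{Vol}_g(B_R)\leq CR^2$, yielding a finite bound precisely when $N\geq 4+2\beta$ --- this is where the hypothesis on $N$ enters. Combining this with the Sobolev inequality on $(M,g)$ (itself a consequence of the volume bounds) and iterating on $p$ yields $\|u_t\|_{L^\infty(M)}\leq C$; the passage $R\to\infty$ in the cutoffs is justified using the same decay of $f$. Once all estimates are assembled, Arzel\`a--Ascoli produces a solution at any $t_\infty\in\overline{T}$, so $T=[0,1]$ and $u:=u_1$ solves the original equation; the $C^0$ and $C^2$ estimates together yield the completeness and equivalence to $g$ claimed in the theorem.
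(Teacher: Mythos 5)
This statement is quoted verbatim from Tian--Yau \cite{TY1}; the paper under review gives no proof of it (it is imported as a black box, and the only glimpse of its proof that the paper uses is that the solution is obtained as the $\ve\to 0$ limit of solutions to the perturbed equations $(\om+\frac{\sqrt{-1}}{2\pi}\partial\bar\partial u)^n=e^{f+\ve u}\om^n$, see the proof of Proposition~\ref{p.uvanishes}). So the comparison must be with the argument in \cite{TY1} itself, and there your sketch diverges in a way that matters: Tian and Yau do not run a continuity path in $t$ through $e^{tf}$; they perturb the equation by the zeroth-order term $\ve u$. This is not a cosmetic difference. Your openness step requires $\Delta_{\om_{u_{t_0}}}$ to be an isomorphism between weighted H\"older spaces on a complete non-compact manifold whose volume growth is only $\mathrm{Vol}(B_R)\leq CR^2$; such a manifold is at best borderline parabolic, the Laplacian has no bounded inverse on the natural unweighted spaces, and you give no construction of weights for which the isomorphism holds. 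The whole point of the $\ve u$ term is that the linearization becomes $\frac12\Delta-\ve$, which is invertible for trivial reasons, so that openness is free and all the difficulty is concentrated in estimates uniform in $\ve$. As written, your openness step is a genuine gap, not a routine appeal to the implicit function theorem.

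The second gap is in the $C^0$ estimate. You assert that the Sobolev inequality on $(M,g)$ is ``a consequence of the volume bounds.'' It is not: a global Euclidean-type Sobolev inequality with exponent $2n$ forces $\mathrm{Vol}(B_R)\gtrsim R^{2n}$, which is incompatible with the hypothesis $\mathrm{Vol}(B_R)\leq CR^2$ for $n>1$. Tian and Yau get around this by working locally: the quasi-coordinate hypothesis (your condition (iii)) gives uniform local Sobolev constants on the balls $B_r(x)$, and the lower volume bound $\mathrm{Vol}(B_1(x))\geq C^{-1}(1+\rho(x))^{-\beta}$ is what converts local integral control into pointwise control with a polynomial loss --- this is exactly where $\beta$ enters and why the threshold on $N$ is $4+2\beta$ rather than something depending only on the volume growth exponent. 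Your Moser iteration as described would stall at the first step for lack of a global Sobolev inequality. The remaining ingredients you list (Aubin--Yau second-order estimate via the generalized maximum principle under bounded curvature, Evans--Krylov--Schauder in the quasi-coordinate charts, Arzel\`a--Ascoli at the end) are indeed the ones used in \cite{TY1} and are correctly placed.
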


An interesting question 
addressed to by Tian and Yau in the same paper is that whether or not
the resulting metric
is asymptotically as close to $g$ as possible modulo
assuming further conditions on the decay of $f$. 
We provide an answer to this problem in the remainder of this paper.

We are interested in studying the Monge-Amp\`ere equation (\ref{MA}) for the 
\kahler manifold $(M, \om_{g_\ve})$ constructed in Section~4. 
More precisely, given $\ve >0$, we want to understand the asymptotic behavior
of a solution $u$ to the problem
\begin{equation}
\label{MAm}
\begin{cases}
\left(
\om_{g_\ve} + \frac{\sqrt{-1}}{2\pi}\partial\bar\partial u
\right)^n  = e^{f_{g_\ve}} \om_{g_\ve}^n, & \\
\om_{g_\ve} + \frac{\sqrt{-1}}{2\pi}\partial\bar\partial u >0,
& \text{$u \in C^\infty(M, \RR)$}.
\end{cases}
\end{equation}

To guarantee the existence of a solution to (\ref{MAm}), 
we need to check that the function $f_{g_\ve}$ (defined on
Theorem \ref{t.approximatingmetrics}) satisfies the integrability condition 
(\ref{intcondition}).

\begin{lemma} 
There exists a number $\lambda > 0$ such that, by replacing 
$\phi$ by $\phi + \la$ in   
Definition (\ref{fphi}) of $f_\phi$, we have
\begin{equation}
\label{intconditionlemma}
\int_M (e^{f_{g_\ve}}-1)\om_{g_\ve}^n = 0.
\end{equation}
\end{lemma}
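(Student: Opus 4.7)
The plan rests on two pointwise invariances under the shift $\phi\mapsto\phi+\lambda$, after which the integrability integral becomes an affine function of $\lambda$ with a unique root. First, from Definition~\ref{fphi} one reads off directly
$$
e^{f_\phi}\,\om_\phi^n \;=\; \frac{e^{-\Psi}\,\om'^{\,n}}{\|S\|^2},
$$
which is manifestly independent of $\phi$; geometrically this reflects the fact that the Ricci form of this $(n,n)$-form is the prescribed $\Om$. Second, writing $\om_\phi = \alpha_\phi^{1/n}\tilde\om_\phi + \alpha_\phi^{-(n-1)/n}\tfrac{\sqrt{-1}}{2\pi}\partial\log\|S\|_\phi^2\wedge\bar\partial\log\|S\|_\phi^2$ as in (\ref{relate}) and expanding the $n$-th power (the second summand has rank one), one obtains
$$
\om_\phi^n \;=\; \alpha_\phi\,\tilde\om_\phi^n \,+\, n\,\tilde\om_\phi^{n-1}\wedge\tfrac{\sqrt{-1}}{2\pi}\partial\log\|S\|_\phi^2\wedge\bar\partial\log\|S\|_\phi^2.
$$
Under $\phi\mapsto\phi+\lambda$ the curvature $\tilde\om_\phi$ is unchanged (a constant shift of the potential contributes nothing) and $\partial\log\|S\|_\phi^2=\partial\log\|S\|^2-\partial\phi$ is unchanged as well, so only $\alpha_\phi$ shifts by $n\lambda$, yielding
$$
\om_{\phi+\lambda}^n \;=\; \om_\phi^n + n\lambda\,\tilde\om_\phi^n.
$$

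Combining the two invariances gives the pointwise identity
$$
(e^{f_{\phi+\lambda}}-1)\,\om_{\phi+\lambda}^n \;=\; (e^{f_\phi}-1)\,\om_\phi^n \,-\, n\lambda\,\tilde\om_\phi^n.
$$
I would integrate this over the exhaustion $M_R=\{\|S\|>R\}$ and let $R\to 0$. The left-hand side and the first term on the right converge because of the decay $e^{f_\phi}-1=O(\|S\|^\ve)$ supplied by Theorem~\ref{t.approximatingmetrics}, combined with the singular behavior $\om_\phi^n\asymp \|S\|^{-2}\,d\mu$ near $D$: their product is $O(\|S\|^{\ve-2})\,d\mu$, whose integral converges. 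For the last term, $\tilde\om_\phi$ extends smoothly across $D$ to all of $\Mbar$ and represents $c_1(L_D)$, so $\int_\Mbar\tilde\om_\phi^n = c_1(L_D)^n[\Mbar]>0$ by ampleness. In the limit,
$$
\int_M(e^{f_{\phi+\lambda}}-1)\,\om_{\phi+\lambda}^n \;=\; I_\phi \,-\, n\lambda\,c_1(L_D)^n[\Mbar], \qquad I_\phi:=\int_M(e^{f_\phi}-1)\,\om_\phi^n.
$$
This is a nondegenerate affine function of $\lambda$, so setting it equal to zero forces the unique value $\lambda = I_\phi /\bigl(n\,c_1(L_D)^n[\Mbar]\bigr)$, which is exactly what (\ref{intconditionlemma}) requires.

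The main obstacle I anticipate is the careful justification of the exhaustion: verifying that $I_\phi$ is finite (the $O(\|S\|^\ve)$ decay of $e^{f_\phi}-1$ is just strong enough) and that no boundary contribution at $\|S\|=R$ survives in the limit for any of the three summands. A subordinate point is the sign $\lambda>0$ claimed in the statement: the construction above yields a unique real $\lambda$ whose sign is that of $I_\phi$, and I would interpret the claim as asserting the existence of a real constant $\lambda$, which is what the subsequent application of Theorem~\ref{t.TY1} actually uses.
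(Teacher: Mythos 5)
Your argument follows essentially the same route as the paper: both proofs hinge on the pointwise invariance $e^{f_\phi}\om_\phi^n=e^{-\Psi}\om'^n/\|S\|^2$ under the shift $\phi\mapsto\phi+\lambda$, the affine identity $\om_{\phi+\lambda}^n=\om_\phi^n+n\lambda\,\tilde\om_\phi^n$ (your sign is the correct one; the paper's displayed formula has a sign typo, though its final integral identity agrees with yours), and the nonvanishing of $\int\tilde\om_\phi^n$ by ampleness. The only step you skip is the initial reduction from $\int_M(e^{f_{g_\ve}}-1)\om_{g_\ve}^n$ to $\int_M(e^{f_\phi}-1)\om_\phi^n$, which the paper handles via $e^{f_{g_\ve}}\om_{g_\ve}^n=e^{f_\phi}\om_\phi^n$ together with $\int_M(\om_{g_\ve}^n-\om_\phi^n)=0$ by integration by parts; and your caveat about the sign of $\lambda$ is legitimate — the paper's claim $\lambda>0$ is likewise unjustified by its own argument.
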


\begin{proof}
Recall the definition of $\om_{g_\ve}$:
\begin{equation}
\om_{g_\ve} = \om_{\phi}
 + C_\ve \frac{\sqrt{-1}}{2\pi}\partial \bar\partial(-||S||')^{2\ve},
\end{equation}
where $\om_{\phi} = \frac{\sqrt{-1}}{2\pi} \frac{n^{1 + 1/n}}{n+1} \partial \bar{\partial}
(-\log||S||_\phi^2)^{ \frac{n+1}{n}}$.
We choose
$\phi$  (as in Section~2) so that the corresponding 
$f_\phi$ decays faster than $O(||S||^{\ve})$.

A direct computation using integration by parts 
shows that  $ \int_M \om_{g_\ve}^n - \om_\phi^n = 0$.
Also, the definitions of $\om_{g_\ve}$ and $\om_\phi$ imply that
$e^{f_{g_\ve}}\om_{g_\ve}^n = e^{f_\phi}\om_\phi^n$. Therefore, 
$
\int_M (e^{f_{g_\ve}}-1)\om_{g_\ve}^n 
=
\int_M (e^{f_{\phi}}-1)\om_{\phi}^n. 
$
On the other hand, Definition \ref{fphi} gives 
\begin{equation}
\label{e.invariant}
e^{f_{\phi}} \om_\phi^n  = 
\frac{e^{-\Psi} {\om'}^n}{||S||^2}.
\end{equation}
Notice that the function $\Psi$ remains unchanged if we
replace $\phi$ by $\phi + \la$, since $\tilde \om_\phi =
\tilde \om_{\phi + \la}$. 
Therefore, the right-hand side of (\ref{e.invariant}) is
invariant under the transformation $\phi \mapsto \phi + \la$.

On the other hand, a straightforward computation using (\ref{omegaphi}) shows that
\begin{equation}
\om_{\phi + \la}^n  =  
\left(
\frac{\sqrt{-1}}{2\pi} \frac{n^{1 + 1/n}}{n+1} \partial \bar{\partial}
(-\log||S||_{\phi + \la}^2)^{\frac{n+1}{n}}
\right) = 
\left(
\frac{\sqrt{-1}}{2\pi} \frac{n^{1 + 1/n}}{n+1} \partial \bar{\partial}
(-\log||S||_{\phi}^2)^{\frac{n+1}{n}}
\right) - n \la \tilde{\om}_\phi^n,
\end{equation}
where $ \tilde{\om}_\phi^n$ is the curvature form of the 
hermitian metric $||.||_\phi$.

Therefore, by redefining $f_\phi$ by 
$
f_\phi = - \log ||S||^2 - \log \frac{\om_{\phi + \la}^n}{\om'^n} - \Psi, 
$
we have that
\begin{equation}
\label{e.vanish}
\int_M (e^{f_{\phi}} -1) \om_{\phi+ \la}^n
=
\int_M  \left(\frac{e^{-\Psi} {\om'}^n}{||S||^2} -  \om_{\phi}^n \right)
- n \la \int_M  \tilde{\om}_\phi^n.
\end{equation}
Since the first integral in the above expression is finite and independent
on $\la$, we can choose the number $\la$ so as to make the right-hand side of
(\ref{e.vanish}) equal to {\it zero}. This establishes the lemma.
\end{proof}

The previous lemma shows that each $f_{g_\ve}$ satisfies the conditions in 
the existence theorem of Tian and Yau. 
Also, the estimates on the decay of the Riemann  curvature tensor (Lemma~\ref{l.curvature})
and the observation on the volume growth of the metric $g_\ve$ (see the remark after
Lemma~\ref{l.complete}) show that $(M, g_\ve)$ is a complete \kahler manifold
in which Theorem~\ref{t.TY1} can be applied. 

Therefore, for each $\ve > 0$, there exists a bounded and smooth solution 
$u_\ve$ to the problem (\ref{MAm}). Our goal now is to understand the 
asymptotic behavior of $u_\ve$.

Denote by $\om_\Om$ the \kahler form on $M$ given by Theorem \ref{t.TY1}, when we use
$g_\ve$ (given by Theorem \ref{t.approximatingmetrics}) as the ambient metric:
$$
\om_\Om = \om_{g_\ve} + \frac{\sqrt{-1}}{2 \pi} \partial \bar \partial u_\ve.
$$
Clearly, it suffices to prove the asymptotic assertions on $u_\ve$ for a small tubular 
neighborhood of $D$ in $\Mbar$. Recall from Theorem \ref{t.approximatingmetrics} that on 
$V_\ve \setminus D$,
$$
\om_{g_\ve} = \om_m + C_\ve  \frac{\sqrt{-1}}{2 \pi} \partial \bar \partial
\left( || S ||' \right)^{2\ve},
$$
for some $m \geq  \ve$ fixed.

Since $\om_m$ and $\om_{g_\ve}$ are cohomologous, there exists a function $u_m$ 
such that we can write, in $V_m \setminus D$,
\begin{equation}
\label{e.omOm}
\om_\Om = \om_m +  \frac{\sqrt{-1}}{2 \pi} \partial \bar \partial u_m.
\end{equation}
On the other hand, if $f_m$ is the function defined by (\ref{fphi}),
(\ref{e.omOm}) implies that $u_m$ satisfies 
\begin{equation}
\label{e.MAomm}
\left(
\om_m +  \frac{\sqrt{-1}}{2 \pi} \partial \bar \partial u_m
\right)^n
= 
e^{f_m}\om_m^n  \hspace{1cm} \text{on $V_m \setminus D$,}
\end{equation}
where we remind the reader that $|f_m|_{g_m}$ is of order of $O(||S||^m_m)$.

\begin{lemma}
\label{l.barrier}
On the neighborhood \hbox{$V_m \setminus D = \{ 0< ||S||_m < \de_m \}$,}
we have 
\begin{multline}
   \left\{
\om_m + \frac{\sqrt{-1}}{2\pi}\partial \bar \partial\left(C 
\left[S^i \Sbar^j \theta_{ij} + \Sbar^i S^j \bar\theta_{ij} \right]
(-n\log(||S||_m^2))^k \right)
  \right\}^n
 = \\ = 
\om_m^n 
\left[
1  + C (-n\log(||S||_m^2))^{k-\frac{n+1}{n}} 
\left\{ 
ij(-n\log(||S||_m^2))^2  \left[S^i \Sbar^j \theta_{ij} + \Sbar^i S^j \bar\theta_{ij} \right]
- \right. \right.\\ \left.\left. -
(-n\log(||S||_m^2)) \left[ 
\left( k(i+j) +j(n-1)\right) S^i \Sbar^j \theta_{ij}
+ 
\left( k(i+j) +i(n-1)\right)  \Sbar^i S^j \bar\theta_{ij}
\right]
\right. \right. \\ \left. \left.
+ k(k-n)
\right\}
%\right.\\ \left.
+ O(||S||_m^{i+j+1})
\right],   
\end{multline}
where  $\theta_{ij}$ is a $C^\infty$ local section of $L_D^{-i} \otimes  \overline{L}_D^{-j}$
on $V_m$.
\end{lemma}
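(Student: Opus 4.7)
The plan is to proceed in direct analogy with the proof of Lemma~\ref{chato}, exploiting the formulae (\ref{relate}), (\ref{e.wmn}), and especially (\ref{deldelphi}), all of which were derived in Section~\ref{s.proofchato} for exactly the class of functions $\phi$ that appears in the present statement (here $\phi$ reduces to a single summand of the sum over $i+j$ from before, with an arbitrary overall constant $C$). The essential difference is that we now face an \emph{additive} perturbation $\om_m + \frac{\sqrt{-1}}{2\pi}\partial\bar\partial\phi$ rather than a rescaling of the hermitian metric on $L_D$; consequently the curvature form $\tilde\om_m$ itself is unaltered and we do not need to pass from $\alpha_m$ to $\alpha_\phi$, which simplifies the bookkeeping considerably.

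First I would expand binomially and observe that, by an argument strictly parallel to the derivation of (\ref{O(2m+2)}) (each additional factor of $\partial\bar\partial\phi$ brings in two further powers of $\|S\|_m$ once the logarithmic and $D_m S$-pole contributions are accounted for, and any top-degree form can carry at most one factor of the pole $\frac{D_m S\wedge\overline{D_m S}}{|S|^2}$), all terms $\binom{n}{\ell}\,\om_m^{n-\ell}\wedge\bigl(\frac{\sqrt{-1}}{2\pi}\partial\bar\partial\phi\bigr)^\ell$ with $\ell\geq 2$ contribute at most $O(\|S\|_m^{i+j+1})\,\om_m^n$ and can be absorbed into the stated remainder. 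Thus only $n\,\om_m^{n-1}\wedge\frac{\sqrt{-1}}{2\pi}\partial\bar\partial\phi$ needs to be computed explicitly.

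For this term I would substitute (\ref{deldelphi}) for $\partial\bar\partial\phi$ and use the analogue of (\ref{e.wmn}) for the $(n-1)$-st power, namely
\[
\om_m^{n-1} = \alpha_m^{(n-1)/n}\,\tilde\om_m^{n-2}\wedge\Bigl(\tilde\om_m + \frac{(n-1)\sqrt{-1}}{2\pi\alpha_m}\,\frac{D_m S\wedge\overline{D_m S}}{|S|^2}\Bigr),
\]
with $\alpha_m = -n\log\|S\|_m^2$. After wedging and discarding terms that vanish by degree, three families of contributions survive: (i) the $ij$-pole coefficient from the first line of (\ref{deldelphi}), wedged against $\tilde\om_m^{n-1}$, which yields the $ij\,\alpha_m^{2}[S^i\bar S^j\theta_{ij}+S^j\bar S^i\bar\theta_{ij}]$ piece; (ii) the mixed derivative–pole terms of (\ref{deldelphi}), wedged against the pole part of $\om_m^{n-1}$, which after applying the commutator identity $\overline{D_m}D_m\theta_{ij} = -D_m\overline{D_m}\theta_{ij} - (i-j)\theta_{ij}\tilde\om_m$ already used at the end of Section~\ref{s.proofchato} collapse to the two coefficients $k(i+j)+j(n-1)$ and $k(i+j)+i(n-1)$; and (iii) the explicit $(k\phi/(-\log\|S\|_m^2))$-weighted pole on the last line of (\ref{deldelphi}), which combines with the $n/\alpha_m$ factor in (\ref{e.wmn}) and the normalization by $\om_m^n$ to produce the residual $k(k-n)$ term.

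The main obstacle is the clean bookkeeping of the prefactor $\alpha_m^{k-(n+1)/n}$ appearing in the conclusion. It arises as the net ratio between $\alpha_m^{k}$ (built into $\phi$) and the effective weight $\alpha_m^{(n+1)/n}$ carried by $\om_m^n$ (namely $\alpha_m$ from (\ref{e.wmn}) and an extra $\alpha_m^{1/n}$ from the factor $\tilde\om_m\mapsto\om_m$ in the outermost exterior power). Once this exponent is matched, and once one verifies that only the $\tilde\om_m^{n-1}\wedge\eta$ part of $\om_m^n$ and the $\tilde\om_m^{n-2}\wedge\eta$ part of $\om_m^{n-1}$ contribute to the pole-bearing and derivative-bearing terms respectively, the remainder of the algebra is a straightforward collection of coefficients, parallel to the end of the proof of Lemma~\ref{chato}; no conceptual ingredient beyond that calculation is needed.
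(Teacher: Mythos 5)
Your overall scaffolding is sound: compute $\partial\bar\partial\phi$ explicitly, observe that among the cross-terms of the $n$-th power only $n\,\om_m^{n-1}\wedge\frac{\sqrt{-1}}{2\pi}\partial\bar\partial\phi$ survives at the leading order, and then collect coefficients of $B^2$, $B$, and $1$ after dividing by $\om_m^n$. The paper, however, organizes the expansion differently (and more safely): rather than a binomial in $\om_m$ versus $\partial\bar\partial\phi$, it writes $\om_m + \frac{\sqrt{-1}}{2\pi}\partial\bar\partial\phi$ as a sum of five elementary $(1,1)$-forms with explicit scalar coefficients $a,b,c_1,c_2,d_1,d_2,e$ and raises that linear combination to the $n$-th power; the only term that is not already $O(||S||_m^{i+j+1})$ is $n\,a^{n-1}b\,\tilde\om_m^{n-1}\wedge\frac{n\sqrt{-1}}{2\pi}\frac{D_mS\wedge\overline{D_mS}}{|S|^2}$. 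The two organizations are compatible, but yours needs more care: (\ref{deldelphi}) uses $-\log||S||_m^2$ rather than $B=-n\log||S||_m^2$ (so the powers of $n$ must be tracked explicitly), and it does not display the $\tilde\om_m$-proportional piece that appears explicitly in the paper's fresh computation (\ref{e.long}) and that is essential for producing the coefficient $a$.

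The genuine gap is in your step (ii). You attribute the coefficients $k(i+j)+j(n-1)$ and $k(i+j)+i(n-1)$ to the commutator identity $\overline{D_m}D_m\theta_{ij}=-D_m\overline{D_m}\theta_{ij}-(i-j)\theta_{ij}\tilde\om_m$, as in Lemma~\ref{chato}. That identity plays no role here. In Lemma~\ref{chato} the Laplacian $\square_m\theta_{ij}$ genuinely survives into the expansion of $f_\phi$, but in Lemma~\ref{l.barrier} all terms involving $D_m\theta_{ij}$, $\overline{D_m}\theta_{ij}$, and $D_m\overline{D_m}\theta_{ij}$ (the contributions $c_1,c_2,d_1,d_2,e$ in the paper's notation) are absorbed into the remainder $O(||S||_m^{i+j+1})$ because they carry an extra vanishing factor relative to the pole term. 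The $(n-1)$ factor actually comes from raising $a$ to the power $n-1$: the $\tilde\om_m$-coefficient $a$ of $\om_m+\frac{\sqrt{-1}}{2\pi}\partial\bar\partial\phi$ contains a correction proportional to $(jB-k)$ (coming from differentiating $B^k$ and from $D_mS^i$ hitting the pole), and expanding $a^{n-1}$ multiplies this correction by $n-1$; combining it with the pole coefficient $b$, which supplies $-k(i+j)B$, yields exactly the stated linear-in-$B$ coefficients. If you carry your plan through with the commutator attribution you will not land on the correct coefficients. (You should also be alert that the displayed residual ``$+\,k(k-n)$'' in the lemma is suspicious: the computation of $a^{n-1}b$ gives $k(k-1)+(n-1)k=k(k+n-2)$ as the coefficient of the constant-in-$B$ term, multiplied by $S^i\Sbar^j\theta_{ij}+\Sbar^iS^j\bar\theta_{ij}$, so re-deriving this term rather than trusting the quoted expression is advisable.)
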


\begin{proof}
In order to simplify the notation, define $B = (-n\log(||S||_m^2))$.
Computations (that can be found in detail in \cite{S}) lead to
\begin{multline}
\label{e.long}
\frac{\sqrt{-1}}{2\pi}\partial \bar \partial
\left( 
C B^k \left[S^i \Sbar^j \theta_{ij} + \Sbar^i S^j \bar\theta_{ij} \right] 
\right) = 
%\\
%=
\tilde{\om}_m 
    \left\{ CB^{k-1}\left[ \left( -jB + k\right)S^i \Sbar^j \theta_{ij} \right]
                   + \left[ \left( -iB + k\right) \Sbar^i S^j \bar\theta_{ij}\right]
    \right\} 
+ \\+ 
  \frac{\sqrt{-1}}{2\pi} \frac{D_mS \wedge \overline{D_m S}}{|S|^2} 
\left\{ C B^{k-2}  \left[S^i \Sbar^j \theta_{ij} + \Sbar^i S^j \bar\theta_{ij} \right] 
\left[ ijF^2  -k(i+j)F +k(k-1) \right] \right\}
+\\ + 
CB^{k-1}\frac{\sqrt{-1}}{2\pi}
\left[ \left(jB-k \right)S^i \Sbar^j D_m\theta_{ij}
+  \left(iB-k \right)\Sbar^i S^j D_m\bar\theta_{ij}
\right]\wedge \frac{\overline{D_m S}}{\Sbar}
+ \\ + 
CB^{k-1}\frac{\sqrt{-1}}{2\pi}\frac{D_m S}{S} \wedge
\left[ \left(iB-k \right)S^i \Sbar^j {\bar D}_m\theta_{ij}
+  \left(jB-k \right)\Sbar^i S^j {\bar D}_m\bar\theta_{ij}
\right]
+ \\ + 
C B^k  \frac{\sqrt{-1}}{2\pi} \left[S^i \Sbar^j D_m {\bar D}_m\theta_{ij} 
+ \Sbar^i S^j D_m {\bar D}_m\bar\theta_{ij} \right], 
\end{multline}
where $D_m$ stands for the covariant derivative with respect to the hermitian metric $||.||_m$, 
and where $\tilde{\om}_m $ is its corresponding curvature form.

%Recall that 
%$$
%D_m {\bar D}_m \theta_{ij} = -{\bar D}_mD_m \theta_{ij} -(i-j)\theta_{ij}\tilde{\om}_m,
%$$
%so that the expression (\ref{e.long}) becomes
%\begin{multline}
%\label{e.long2}
%\frac{\sqrt{-1}}{2\pi}\partial \bar \partial\left(C S^i \Sbar^j \theta_{ij}B^k \right) =
%\tilde{\om}_m \left[ C(kB^{k-1}-iB^k) S^i \Sbar^j \theta_{ij} \right] 
%+ 
%  \frac{\sqrt{-1}}{2\pi} \frac{D_mS \wedge \bar{D_m S}}{|S|^2} \times \\ \times 
%\left\{B^{k-2} C S^i \Sbar^j \theta_{ij}
%\left[ ijF^2  -k(i+j)F +k(k-1) \right] \right\} 
%+ 
%\frac{\sqrt{-1}}{2\pi}D_m \theta_{ij}\wedge \frac{\bar{D_m S}}{\Sbar} C S^i \Sbar^j \theta_{ij}
%\left[jB^k - k B^{k-1} \right] + \\ +
%\frac{\sqrt{-1}}{2\pi}\frac{{D_m S}}{S}\wedge {\bar D}_m \theta_{ij}C S^i \Sbar^j \theta_{ij}
%\left[iB^k - k B^{k-1} \right] 
% + C S^i \Sbar^j \theta_{ij} B^k \frac{\sqrt{-1}}{2\pi} {\bar D}_mD_m \theta_{ij},
%\end{multline}

From (\ref{relate}), we have
$$
\om_m = (-n\log(||S||_m^2))^{\frac{1}{n}} \tilde\om_m + (-n\log(||S||_m^2))^{\frac{1-n}{n}} 
\frac{\sqrt{-1}}{2\pi} \frac{D_mS \wedge \overline{D_m S}}{|S|^2} \, .
$$
Thus
\begin{multline}
\label{e.eachpowerbarrier}
 \left[
\om_m + \frac{\sqrt{-1}}{2\pi}\partial \bar \partial\left(C B^k 
 \left[S^i \Sbar^j \theta_{ij} + \Sbar^i S^j \bar\theta_{ij} \right]\right)
  \right]^n = 
\left[
a \tilde\om_m + b \frac{\sqrt{-1}}{2\pi} \frac{D_mS \wedge \overline{D_mS}}{|S|^2}
 + \right. \\  \left.
 \frac{\sqrt{-1}}{2\pi}\left(c_1 D_m \theta_{ij} + c_2 D_m\bar\theta_{ij}\right)
\wedge \frac{\overline{D_m S}}{\Sbar}
% \right. + \\ + \left.
+ \frac{\sqrt{-1}}{2\pi}
\frac{{D_m S}}{S}  \wedge \left(d_1{\bar D}_m \theta_{ij} + d_2 {\bar D}_m \bar\theta_{ij}\right)
 +
%\right. \\ + \left.
e \frac{\sqrt{-1}}{2\pi} {\bar D}_mD_m \theta_{ij}
\right]^n,
\end{multline}
where 
\begin{eqnarray}
\nonumber
a &=& B^{\frac{1}{n}} \left[ 1 - C B^{k-\frac{n+1}{n}}\left[ (jB-k) S^i \Sbar^j \theta_{ij} 
 (iB -k)\Sbar^i S^j \bar\theta_{ij} + \right]  \right] 
\\
\nonumber
b &=&   B^{\frac{1-n}{n}}\left[ 1 + C B^{k-\frac{n+1}{n}} 
 \left[S^i \Sbar^j \theta_{ij} + \Sbar^i S^j \bar\theta_{ij} \right]
\left[ ijF^2  -k(i+j)F +k(k-1) \right] \right]
\\
c_1 &=& C  S^i \Sbar^j B^{k-1}[jB-k], \hspace{1cm} 
c_2 =  C  \Sbar^i S^j B^{k-1}[iB-k] 
\\
\nonumber
d_1 &=& C  S^i \Sbar^j B^{k-1}[iB-k], \hspace{1cm} 
d_2 =  C  \Sbar^i S^j B^{k-1}[jB-k] 
\\
\nonumber
e &=& C  S^i \Sbar^j B^{k}.
\end{eqnarray}

Now, we proceed on estimating each term on (\ref{e.eachpowerbarrier}).

\begin{equation}
\label{e.an}
a^n \tilde\om_m^n = 
 B^{\frac{1}{n}} \left[ 1 - C n B^{k-\frac{n+1}{n}}\left[ (jB-k) S^i \Sbar^j \theta_{ij} 
+ (iB -k)\Sbar^i S^j \bar\theta_{ij}  \right]   + O(||S||_m^{i+j+1})
\right] 
\tilde\om_m^n \, .
\end{equation}
Also
\begin{multline}
\label{e.secondtermone}
n a^{n-1}b  
 = 
\left[ 1 - C (n-1) B^{k-\frac{n+1}{n}}\left[ (jB-k) S^i \Sbar^j \theta_{ij} +
 (iB -k)\Sbar^i S^j \bar\theta_{ij} + \right]  \right] 
\cdot \\ \cdot
\left[ 1 + C B^{k-\frac{n+1}{n}} 
 \left[S^i \Sbar^j \theta_{ij} + \Sbar^i S^j \bar\theta_{ij} \right]
\left[ ijB^2  -k(i+j)B +k(k-1) \right] \right]
= \\ = 
1  + C B^{k-\frac{n+1}{n}} 
\left\{ 
ijB^2  \left[S^i \Sbar^j \theta_{ij} + \Sbar^i S^j \bar\theta_{ij} \right]
- \right. \\ \left. -
B \left[ 
\left( k(i+j) +j(n-1)\right) S^i \Sbar^j \theta_{ij}
+ 
\left( k(i+j) +i(n-1)\right)  \Sbar^i S^j \bar\theta_{ij}
\right]
+ k(k-n)
\right\}
%\\ 
+ O(||S||_m^{i+j+1}) \, .
\end{multline}
The expressions for the other terms are analogous, and will henceforth 
be omitted.

From (\ref{omegaphi}), we deduce that
$$
\tilde\om_m^{n} = \om_m^n \frac{||S||_m^2 B^{-1}}{||S||_m^2 + B^{-1}||D_mS||_m^2}, 
$$
and since 
$$
 \frac{||S||_m^2 }{||S||_m^2 + B^{-1}||D_mS||_m^2} = 
 \frac{||S||_m^2 B^{-1}}{||D_mS||_m^2} 
\left( \frac{1}{1 + \frac{B||S||_m^2}{||D_mS||_m^2}}\right)= O(||S||_m^2B^{-1}),
$$
all the terms in (\ref{e.eachpowerbarrier}) will decay at the order of
at least $ O(||S||_m^{i+j+1})$, with the exception of the term 
(\ref{e.secondtermone}), which will be written as: 
\begin{multline}
a^{n-1} b \left\{\tilde\om_m^{n-1} \wedge \frac{n\sqrt{-1}}{2\pi} 
\frac{D_mS \wedge \overline{D_m S}}{|S|^2} \right\}
= a^{n-1} b  \frac{||D_mS||^2_m}{||S||_m^2} {\tilde\om_m^{n}} = 
\\
=
a^{n-1} b  \left(\frac{||D_mS||^2_m}{||S||_m^2} \right)\frac{ \om_m^n||S||_m^2 B^{-1}}{||S||_m^2 %%@
+ B^{-1}||D_mS||_m^2}
=  
%\\ =  
\om_m^n 
\left[
1  + C B^{k-\frac{n+1}{n}} 
\left\{ 
ijB^2  \left[S^i \Sbar^j \theta_{ij} + \Sbar^i S^j \bar\theta_{ij} \right]
- \right. \right.\\ \left.\left. -
B \left[ 
\left( k(i+j) +j(n-1)\right) S^i \Sbar^j \theta_{ij}
+ 
\left( k(i+j) +i(n-1)\right)  \Sbar^i S^j \bar\theta_{ij}
\right]
+ k(k-n)
\right\}
%\right.\\ \left.
+ O(||S||_m^{i+j+1})
\right]   \, .
\end{multline}
Therefore, 
\begin{multline}
   \left[
\om_m + \frac{\sqrt{-1}}{2\pi}\partial \bar \partial\left(C S^i \Sbar^j \theta_{ij}B^k \right)
  \right]^n
%=  \\ =
=  
\om_m^n 
\left[
1  + C B^{k-\frac{n+1}{n}} 
\left\{ 
ijB^2  \left[S^i \Sbar^j \theta_{ij} + \Sbar^i S^j \bar\theta_{ij} \right]
- \right. \right.\\ \left.\left. -
B \left[ 
\left( k(i+j) +j(n-1)\right) S^i \Sbar^j \theta_{ij}
+ 
\left( k(i+j) +i(n-1)\right)  \Sbar^i S^j \bar\theta_{ij}
\right]
+ k(k-n)
\right\}
%\right.\\ \left.
+ O(||S||_m^{i+j+1})
\right],   
\end{multline}
completing the proof of the lemma.
\end{proof}

\begin{prop}
\label{p.uvanishesalot}
Let $u_m$ be a solution to the Monge-Amp\`ere equation (\ref{e.MAomm}).
If $u_m(x)$ converges uniformly to zero as $x$ approaches the divisor, 
then there exists a constant $C = C(m)$ such that
\begin{equation}
\label{e.uvanishesalot}
|u_m(x)| \leq C ||S||_m^{m+1} \hspace{1cm} \text{on $V_m\setminus D$}.
\end{equation}
\end{prop}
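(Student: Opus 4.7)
The plan is to apply the comparison principle for the complex Monge-Amp\`ere operator together with a barrier built from Lemma~\ref{l.barrier}. First I would choose smooth local sections $\theta_{ij}\in\Gamma(V_m, L_D^{-i}\otimes\overline{L}_D^{-j})$ for indices $i+j=m+1$ with $i,j\ge 1$, so that the real smooth function
$$
\psi := \sum_{i+j=m+1,\ i,j\ge 1} \bigl(S^i\Sbar^j\theta_{ij} + \Sbar^i S^j\overline{\theta}_{ij}\bigr)
$$
satisfies $c_1\,\|S\|_m^{m+1}\le \psi \le c_2\,\|S\|_m^{m+1}$ near $D$, and such that the weighted sum $\sum ij\,\theta_{ij}$ appearing as the leading coefficient in Lemma~\ref{l.barrier} has a definite sign. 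For constants $A>0$ and $k\in\NN$ to be chosen, set $w := A\,\psi\,\bigl(-n\log\|S\|_m^2\bigr)^k$, so that $|w|\le CA\|S\|_m^{m+1}(-n\log\|S\|_m^2)^k$ near $D$.

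Applying Lemma~\ref{l.barrier} yields
$$
\frac{(\om_m + \frac{\sqrt{-1}}{2\pi}\partial\bar\partial w)^n}{\om_m^n} = 1 + A\,\bigl(-n\log\|S\|_m^2\bigr)^{k+(n-1)/n}\, \sum_{i+j=m+1} ij\bigl(S^i\Sbar^j\theta_{ij} + \Sbar^i S^j\overline{\theta}_{ij}\bigr) + (\text{lower order}) + O(\|S\|_m^{m+2}).
$$
Choosing $k$ at least as large as the highest log-power $\ell_{m+1}$ occurring in the expansion~\eqref{expansionfm} of $f_m$, and $A$ large enough, the leading term dominates $e^{f_m}-1$ in both size and sign. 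Hence $+w$ (resp.\ $-w$) is a supersolution (resp.\ subsolution) of~\eqref{e.MAomm} on a sufficiently small neighborhood of $D$.

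Then I would apply the comparison principle on the annular region $\Omega_\delta := \{\delta < \|S\|_m < \delta''\}$, with $\delta''$ fixed so that the above sub/supersolution property holds on $\overline{\Omega}_\delta$. On the outer boundary $u_m$ is bounded by compactness, while on the inner boundary the hypothesis forces $u_m\to 0$ uniformly as $\delta\to 0$; enlarging $A$ if needed, this gives $|u_m|\le w$ on $\partial\Omega_\delta$. Comparison then yields $-w\le u_m\le w$ throughout $\Omega_\delta$, and letting $\delta\to 0$ produces the preliminary estimate $|u_m(x)|\le CA\,\|S\|_m^{m+1}\bigl(-n\log\|S\|_m^2\bigr)^k$ near $D$.

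To upgrade this to the stated logarithm-free bound, I would bootstrap: the preliminary estimate lets one absorb the top log-power of $f_m$ into the MA operator applied to a refined barrier, so $k$ may be decreased by one; iterating finitely many times brings $k$ down to zero and yields $|u_m|\le C\|S\|_m^{m+1}$. The principal obstacle is ensuring that at each bootstrap step the leading correction coming from Lemma~\ref{l.barrier} (a signed combination of the sections $\theta_{ij}$) retains a uniform definite sign dominating the residual terms; arranging this requires a coordinated choice of the $\theta_{ij}$ (for instance via a partition-of-unity argument exploiting the positivity of $\tilde\om_m$ along $D$) together with careful monitoring of the cross-terms in the expansion.
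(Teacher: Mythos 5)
Your overall strategy---build a barrier from Lemma~\ref{l.barrier} and apply the comparison/maximum principle for the complex Monge-Amp\`ere operator---is the same as the paper's, but there is a genuine sign obstruction in your choice of indices that you flag but do not resolve, and it cannot be resolved in the way you suggest. You restrict to $i+j=m+1$ with $i,j\ge 1$, so that $ij>0$ for every term. The leading correction term in Lemma~\ref{l.barrier} for $w=A\psi\,B^{k}$ (with $B=-n\log\|S\|_m^2$) is then $C\,A\,B^{k+(n-1)/n}\sum ij\bigl[S^i\Sbar^j\theta_{ij}+\Sbar^iS^j\overline\theta_{ij}\bigr]$. To use $+w$ as an \emph{upper} barrier you simultaneously need $\psi>0$ (so that $+w$ bounds $|u_m|$ from above) and the correction term to be \emph{negative} (so that $(\om_m+\frac{\sqrt{-1}}{2\pi}\partial\bar\partial w)^n\le e^{f_m}\om_m^n$ when $A$ is taken large, dominating $e^{f_m}-1=O(\|S\|^{m+1})$). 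With $ij>0$ these two requirements are incompatible: $\psi>0$ forces the correction to be positive, so $+w$ is in fact a \emph{sub}solution, and no ``coordinated choice of the $\theta_{ij}$'' or partition-of-unity argument can fix this, since the positivity of $ij$ is structural. The symmetric problem occurs for $-w$ as a lower barrier.

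The paper's resolution is a small but essential trick: it applies Lemma~\ref{l.barrier} with $i=m+2$ and $j=-1$, so that $i+j=m+1$ (retaining the required decay rate $\|S\|_m^{m+1}$) while $ij=-(m+2)<0$. The function $S^{m+2}\Sbar^{-1}\theta_{m+2,-1}+\Sbar^{m+2}S^{-1}\overline\theta_{m+2,-1}$ is then chosen \emph{positive} near $D$ (via local bump-function sections glued using the cocycle relations), and because $ij<0$ the leading correction term has the opposite sign to the barrier, so $C_1[\dots]B^\ell$ with $C_1>0$ is genuinely a supersolution and $C_2[\dots]B^\ell$ with $C_2<0$ a subsolution; the constants $C_i=C_i'/\ve$ are then sized to absorb $\sup|u_m|$ on the level set $[\dots]=\ve$, and the maximum principle plus the uniform-vanishing hypothesis finish the argument in a single step. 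Your closing bootstrap to peel off logarithmic factors is not used in the paper and is not needed; the decisive issue is the sign of $ij$, which your restriction $i,j\ge 1$ forecloses.
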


\begin{proof}
It suffices to prove (\ref{e.uvanishesalot}) in a neighborhood of $D$.
Apply Lemma \ref{l.barrier} for $i= m+2$ and $j= -1$, and choose the section
$\theta_{ij}$ so that the function $S^i \Sbar^j \theta_{ij} + \Sbar^i S^j \bar\theta_{ij}$
is positive on $V_m\setminus D$. Note that there is, in fact, a
$C^{\infty}$-section $\theta_{ij}$ satisfying this condition. Indeed, a local
section on a trivializing coordinate can clearly be constructed by means of a
bump function. In particular we can consider finitely many local sections as above
such that the union of their supports covers the all of $D$.
Since the positivity condition is naturally respected by the
cocycle relations arising from the change of coordinates, the desired section
$\theta_{ij}$ can simply be obtained by adding these local sections.

With the above choices, we have
\begin{multline}
   \left[
\om_m + \frac{\sqrt{-1}}{2\pi}\partial \bar \partial
C \left[S^i \Sbar^j \theta_{ij} + \Sbar^i S^j \bar\theta_{ij} \right]
(-n\log(||S||_m^2))^k
  \right]^n
=  \\ = 
\om_m^n 
\left[
1  - C (m+2) (-n\log(||S||_m^2))^{k+\frac{n-1}{n}} 
\left\{\left[ 1 + o(1) \right] 
  \left[S^i \Sbar^j \theta_{ij} + \Sbar^i S^j \bar\theta_{ij} \right]
\right\}
%+ \right. \\ \left.
+ O(||S||_m^{m+2})
\right].  
\end{multline}

On the other hand,
\begin{equation}
e^{f_m}\om_m^n = [1 + O(||S||_m^{m+1})]\om_m^n \hspace{1cm} \text{on $V_m\setminus D$}. 
\end{equation}
More precisely, we can write on $V_m\setminus D$
\begin{equation}
e^{f_m}\om_m^n = [1 + 
\sum_{\ell =0}^{\ell_{m+1}} 
\{S^i \Sbar^j \theta_{ij} + \Sbar^i S^j \bar\theta_{ij}\}(-\log||S||^2_m)^\ell
+  O(||S||_m^{m+2})]\om_m^n, 
\end{equation}
for sections $\theta_{ij} \in \Gamma(V_m\setminus D, L_D^{-i} \otimes {\bar L}_D^{-j})$.

Let $\ve>0$, and define $C_i = \frac{C'_i}{\ve}$, 
where $C_1':= \sup_{x \in V_m\setminus D}(|u_m| + 1)$, and
$C_2' = - C_1'$.
Then, for all $x\in V_m\setminus D$ verifying 
\begin{equation}
\label{e.nghd}
\left( S^{m+2} \Sbar^{-1} \theta_{m+2,-1} + \Sbar^{m+2} S^{-1} \bar\theta_{m+2, -1}
(-n\log(||S||_m^2))^{\ell_{m+1}}\right) (x)
 = \ve,
\end{equation}
it follows that
$$C_1 \left( S^{m+2} \Sbar^{-1} \theta_{m+2,-1} + \Sbar^{m+2} S^{-1} \bar\theta_{m+2, -1}
(-n\log(||S||_m^2))^{\ell_{m+1}}\right) (x)> |u_m(x)|.
$$
Furthermore, if $\ve$ is sufficiently small, then on the subset 
$$\{x \in V_m\setminus D; \left( S^{m+2} \Sbar^{-1} \theta_{m+2,-1}
+ \Sbar^{m+2} S^{-1} \bar\theta_{m+2, -1}
(-n\log(||S||_m^2))^{\ell_{m+1}}\right) (x)\leq \ve \},
$$ 
we have
(for $i = m+2$ and $j = -1$)
\begin{equation}
\nonumber
 \left[
\om_m + \frac{\sqrt{-1}}{2\pi}\partial \bar \partial
C_1 \left[S^i \Sbar^j \theta_{ij} + \Sbar^i S^j \bar\theta_{ij} \right]
(-n\log(||S||_m^2))^{\ell_{m+1}-\frac{n-1}{n}}
  \right]^n \leq e^{f_m}\om_m^n, \hspace{0.3cm} \text{and}
\end{equation}
\begin{equation}
\nonumber
 \left[
\om_m + \frac{\sqrt{-1}}{2\pi}\partial \bar \partial
C_2 \left[S^i \Sbar^j \theta_{ij} + \Sbar^i S^j \bar\theta_{ij} \right]
(-n\log(||S||_m^2))^{\ell_{m+1}-\frac{n-1}{n}}
  \right]^n \geq e^{f_m}\om_m^n.
\end{equation} 
Finally, by using the hypothesis on the uniform vanishing of $u_m$ on $D$, the proposition 
follows from the maximum principle for the complex Monge-Amp\`ere
 operator: we obtain the following bound (write $\ell = \ell_{m+1}-\frac{n-1}{n}$)
\begin{equation}
C_2 \left[S^i \Sbar^j \theta_{ij} + \Sbar^i S^j \bar\theta_{ij} \right]
(-n\log(||S||_m^2))^{\ell}
\leq
u_m
\leq
C_1 \left[S^i \Sbar^j \theta_{ij} + \Sbar^i S^j \bar\theta_{ij} \right]
(-n\log(||S||_m^2))^{\ell}
\end{equation}
on the neighborhood given in (\ref{e.nghd}), which completes the proof
of the proposition.
\end{proof}

\medskip

Now, let us deal with another
 step in the proof of Theorem \ref{t.mainthm}, which 
consists of showing that the solution to the Monge-Amp\`ere equation
(\ref{e.MAomm}) actually converges  uniformly to zero.

\begin{prop}
\label{p.uvanishes}
For a fixed $m \geq 2 $, let $u_m$ be a solution to (\ref{e.MAomm}). Then
$u_m(x)$ converges uniformly
to zero as $x$ approaches the divisor $D$.
\end{prop}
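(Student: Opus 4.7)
The strategy mirrors that of Proposition~\ref{p.uvanishesalot}: a barrier construction combined with the complex Monge-Amp\`ere comparison principle, but tuned to the coarser goal of uniform convergence rather than a sharp decay rate.

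First, $u_m$ is bounded on $V_m\setminus D$. Indeed, in $V_m\setminus D$ the identities
\[
\om_\Om = \om_{g_\ve} + \frac{\sqrt{-1}}{2\pi}\partial\bar\partial u_\ve = \om_m + \frac{\sqrt{-1}}{2\pi}\partial\bar\partial u_m
\]
together with $\om_{g_\ve}-\om_m = C_\ve \frac{\sqrt{-1}}{2\pi}\partial\bar\partial(||S||')^{2\ve}$ force $u_m = u_\ve + C_\ve(||S||')^{2\ve} + \text{const}$. The first summand is globally bounded by Theorem~\ref{t.TY1}, the second vanishes at $D$, and the additive constant is pinned down by the integrability condition (\ref{intconditionlemma}); thus $L^\pm := \limsup_{x\to D} u_m$ and $\liminf_{x\to D} u_m$ are finite.

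The core step is to produce, for each $\eta>0$, two-sided barriers of the form
\[
\Phi_\eta^\pm(x) = L^\pm \pm \eta \mp C_\eta\,\Theta(x)\,(-n\log||S||_m^2)^{k},
\]
with $k<0$ a fixed small exponent, $\Theta = \theta_{00}+\bar\theta_{00}$ a globally positive $C^\infty$ function on $\overline V_m$ (obtained via a partition of unity on $D$, as in Proposition~\ref{p.uvanishesalot}), and $C_\eta>0$ large enough to ensure $\Phi_\eta^+\geq u_m$ and $\Phi_\eta^-\leq u_m$ on $\partial V_m$. Since $k<0$, the log-power term vanishes at $D$, so $\Phi_\eta^\pm\to L^\pm\pm\eta$ uniformly there. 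Applying Lemma~\ref{l.barrier} with $i=j=0$ (the choice that kills the $ij$, $i(n-1)$ and $j(n-1)$ contributions in the formula) yields
\[
\bigl(\om_m + \frac{\sqrt{-1}}{2\pi}\partial\bar\partial\Phi_\eta^\pm\bigr)^n = \om_m^n\Bigl[1 \mp C_\eta\,k(k-n)\,\Theta\,(-n\log||S||_m^2)^{k-\frac{n+1}{n}} + O(||S||_m)\Bigr].
\]
Since $k(k-n)>0$ when $k<0$, while any negative power of $\log||S||_m^2$ dominates the polynomial $O(||S||_m^{m+1})$ discrepancy between $e^{f_m}\om_m^n$ and $\om_m^n$, after shrinking $V_m$ one gets the strict inequalities $(\om_m + \frac{\sqrt{-1}}{2\pi}\partial\bar\partial\Phi_\eta^-)^n > e^{f_m}\om_m^n > (\om_m + \frac{\sqrt{-1}}{2\pi}\partial\bar\partial\Phi_\eta^+)^n$ near $D$.

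With these barriers in place, the Bedford-Taylor comparison principle, applied on the annular exhaustion $\{\de\leq||S||_m\leq\de_m\}$ and passing to $\de\to 0$ using the prescribed limits $L^\pm\pm\eta$ of $\Phi_\eta^\pm$ at $D$, gives $\Phi_\eta^-\leq u_m\leq\Phi_\eta^+$ throughout $V_m\setminus D$. Sending $x\to D$ forces $L^-\geq L^+ - 2\eta$, and letting $\eta\to 0$ yields $L^+=L^-$; the integrability-condition normalization identifies this common limit with $0$. The main obstacle is verifying the correct-sign strict Monge-Amp\`ere inequalities above: this rests on the sign $k(k-n)>0$ for $k<0$ isolated by the $i=j=0$ specialization of Lemma~\ref{l.barrier}, together with the polynomial-versus-log-power comparison against $f_m=O(||S||_m^m)$. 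Adapting the comparison principle to the incomplete region $V_m\setminus D$ is a subsidiary technicality handled by the annular exhaustion and the boundedness of $u_m$.
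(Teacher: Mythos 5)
Your argument has a genuine gap at its core. The barriers $\Phi_\eta^\pm$ are anchored to the very quantities $L^\pm=\limsup/\liminf_{x\to D}u_m$ that you are trying to identify, and the resulting squeeze is a tautology. If $\Phi_\eta^-\leq u_m\leq\Phi_\eta^+$ with $\Phi_\eta^\pm\to L^\pm\pm\eta$ at $D$, then letting $x\to D$ gives only $L^-\geq L^--\eta$ and $L^+\leq L^++\eta$; the claimed inference ``sending $x\to D$ forces $L^-\geq L^+-2\eta$'' simply does not follow, and $L^+$ and $L^-$ never interact. There is also a boundary-matching problem: on the outer sphere $\{||S||_m=\delta_m\}$ you need $\Phi_\eta^+ = L^++\eta - C_\eta\Theta\, B(\delta_m)^k\geq u_m$, but if $\sup_{||S||_m=\delta_m}u_m > L^++\eta$ (entirely possible --- e.g.\ $u_m$ could decay to $0$ at $D$ while remaining strictly positive inside $V_m$), no $C_\eta>0$ accomplishes this, since the term you control is subtracted. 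So the comparison principle cannot be set up on the annulus as described.

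The missing idea is the one the introduction of the paper emphasizes: in the Ricci-flat (as opposed to the K\"ahler-Einstein) setting there is no zeroth-order term in the equation, so one cannot read off the boundary value of $u_m$ directly and a maximum principle at $D$ has no a priori boundary data to work with. The paper's proof supplies this data by returning to the approximating sequence from \cite{TY1}: $u_m$ is the uniform limit of solutions $u_{m,\ve}$ of the perturbed equation $(\om_m + \frac{\sqrt{-1}}{2\pi}\partial\bar\partial u)^n = e^{f_m+\ve u}\om_m^n$. The $\ve u$ term furnishes an honest zeroth-order maximum principle, and the vanishing of $u_{m,\ve}$ at $D$ is known a priori from \cite{CY1}. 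One then builds barriers via Lemma~\ref{l.barrier} with $i=2$, $j=-1$, $k=0$ --- so the dominant sign comes from the $ij<0$ contribution rather than the constant $k(k-n)$ term your $i=j=0$ choice isolates --- checks that the barrier constant can be taken uniform in $\ve$, and finally passes to the limit $\ve\to 0$. Your scheme never accesses the a priori vanishing of the perturbed solutions, which is precisely what is needed to break the circularity.
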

\begin{proof}
In \cite{TY1}, the solution $u_m$ to the Monge-Amp\`ere equation~(\ref{MA}) is
obtained as the uniform limit, as $\ve$ goes to {\it zero}, of solutions $u_{m,\ve}$  
of the perturbed Monge-Amp\`ere equations
\begin{equation}
\label{MAve}
\begin{cases}
\left(
\om_m + \frac{\sqrt{-1}}{2\pi}\partial\bar\partial u
\right)^n  = e^{f_m + \ve u} \om_m^n, & \\
\om_m + \frac{\sqrt{-1}}{2\pi}\partial\bar\partial u >0,
& \text{$u \in C^\infty(M, \RR)$}.
\end{cases}
\end{equation}

On the neighborhood $V_m$,
Lemma~(\ref{l.barrier}) applied for $i = 2$, $j= -1$ and $k = 0$, gives
\begin{multline}
   \left\{
\om_m + \frac{\sqrt{-1}}{2\pi}\partial \bar \partial\left(C 
\left[S^2 \Sbar^{-1} \theta_{2, -1} + \Sbar^2 S^{-1} \bar\theta_{2, -1} \right]\right)
  \right\}^n
 = \\ = 
\om_m^n 
\left[
1  -2 C (-n\log(||S||_m^2))^{-\frac{n+1}{n}} 
\left\{ 
(-n\log(||S||_m^2))^2  
\left[S^2 \Sbar^{-1} \theta_{2, -1} + \Sbar^2 S^{-1} \bar\theta_{2, -1} \right]
- \right. \right.\\ \left.\left. -
(-n\log(||S||_m^2)) \left[ 
(1-n) S^2 \Sbar^{-1} \theta_{2, -1}
+ 
2(n-1)  \Sbar^2 S^{-1} \bar\theta_{2, -1}
\right]
\right\}
+ O(||S||_m^{2})
\right] \, .
\end{multline}

Again, we can choose appropriate local $C^\infty$-sections $\theta_{2, -1}$ such that
$$\left[S^2 \Sbar^{-1} \theta_{2, -1} + \Sbar^2 S^{-1} \bar\theta_{2, -1} \right]$$
is a positive function on a neighborhood of the divisor, and use this function as a
uniform barrier to the sequence of solutions $\{u_{m,\ve}\}$.

Note that $e^{f_m + \ve u_{m, \ve}} = 1 +  O(||S||_m)$. Hence, as in the proof
of Lemma \ref{l.barrier}, we can define, for a fixed $\de >0$,  $C_i = \frac{C_i'}{\de}$, 
where $C'_1:= \sup_{x \in V_m\setminus D}(|u_{m,\ve}| + 1)$ and $C_2' = -C_1'$.
A priori, $C_i'$ could depend on $\ve$, but it turns out (see \cite{TY1} for details) 
that $\sup_M |u_{m,\ve}|$ can be bounded 
uniformly by a constant independent on $\ve$.
Then, for all $x\in V_m\setminus D$ such that 
$$
\left( S^{2} \Sbar^{-1} \theta_{2,-1} + \Sbar^{2} S^{-1} \bar\theta_{2, -1}\right) (x)
 = \de,
$$
we have that
$$
C_1 \left( S^{2} \Sbar^{-1} \theta_{2,-1} + \Sbar^{2} S^{-1} \bar\theta_{2, -1}\right) (x)> |u_{m,\ve(x)}|.
$$
In addition, in the neighborhood 
$\{x \in V_m\setminus D; \left( S^{2} \Sbar^{-1} \theta_{2,-1} + \Sbar^{2} S^{-1}
\bar\theta_{2, -1}\right) (x)\leq \de \}$, for a fixed $\de$ sufficiently small, we have
\begin{equation}
\nonumber
 \left[
\om_m + \frac{\sqrt{-1}}{2\pi}\partial \bar \partial
C_1\left[S^2 \Sbar^{-1} \theta_{2, -1} + \Sbar^2 S^{-1} \bar\theta_{2, -1} \right]
 \right]^n \leq 
e^{f_m + \ve C_1\left[S^2 \Sbar^{-1} \theta_{2, -1} + \Sbar^2 S^{-1} \bar\theta_{2, -1} \right]}\om_m^n
\end{equation}
and
\begin{equation}
\nonumber
 \left[
\om_m + \frac{\sqrt{-1}}{2\pi}\partial \bar \partial
C_2\left[S^2 \Sbar^{-1} \theta_{2, -1} + \Sbar^2 S^{-1} \bar\theta_{2, -1} \right]
 \right]^n \geq 
e^{f_m + \ve C_1\left[S^2 \Sbar^{-1} \theta_{2, -1} + \Sbar^2 S^{-1} \bar\theta_{2, -1} \right]}\om_m^n.
\end{equation}

Since $u_{m,\ve}$ vanishes at $D$ (see \cite{CY1}), we can apply the maximum principle
to conclude that there exists a $C$ independent of $\ve$ such that, near $D$,
$$
- C\left[S^2 \Sbar^{-1} \theta_{2, -1} + \Sbar^2 S^{-1} \bar\theta_{2, -1} \right] 
\leq u_{m,\ve}  \leq C\left[S^2 \Sbar^{-1} \theta_{2, -1} + \Sbar^2 S^{-1} \bar\theta_{2, -1} %%@
\right]
$$
Now, since the neighborhood $\{x \in V_m\setminus D; \left( S^{2} \Sbar^{-1} \theta_{2,-1} + \Sbar^{2} S^{-1}
\bar\theta_{2, -1}\right) (x)\leq \de \}$ is a fixed set, independent of $\ve$, and
the constant $C$ above is also independent of $\ve$, we can 
pass to the limit when $\ve$ goes to zero, obtaining the claim.
\end{proof}

\bigskip

We can still provide further information about the decay of the covariant derivatives of the solution
$u_m$ to (\ref{e.MAomm}).

\begin{prop}
\label{p.derivum}
For a fixed $m> 1$, let $u_m$ be a solution to (\ref{e.MAomm}). Then, there exists 
$C = C(k, m)$ such that, for all $x \in V_m \setminus D$, 
\begin{equation}
\label{e.derivum}
|\nabla^k u_m|_{g_m}(x) \leq C ||S||^m_m (x) \rho_m^{-\frac{k+2}{n+1}}.
\end{equation}
\end{prop}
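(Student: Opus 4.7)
The plan is to follow a rescaling argument, analogous to the one used in Lemma~\ref{l.derivcurv}, combined with interior $C^k$ estimates for the complex Monge-Amp\`ere equation on regions of bounded geometry. Fix $x_0 \in V_m \setminus D$ and set $\rho := \rho_m(x_0)$. Pulling back through the diffeomorphism $\Psi$ from Lemma~\ref{l.derivcurv} and rescaling, define $\tilde g := \rho^{-2/(n+1)} \Psi^* g_m$. The analysis carried out in the proof of Lemma~\ref{l.derivcurv} shows that $\tilde g$ has uniformly bounded geometry on a ball of radius $O(1)$ around $\Psi^{-1}(x_0)$: both the curvature tensor and all of its covariant derivatives are bounded uniformly in $x_0$. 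The Monge-Amp\`ere equation~(\ref{e.MAomm}) rescales cleanly: writing $\tilde{\om}_m$ for the \kahler form of $\tilde g$ and setting $w := \rho^{-2/(n+1)} u_m$, a direct computation (using that $\rho$ is a constant in the rescaling) yields $(\tilde{\om}_m + \frac{\sqrt{-1}}{2\pi}\partial\bar\partial w)^n = e^{f_m}\tilde{\om}_m^n$ on the ball.

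The data of this rescaled equation are small and have uniformly bounded higher derivatives. Proposition~\ref{p.uvanishesalot} gives $|w|_{C^0} = \rho^{-2/(n+1)} |u_m|_{C^0} = O(\rho^{-2/(n+1)} ||S||_m^{m+1})$, while the scaling identity $|\tilde\nabla^k f_m|_{\tilde g} = \rho^{k/(n+1)} |\nabla^k f_m|_{g_m}$ combined with Lemma~\ref{l.derivfm} gives $|\tilde\nabla^k f_m|_{\tilde g} = O(||S||_m^{m+1})$ uniformly on the unit ball around $\Psi^{-1}(x_0)$. The equivalence between $\om_m$ and $\om_m + \frac{\sqrt{-1}}{2\pi}\partial\bar\partial u_m$ guaranteed by Theorem~\ref{t.TY1} translates into uniform ellipticity of the rescaled Monge-Amp\`ere equation. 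Invoking the standard interior theory --- Yau's $C^2$ estimate, Evans-Krylov for $C^{2,\alpha}$ regularity, and Schauder bootstrap on the linearizations for higher derivatives --- one obtains $|\tilde\nabla^k w|_{\tilde g}(\Psi^{-1}(x_0)) \leq C_k ||S||_m^{m+1}$ on a slightly smaller ball.

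Unscaling, and using $u_m = \rho^{2/(n+1)} w$ together with $|\nabla^k w|_{g_m} = \rho^{-k/(n+1)} |\tilde\nabla^k w|_{\tilde g}$, we obtain
$|\nabla^k u_m|_{g_m}(x_0) = \rho^{(2-k)/(n+1)} |\tilde\nabla^k w|_{\tilde g}(\Psi^{-1}(x_0)) \leq C_k ||S||_m^{m+1} \rho^{(2-k)/(n+1)}$.
Since $||S||_m \rho^{4/(n+1)} \sim ||S||_m (-\log||S||_m^2)^{2/n} \to 0$ as $x_0 \to D$, this bound is strictly stronger than the claimed $C ||S||_m^m \rho^{-(k+2)/(n+1)}$, completing the proof. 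The main obstacle is precisely the interior Schauder-type estimate for the complex Monge-Amp\`ere equation on the rescaled bounded-geometry region: the $C^{2,\alpha}$ step relies on Evans-Krylov, while higher regularity follows by successively differentiating the equation and applying linear Schauder theory to the uniformly elliptic linear problems that result --- uniform ellipticity being precisely what Theorem~\ref{t.TY1} provides.
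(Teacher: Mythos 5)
Your proposal mirrors the paper's strategy in its overall architecture --- rescale by $\rho_m^{-2/(n+1)}$ to a unit ball of uniformly bounded geometry (invoking Lemma~\ref{l.derivcurv}), set $w = \rho^{-2/(n+1)}u_m$, establish a bound for $w$ in the rescaled metric, and then unscale --- but you replace the paper's engine for the interior estimates. The paper differentiates the rescaled equation (\ref{e.MAtilde}) in coordinate directions, multiplies by $\eta^2 \tilde u$ (and successively $\eta^2\partial\tilde u$, etc.), integrates by parts to obtain $L^2$ bounds on each $\nabla^j\tilde u$, and then upgrades to pointwise bounds via Moser iteration (citing Theorem~8.17 of \cite{GT}), running an induction on $k$. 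You instead invoke Yau's $C^2$ estimate, Evans--Krylov for $C^{2,\alpha}$, and Schauder bootstrap for higher derivatives. Both are standard and valid routes to interior regularity for the complex Monge--Amp\`ere equation on domains of bounded geometry; the paper's Moser route has the advantage of being self-contained down to explicit integral inequalities, while yours is the more common textbook chain. Two remarks: first, you cite only Proposition~\ref{p.uvanishesalot} for the $C^0$ bound on $w$, but that result is conditional on the uniform vanishing of $u_m$ at $D$, which is Proposition~\ref{p.uvanishes}; both are needed, as the paper notes. Second, your final unscaling arithmetic is actually more careful than the paper's: you carry the full $||S||_m^{m+1}$ factor from Proposition~\ref{p.uvanishesalot} through the estimate, obtain $|\nabla^k u_m|_{g_m} \leq C||S||_m^{m+1}\rho^{(2-k)/(n+1)}$, and correctly verify via $||S||_m\rho^{4/(n+1)}\to 0$ that this dominates the stated $||S||_m^m\rho^{-(k+2)/(n+1)}$. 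This point deserves the explicit check you give it, since the direct substitution $|\nabla^k\tilde u|_{\tilde g}\leq C||S||_m^m$ together with $|\nabla^k u_m|_{g_m}=R^{2-k}|\nabla^k\tilde u|_{\tilde g}$ yields only $||S||_m^m\rho^{(2-k)/(n+1)}$, not the advertised exponent, and the extra factor of $||S||_m$ is what closes the gap.
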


\begin{proof}
The strategy of this proof will be induction on $k$, and 
the use of Moser Iteration Technique.

Recall that Lemma~\ref{l.derivcurv} gives
$$
||\nabla^k R(g_m)||_{g_m} (x) = O(\rho^{-\frac{k+2}{n+1}})
$$
for all $x \in V_m \setminus D$.

In order to use Moser Iteration, we will need to localize on a
small ball around a point $x \in V_m \setminus D$. 
Let us define a new rescaled \kahler metric on 
$B_R(x, g_m)$ by $\tilde(g) = R^{-2}g_m$, where
$R = \rho_m^{\frac{1}{n+1}}(x)$.
Note that $B_1(x, \tilde(g)) = B_R(x, g_m)$.

Then, 
$$
||\tilde \nabla^k R(\tilde g )||_{\tilde g} (x) =
||\nabla^k R(g_m)||_{g_m} (x) R^{k+2},
$$
and hence $\sup \{ ||\tilde \nabla^k R(\tilde g )||_{\tilde g} (x); x \in _{B_1(x, \tilde(g))}\} \leq C$.

By writing $\tilde u = R^{-2} u_m$, we get that $\tilde u$ satisfies
\begin{equation}
\label{e.MAtilde}
\left(
\om_{\tilde g} + \frac{\sqrt{-1}}{2 \pi} \partial \bar \partial \tilde u
\right)^n
=
e^{f_m} \om_{\tilde g}^n
\hspace{2cm} \text{on $B_1(x, \tilde(g))$,}
\end{equation}
and from Propositions \ref{p.uvanishes} and \ref{p.uvanishesalot}, we 
also have that
$\sup \{|\tilde u|; x \in B_1(x, \tilde(g)) \} \leq C ||S||_m^m (x).$

Lemma~\ref{l.derivfm} also tells us that 
$|\nabla^k f_m|_{g_m}(x) = O(||S||^{m+1}_m (x) \rho_m^{-\frac{k}{n+1}}(x))$, 
hence there exists a constant $C = C(k, m)$ such that 
$\sup \{|\nabla^k f_m|_{g_m}; x \in B_1(x, \tilde(g)) \} \leq C ||S||_m^m (x).$

Now, we have all the ingredients to start the inductive proof. 
Since $|\nabla^k u_m|_{g_m}(x) = R^{-k+2}|\nabla^k \tilde u|_{\tilde g}(x)$,
it suffices to prove that there exists a constant $C$ such that
\begin{equation}
\label{e.conclusion}
|\nabla^k \tilde u|_{\tilde g}(x) \leq C ||S||_m^m (x).
\end{equation}

We will proceed by induction on $k$. First, consider the case $k = 1$.
By the second order estimate in \cite{Y1} (see also \cite{TY1}),  
we have that the \kahler metric which solves the Monge-Amp\'ere equation
is equivalent to the chosen representative of the fixed \kahler class, 
{\em ie}, there exists a constant $C = C_m$ such that
$$
0\leq \om_m + \frac{\sqrt{-1}}{2\pi} \partial \bar \partial u_m \leq C \om_m . 
$$
This implies that 
$$
0\leq \om_{\tilde g} + \frac{\sqrt{-1}}{2\pi} \partial \bar \partial \tilde u
\leq C \om_{\tilde g} \hspace{2cm} \text{on $B_1(x, \tilde(g))$}. 
$$

Let $\eta$ be a cut-off function that vanishes outside the unit ball, 
and is identically one on $B_{1/2}(x, \tilde(g))$. By multiplying 
both sides of (\ref{e.MAtilde}) by $\eta^2 \tilde u$
and integrating by parts, we obtain the $L^2$- estimate of $|\nabla \tilde u|$:
\begin{equation}
\int_{B_{1/2}(x, \tilde(g))} |\nabla \tilde u|^2_{\tilde g} \om_{\tilde g}^n
\leq
C \int_{B_{1}(x, \tilde(g))}|e^{f_m} - 1||\tilde u| \om_{\tilde g}^n \leq ||S||^{2m}_m(x).
\end{equation}

Now, differentiating (\ref{e.MAtilde}) with respect to 
$z_k$, $1\leq k \leq n $, we obtain
\begin{equation}
\label{e.moser1}
{\tilde g}^{i \jbar}\left( \frac{\partial \tilde u}{ \partial z_k}\right)_{i \jbar}
=
(e^{f_m} -1){\tilde g}^{i \jbar} \left( \frac{\partial {\tilde g}_{i\jbar}}{ \partial z_k}\right)_{i \jbar}
+
\frac{\partial f_m}{\partial z_k} e^{f_m} + O(|\nabla^2 \tilde u|),
\end{equation}
where $O(|\nabla^2 \tilde u|)$ denotes the terms that can be bounded by $|\nabla^2 \tilde u|$.

Our estimates on the $L^2$-norm of $|\nabla \tilde u|$ on the ball ${B_{1/2}(x, \tilde g )}$
allow us to apply Theorem $8.17$, \cite{GT} (the Moser Iteration Method) to conclude that
\begin{equation}
\label{e.conclusionk1}
|\nabla \tilde u|_{\tilde g} (x) \leq C ||S||^m_m(x).
\end{equation}

Recall that $x$ was a point chosen arbitrarily, so this estimate holds for all   
$x \in V_m \setminus D$.

Now, the next step is to multiply both sides of (\ref{e.conclusionk1})
by $\eta^2 \frac{\partial \tilde u}{\partial z_k}$ (for $\eta$ an adequate
cut-off function defined on ${B_{1}(x, \tilde g)}$) and integrate by parts, 
in order to obtain the estimate on the $L^2$-estimate on the norm of 
$\nabla^2 \tilde u$:
\begin{equation}
\int_{B_{1/2}(x, \tilde(g))} |\nabla^2 \tilde u|^2_{\tilde g} \om_{\tilde g}^n
 \leq C ||S||^{2m}_m(x).
\end{equation}
Since it is analogous to our previous step, we will omit further explanations.
We note that the pointwise estimate on $|\nabla \tilde u|$ allow us
to obtain the $L^2$-estimate on $|\nabla^2 \tilde u|$. This completes the first step 
on the induction (for $k = 1$.

Now, assume that the following is true for all $j\leq k-1$:
\begin{equation}
\label{e.conclusionkj}
|\nabla^j \tilde u|_{\tilde g} (x) \leq C ||S||^m_m(x), \hspace{2cm} 
\text{and}
\end{equation}
\begin{equation}
\int_{B_{1/2}(x, \tilde(g))} |\nabla^k \tilde u|^2_{\tilde g} \om_{\tilde g}^n
 \leq C ||S||^{2m}_m(x).
\end{equation}

Differentiating (\ref{e.MAtilde})$k$ times, we obtain
\begin{equation}
\label{e.moserkj}
{\tilde g}^{i \jbar}\left( \frac{\partial^k \tilde u}{ \partial z^{i_1} \dots  
\partial z^{i_k}}\right)_{i \jbar}
=
\frac{\partial^k f_m}{\partial z^{i_1} \dots  
\partial z^{i_k}} e^{f_m} + O(||S||_m^m (x)).
\end{equation}

Our estimates on the decay of $|\nabla^k f_m|_{\tilde g}$ (given by 
Lemma~\ref{l.derivfm}) allow us to use again Moser Iteration 
to conclude that
\begin{equation}
\label{e.conclusionk}
|\nabla^k \tilde u|_{\tilde g} (x) \leq C ||S||^m_m(x), 
\end{equation}
completing the proof of the Proposition.
\end{proof}

\noindent {\bf Proof of Theorem~\ref{t.mainthm}:} It follows immediately
from the combination of
Propositions~\ref{p.uvanishesalot},  ~\ref{p.uvanishes} and 
\ref{p.derivum}.
\hfill \fbox{}

\medskip

\vskip 1cm

\flushleft

\medskip
{\bf Bianca Santoro} \ \  (bsantoro@msri.org)\\
Massachusetts Institute of Technology\\
Department of Mathematics \\
77 Massachusetts Avenue\\
Cambridge - MA - 02139\\
USA\\

\vspace{1cm}

Mathematical Sciences Research Institute\\
17 Gauss Way, room 217\\
Berkeley, CA  94720\\
USA\\

\end{document}